\documentclass[sn-mathphys,Numbered]{sn-jnl}

\usepackage{graphicx}%
\usepackage{multirow}%
\usepackage{amsmath,amssymb,amsfonts}%
\usepackage{amsthm}%
\usepackage{mathrsfs}%
\usepackage[title]{appendix}%
\usepackage{xcolor}%
\usepackage{textcomp}%
\usepackage{manyfoot}%
\usepackage{booktabs}%
\usepackage{algorithm}%
\usepackage{algorithmicx}%
\usepackage{algpseudocode}%
\usepackage{listings}%
\usepackage[normalem]{ulem}
\usepackage{enumitem}
\usepackage{comment}
\usepackage{tabularx}
\usepackage{mathtools}
\usepackage{diagbox}
\usepackage{rotating}

\newtheorem{theorem}{Theorem}
\newtheorem{lemma}[theorem]{Lemma}%

\newtheorem{definition}{Definition}%

\newcommand{\Abar}{\overline{A}}

\newcommand{\Bbar}{\overline{B}}

\newcommand{\Es}{\mathcal{E}}

\newcommand{\Hs}{\mathcal{H}}
\newcommand{\Ib}{\mathbf{I}}

\newcommand{\Lb}{\mathbf{L}}
\newcommand{\Ls}{\mathcal{L}}

\newcommand{\Mb}{\mathbf{M}}

\newcommand{\Qb}{\mathbf{Q}}

\newcommand{\R}{\mathbb{R}}

\newcommand{\T}{\mathrm{T}}

\newcommand{\ub}{\mathbf{u}}

\newcommand{\Us}{\mathcal{U}}
\newcommand{\Vs}{\mathcal{V}}
\newcommand{\vb}{\mathbf{v}}

\newcommand{\xb}{\mathbf{x}}
\newcommand{\Zbar}{\overline{Z}}

\newcommand{\0}{\mathbf{0}}
\newcommand{\1}{\mathbf{1}}
\renewcommand{\emptyset}{\varnothing}

\newcommand{\ra}{\rightarrow}
\newcommand{\tos}{\leadsto}
\newcommand{\lap}{\mbox{$\cal L$}}

\newcommand{\Xmp}{X(\cdot)}

\newcommand{\deltab}{\boldsymbol{\delta}}
\newcommand{\lambdab}{\boldsymbol{\lambda}}
\newcommand{\rhob}{\boldsymbol{\rho}}

\newcommand{\Pp}{\mathcal{V}_{\text{out}}}

\newcommand{\np}{\text{$N\!+\!1$}}

\DeclareMathOperator{\tr}{tr}

\begin{document}

\title[Article Title]{Algebraic formulas for first-passage times of Markov processes in the linear framework: generalising the work of Hill and Kac}


\author[1,2]{\fnm{Kee-Myoung} \sur{Nam}} \email{keemyoung.nam@yale.edu}

\author*[1,3]{\fnm{Jeremy} \sur{Gunawardena}} \email{jeremy.gunawardena@upf.edu}


\affil[1]{\orgdiv{Department of Systems Biology}, \orgname{Harvard Medical School}, \orgaddress{\street{200 Longwood Ave.}, \city{Boston}, \state{MA} \postcode{02115}, \country{USA}}}
\affil[2]{Current address: \orgdiv{Department of Molecular, Cellular and Developmental Biology}, \orgname{Yale University}, \orgaddress{\street{260 Whitney Ave.}, \city{New Haven}, \state{CT} \postcode{06511}, \country{USA}}}
\affil[3]{\orgdiv{Current address: Department of Medicine and Life Sciences}, \orgname{Pompeu Fabra University}, \orgaddress{\street{Dr. Aiguader 88}, \postcode{08003}, \city{Barcelona}, \country{Spain}}}




\abstract{In a preceding paper, we used the graph-theoretic linear framework to show how transient properties of continuous-time Markov processes---splitting probabilities and the moments of first-passage time (FPT) distributions---could be expressed as rational algebraic functions of the transition rates, by using spanning forests of the underlying graph. This contrasts with the related rational formulas for steady-state (s.s.) probabilities, which use only spanning trees. The biophysicist Terrell Hill sketched a procedure for calculating mean FPTs and splitting probabilities in terms of the s.s.~probabilities of a modified Markov process, thereby converting calculations using ensembles of trajectories to those using a single trajectory. Similarly, Mark Kac showed that the mean recurrence time to a state of a Markov process could be expressed in terms of the s.s.~probability of that state. Here, we explore further the relationships between transient and s.s.~properties, forests and trees, and ensemble and single-trajectory calculations. We formalise Hill's procedure by introducing a Hill operator, $\Hs_u[G]$, on a graph, $G$, and use it to calculate all moments of the conditional and unconditional FPTs from $u$ as rational functions of the s.s.~probabilities of $\Hs_u[G]$, which arise from trees and ``exchange factors'' which arise from forests. We then combine this with an unravelling operator, $\Us_v[G]$, to calculate all moments of the recurrence time distribution as rational functions of the s.s.~probabilities of $G$ and related exchange factors. Surprisingly, the Hill operator turns out to be a left-inverse to the unravelling operator, suggesting that the algebra of operators on linear framework graphs may be of broader interest. Our results integrate and generalise previously disparate findings into a common repertoire of rational algebraic formulas for FPTs of Markov processes.}

\keywords{labelled directed graphs; Laplacian matrix; rational algebraic functions; first-passage times; recurrence times; Matrix-Tree theorems}



\maketitle

\newpage
\section{Introduction}
\label{sec:intro}

Finite-state, continuous-time, time-homogeneous Markov processes---from now on, simply ``Markov processes''---are widely used in biology as mathematical models of stochastic behaviour. Historically, their steady-state (s.s.) probabilities were particularly exploited but, more recently, as experimental methods have become more powerful, measures of transient behaviour, such as first-passage times (FPTs), have been used to quantify functional behaviour in a variety of biological settings \cite{kou:05,kolomeisky:07,bel:10, moffitt:14,banerjee:17,ghusinga:17,lammers:20,ham:24,gqo25}.

In previous work, we developed the linear framework \cite{gunawardena:12,mirzaev:13}, which may be thought of, for present purposes, as a graph-theoretic approach to Markov processes. We have used the framework to study mechanisms of cellular information processing, such as post-translational modification \cite{nam:20,owen:23}, conformational allostery \cite{biddle:21}, gene regulation \cite{estrada:16,biddle:19,nasser:25} and cellular input-output functions \cite{wong:18b,martinez-corral:24}; for reviews and further background, see \cite{gunawardena:14,wong:20,nam:22,nam:23}. The framework focussed at first on steady-state properties. It showed, for instance, how the s.s.~probabilities of a Markov process could be expressed as rational algebraic functions of the transition rates. The \emph{Matrix-Tree theorem} of graph theory, versions of which go back to Kirchhoff's 19th century work on electrical networks \cite{kirchhoff:1847}, allows these rational functions to be constructed from the spanning trees of the linear framework graph associated with the Markov process. These algebraic formulas hold whether or not the Markov process relaxes to a s.s.~of thermodynamic equilibrium. If it does, the formulas correspond to the prescription of equilibrium statistical mechanics but, if not, they provide a context where Markov processes in which energy is being expended away from thermodynamic equilibrium admit exactly solvable s.s.~probabilities. These exact solutions have been particularly useful for analysing the impact of energy expenditure in biological information processing \cite{ahsendorf:14,estrada:16,wong:18a,wong:18b,martinez-corral:24}. The combination of graphs and exact solutions has made it possible to prove theorems about biological systems that rise above their ever-present molecular complexity \cite{wong:18b,biddle:21,martinez-corral:24}.

Recently, in a prequel to the present paper \cite{nam:25}, we showed that certain transient properties of Markov processes---splitting probabilities and moments of FPT distributions---could also be expressed as rational algebraic functions of the transition rates. This advance relies on a generalisation of the classical Matrix-Tree theorem, the \emph{All-Minors Matrix-Tree theorem} \cite{fiedler:58,chaiken:82,moon:94}, which uses spanning forests of the graph, in contrast to the spanning trees needed for s.s.~probabilities. These results were previewed in \cite{nam:23}, which also provides additional background, and they have already provided a foundation on which \cite{vus25, vus26} have recently built further.

An interesting feature of these rational algebraic functions is that they are \emph{manifestly integrally positive}: the constituent polynomials are sums of monomials in the transition rates with positive integer coefficients. Since the quantities in question are positive, as are the transition rates, the positivity of the functions is to be expected, but the \emph{manifest} positivity is an additional property that relates to broader mathematical concerns. The \emph{integral} nature of the manifest positivity reflects the fact that the polynomials are counting combinatorial objects, in this case spanning trees or forests of the graph \cite{lkw22}.

The purpose of the present paper is to integrate the formulas for transient properties described in our prequel paper \cite{nam:25} with historical approaches to calculating them by different methods. To describe these approaches, consider a Markov process with a source state, $u$, and a set of absorbing target states, $Z$, which all stochastic trajectories from $u$ eventually reach with probability one. Equivalently, we can consider these states to be source and target \emph{vertices} in the corresponding linear framework graph. We will describe the relationship between the Markov process and its graph more formally in the Results. 

In the first historical approach, the biophysicist Terrell Hill sketched a procedure for calculating the splitting probabilities, $\pi_{u,z}$, from $u$ to an individual target state, $z \in Z$, and the mean unconditional FPT from $u$ to any target state, $\mu^{(1)}_{u,Z}$ \cite{hill:88}. (These notations, and the others introduced below, are taken from \cite{nam:25} and will be used throughout the present paper.) Hill's procedure amounted to constructing a new Markov process whose s.s.~probabilities are then used to determine $\pi_{u,z}$ and $\mu^{(1)}_{u,Z}$. Hill's rationale seems to have been, broadly speaking, ``ergodic'': he sought to replace the standard definition of transient quantities in terms of an ensemble of finite stochastic trajectories, in the limit as the ensemble size goes to infinity, with a calculation based on a single stochastic trajectory, in the limit as the trajectory length goes to infinity. Hill gave no proofs of his procedure but worked out an example and ``waved his hands'' to suggest that it worked in general. His procedure did not extend to the mean conditional FPT to a single target state, $z \in Z$, conditioned on that state being reached, $\mu^{(1)}_{c,u,z}$; and he said nothing about the higher moments, $\mu^{(r)}_{u,Z}$ and $\mu^{(r)}_{c,u,z}$ for $r > 1$.

As usual for Hill's pioneering work, his insights were perfectly on the mark. Here, we rigorously prove and generalise his claims. We introduce a graph operator, $\Hs_u$, that takes a suitably defined graph, $G$, with source vertex $u$, to a modified graph, $\Hs_u[G]$, which we call the \emph{Hill construction} on $G$. $\Hs_u[G]$ provides a formal definition of the intuition underlying Hill's modified Markov process. We use the results of our prequel paper to show how the corresponding splitting probabilities, $\pi_{u,z}$, and all moments of the unconditional FPTs, $\mu^{(r)}_{u,Z}$, and of the conditional FPTs, $\mu^{(r)}_{c,u,z}$, of $G$---which is to say, of the corresponding Markov process---can be calculated in terms of the s.s.~probabilities of $\Hs_u[G]$, together with \emph{exchange factors} that describe the relationships between Hill constructions with different source vertices. Hill was not aware of these exchange factors, as they are not needed for the mean quantities that he studied. Interestingly, the exchange factors are given by spanning forests and trees of $\Hs_u[G]$, while the s.s.~probabilities, as noted above, are given solely by the spanning trees of $\Hs_u[G]$. 

In the second historical approach, the mathematician Mark Kac showed that the mean recurrence time to a state $u$ in a Markov process, $\tau^{(1)}_u$, could be expressed in terms of the reciprocal of the s.s.~probability of $u$ \cite{kac:47}. (Kac's original result was for discrete-time Markov chains but the extension to continuous-time Markov processes is well-known \cite{serfozo}.) Once again, in analogy to Hill's procedure, a transient quantity is expressed in terms of a s.s.~quantity. In the present paper, we generalise Kac's result to calculate all moments of the recurrence time distribution, $\tau^{(r)}_{u}$, in terms of s.s.~probabilities of the Markov process and the exchange factors described above. The proof relies on combining $\Hs_u$ with another graph operator, $\Us_v$, which we call the \emph{unravelling construction} with source vertex $v$. 

We were surprised to find that the Hill and unravelling constructions, despite being defined independently to calculate different quantities, are closely related to each other. The former is a left-inverse for the latter, so that $\Hs_v[\,\Us_v[G]] = G$ (Lemma~\ref{t-hug}). This unexpectedly elegant relationship suggests how the use of operators on linear framework graphs can clarify some of the underlying mathematical properties of Markov processes. The results presented here integrate our previous work with that of Hill and Kac and clarify the relationships between transient and s.s.~properties, forests and trees, and ensemble and single-trajectory calculations. 

In the first section of the Results that follow, we introduce the linear framework and the definitions and notation that we use throughout this paper. To keep the paper self-contained, we restate the necessary results from our prequel paper \cite{nam:25}. We then introduce the Hill and unravelling constructions and establish their basic properties, including the left-inverse relationship described above, using examples to clarify the details. Two key results for the Hill construction are then proved, the \emph{forest-tree lemma} (Lemma \ref{lem:hill}) and the \emph{exchange formula} (Lemma \ref{lem:exchange}). With these preliminaries in hand, we rigorously derive Hill's formula for the splitting probabilities, $\pi_{u,z}$, and establish new Hill-like formulas for all moments of the unconditional FPTs, $\mu^{(r)}_{u,Z}$, and conditional FPTs, $\mu^{(r)}_{c,u,z}$ (Theorems \ref{thm:hill-split}, \ref{thm:hill-moment-Z} and \ref{thm:hill-moment}). We then establish new Kac-like formulas for all moments of the recurrence time, $\tau^{(r)}_{u}$ (Theorem \ref{thm:kac-moment}). To demonstrate our results in practice, we then work through the calculation of the first and second moment of the recurrence time for the \emph{cycle graph} example in Fig.~\ref{fig:example}. A significant challenge in direct calculations is the combinatorial explosion in spanning trees and forests. We address this here using a recursive technique introduced by Chebotarev and Agaev in \cite{chebotarev:02}, which we have pointed out in previous papers but have not previously shown at work. This last section may therefore be of independent interest as a guide to undertaking calculations with graphs. Our Discussion expands on the themes introduced here. 

\section{Results}
\label{sec:results}

\subsection{Background and preliminaries}
\label{sec:background}

To keep the present paper self-contained, we bring together in this section the concepts, terminology and notation that we will need. Most of this material comes from our prequel paper \cite{nam:25}, which should be consulted for more details. A summary of the notation we use throughout the paper is given in Table \ref{table:notation} in Appendix \ref{app:notation}.

\subsubsection{Graphs, Laplacians and Markov processes}

A linear framework graph---from now on, simply a ``graph''---is a finite, simple, directed graph with labelled edges (Fig.~\ref{fig:intro}A, left). We will generally assume that graphs are \emph{connected}, so that they do not fall into separate pieces, and will make further specific assumptions about the \emph{strongly connected components}, as explained below. In the stochastic context considered in this paper, if $G$ is a graph, then the vertices, $\Vs(G)$, typically indexed by $i = 1, \dotsc, N$, represent states of a biomolecular system; the edges, $\Es(G) \subseteq \Vs(G) \times \Vs(G)$, denoted $i \ra j$, represent stochastic transitions between states; and the edge labels, denoted $\ell(i \ra j)$, are positive transition rates with units of (time)$^{-1}$. The edge labels can incorporate complex, possibly time-dependent, interactions between the system and its environment but, for the context considered here, we assume that the labels are constants; see \cite{nam:22} for a broader discussion and the reasoning behind these assumptions. 

\begin{figure}
\centering
\includegraphics[trim={1.3in 3.5in 1.3in 3.1in}, clip, width=0.9\textwidth]{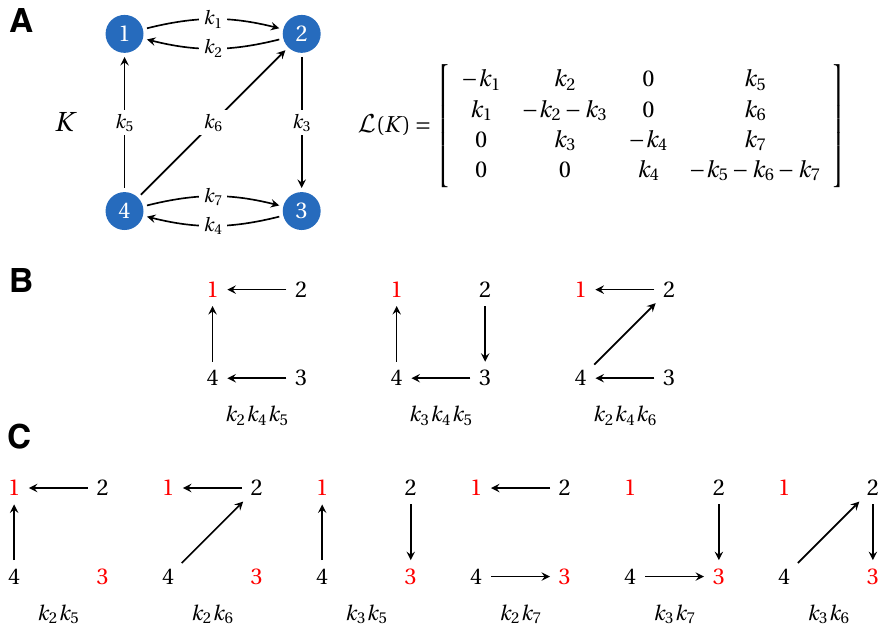}
\caption{Graphs, Laplacian matrices and spanning forests. (\textbf{A}) An example graph, $K$, on the left with its Laplacian matrix on the right. 
(\textbf{B}) All spanning trees in $\Phi_{\{1\}}(K)$, with the roots shown in red. Each tree is annotated below with its weight.
(\textbf{C}) All spanning forests in $\Phi_{\{1,3\}}(K)$, with the roots shown in red. Each forest is annotated below with its weight.}
\label{fig:intro}
\end{figure}

When it is necessary to make clear which graph is being discussed, we may use subscripts, as in $i \ra_G j$, or brackets, as in $\Vs(G)$, for disambiguation, but will otherwise keep the notation light and allow the context to clarify the meaning. 

A graph $G$ gives rise to a dynamics that is most easily described by imagining each edge to be a chemical reaction under mass-action kinetics with the edge label as the rate constant. Since each edge has only one source vertex, this dynamics is necessarily linear and can therefore be described by a matrix equation, 
\begin{equation}
    \frac{d\xb}{dt} = \Ls(G) \, \xb(t) \,,
\label{eq:master}
\end{equation}
where $\xb(t) = \left( x_1(t), \dotsc, x_N(t) \right)^\T$ is the $N \times 1$ column vector of the ``concentrations'' of the vertices at time $t$, and $\Ls(G)$ is the $N \times N$ \emph{Laplacian matrix} of $G$. The linear framework acquires its name from the linearity of Eqn.~\ref{eq:master} \cite{gunawardena:12}. It may seem puzzling that the framework can deal with nonlinear biochemistry; this capability arises through the labels and is reviewed further in \cite{gunawardena:14,nam:22}.

By working through the definition of $\Ls(G)$, it is not difficult to see by that its entries are given by,
\begin{equation}
    \Ls(G)_{i,j} = \begin{cases}
        \ell(j \to i) & \text{if $i \neq j$ and $j \to_G i$} \\
        0 & \text{if $i \neq j$ and $j \not\to_G i$} \\
        -\lambda_i & \text{if $i = j$} \,,
    \end{cases}
\label{eq:lap}
\end{equation}
where it is convenient to introduce the quantity $\lambda_i$ for the sum of all outgoing edge labels from $i$, 
\begin{equation}
    \lambda_i = \sum_{j \in \Vs(G) : i \to j}{\ell(i \to j)} \,.
\label{eq:lambda}
\end{equation}
The Laplacian matrix of the graph in Fig.~\ref{fig:intro}A is shown on the right of the panel. 

Laplacian matrices of undirected graphs, under appropriate scalings, may be considered as discrete approximations of the Laplacian operator \cite{chung}, so that Eqn.~\ref{eq:master} may be thought of as a directed analogue of a discretised diffusion equation over the graph $G$. It is evident that the dynamics must conserve the total concentration, so that Eqn.~\ref{eq:master} satisfies the conservation law, 
\begin{equation}
    x_1(t) + \dotsb + x_N(t) = x_{\text{tot}} \,.
\label{eq:conserve}
\end{equation}
Eqn.~\ref{eq:conserve} corresponds to the column sums of $\Ls(G)$ being zero, so that $\1^\T \Ls(G) = \0^\T$, where $\1$ and $\0$ are the all-ones and zero vectors of the appropriate dimension, respectively.

A graph $G$ defines the infinitesimal generator of a Markov process, denoted $\Xmp$, whose state space is the set of vertices, $\Vs(G)$, and whose transitions are given by the edges. If $i \ra j$, then the infinitesimal transition rate from $i$ to $j$ is given by the corresponding edge label,
\begin{equation}
    \lim_{h \to 0}{\frac{\Pr{\left[ X(t+h) = j \mid X(t) = i \right]}}{h}} = \ell(i \to j) \,.
\label{e-mpd}
\end{equation}
If there is no edge, $i \not\ra j$, then the infinitesimal transition rate is zero. The \emph{master equation}, or Kolmogorov forward equation, of $\Xmp$ is then given by the Laplacian dynamics in Eqn.~\ref{eq:master} \cite[Theorem~4]{mirzaev:13}, with the dynamical variable $x_i(t)$ being the probability that $\Xmp$ is in state $i$ at time $t$. In this case, the conservation law in Eqn.~\ref{eq:conserve} corresponds to the conservation of total probability, with $x_{\text{tot}} = 1$. We will make these assumptions from now on to reflect the stochastic context in which we are working. The linearity of Eqn.~\ref{eq:master} may be less puzzling when it is seen to be a master equation. 

It is not hard to show that any Markov process which has an infinitesimal generator, so that the limits in Eqn.~\ref{e-mpd} exist, gives rise to a corresponding graph and that these transformations are inverse to each other \cite[Theorem~4]{mirzaev:13}. It is in this sense that the linear framework, in the context considered here, offers a graph-theoretic approach to Markov processes. This viewpoint is rarely seen in Markov process theory but has earlier roots in biophysics, in the work, once again, of the inimitable Terrell Hill \cite{hill:66} and also subsequently of J{\"u}rgen Schnakenberg \cite{schnakenberg:76}. It then seems to have faded from view within biophysics until more recent times, when it resurfaced primarily within stochastic thermodynamics. References may be found in our prequel paper \cite{nam:25}.

\subsubsection{Steady-state and transient quantities}
\label{sec:ss-trans}

Given a graph $G$, the Laplacian dynamics in Eqn.~\ref{eq:master} always relaxes to a vector \cite[Theorem~2]{mirzaev:13}, 
\begin{equation}
    \xb^* = \lim_{t \ra \infty} \xb(t) \,,
\label{e-itl}
\end{equation}
which generally depends on the initial conditions, $\xb(0)$, as described by \cite[Theorem~3]{mirzaev:13}. It is clear that this vector must be a s.s. of the dynamics, so it follows from Eqn.~\ref{eq:master} that $\xb^*$ lies in the kernel of the Laplacian matrix, $\xb^* \in \ker\Ls(G)$. 

It will be helpful to understand for which vertices $i$ this s.s.~probability can be nonzero, $x_i^* \not= 0$. We will summarise what we need for the present paper but further details are provided in \cite{mirzaev:13, nam:25}. Given $i, j \in \Vs(G)$, we say that $i$ \emph{leads to} $j$, denoted $i \tos j$, if either $i = j$ or there is a directed path of edges, $i = i_1 \ra i_2 \ra \dotsb \ra i_k = j$, in $G$. We say that $i$ is \emph{strongly connected} to $j$ if $i \tos j$ and $j \tos i$. This defines an equivalence relation on the vertices, whose equivalence classes are the \emph{strongly connected components} (SCCs) of $G$. The ``leads to'' relation on vertices gives rise to a partial order on the SCCs, and the terminal elements in this partial order are the \emph{terminal SCCs} of $G$. A vertex $i$ can have nonzero s.s.~probability, so that $x^*_i \not= 0$ if, and only if, $i$ belongs to some terminal SCC. As the Laplacian dynamics in Eqn.~\ref{eq:master} evolves in time, the probability mass increasingly concentrates on the terminal SCCs, irrespective of the initial conditions.

An important special case arises when $G$ itself is strongly connected, as is the case for the example in Fig.~\ref{fig:intro}A. Strongly connected graphs have been frequently invoked in the literature \cite{owen:23,biddle:21,estrada:16,biddle:19,martinez-corral:24}. A more general situation, which will be important for us here, arises when $G$ has a single terminal SCC. The kernel of the Laplacian is then one-dimensional, $\dim\ker\Ls(G) = 1$ \cite{gunawardena:12}, so that there is a unique s.s.~probability distribution over the vertices. This s.s.~is independent of the initial conditions and occurs irrespective of the size and structure of $G$. In the strongly connected case, this collapse in the degrees of freedom at s.s.~has significant biochemical implications \cite{nam:22}.

We can also think about probabilities from the viewpoint of the Markov process, $\Xmp$, associated to the graph $G$. We will describe this informally, following the treatment in our prequel paper \cite{nam:25}, which gives further details and references. We can imagine generating an ensemble of stochastic trajectories of $\Xmp$, with the initial state $X(0)$ being repeatedly drawn randomly from the initial probability distribution $\xb(0)$. Some trajectories may endlessly cycle without making progress but these trajectories are non-generic and constitute a set of measure zero within the ensemble. The proportion of trajectories in the ensemble for which $X(t) = i$, in the limit of an infinite ensemble, is the probability $x_i(t)$ that appears in the master equation in Eqn.~\ref{eq:master}. The s.s.~probability $x^*_i$ arises in the infinite time limit (Eqn.~\ref{e-itl}). In the case where $G$ is strongly connected, so that $x^*_i$ is unique and does not depend on $\xb(0)$, the ergodic theorem for Markov processes \cite{norris} tells us that $x^*_i$ is also given by the asymptotic proportion of time that $\Xmp$ spends in state $i$ along any individual generic trajectory. 

The Markov process viewpoint is necessary to define the transient quantities mentioned in the Introduction, with which we will be concerned here. In the case of Hill's work, it is convenient to do this under the assumption that the terminal SCCs of the graph are all singletons. The unique vertices in each SCC are called \emph{terminal vertices} or \emph{absorbing states}. It follows immediately from the definitions given above that there are no outgoing edges from any terminal vertex. In this setting, s.s.~probability is concentrated on the terminal vertices and any generic trajectory will eventually reach some terminal vertex and remain there indefinitely. The prequel paper describes how a general graph can be transformed so that it satisfies the assumptions made here \cite{nam:25}. 

Hill's work concerned splitting probabilities and FPTs, as described in the Introduction. These can be defined more formally as follows. Given a graph $G$, let $Z \subseteq \Vs(G)$ denote the set of $T \geq 1$ terminal vertices, $Z = \{z_1, \dotsc, z_T\}$, so that $\{z_1\}, \dotsc, \{z_T\}$ form the terminal SCCs. Choose a source vertex, $u \in \Vs(G) \setminus Z$, and generate an ensemble of stochastic trajectories, as above, starting from $X(0) = u$. In the limit of an infinite ensemble, the proportion of trajectories that terminate at $z \in Z$ defines the \emph{splitting probability}, $\pi_{u,z}$. The time taken for a trajectory to first reach some terminal vertex in $Z$ is the \emph{unconditional FPT} from $u$ to $Z$ and, for $r \geq 1$, the $r$-th moment of the corresponding probability distribution is the quantity $\mu^{(r)}_{u,Z}$. Finally, if we take the sub-ensemble of trajectories that terminate at a particular target vertex, $z \in Z$, then the time taken to reach $z$ along these trajectories is the \emph{conditional FPT} from $u$ to the target vertex $z$, and the $r$-th moment of the corresponding probability distribution is the quantity $\mu^{(r)}_{c,u,z}$. 

The following two sections describe how s.s.~probabilities and the three types of transient quantities defined above can be calculated as rational algebraic functions of the edge labels \cite{nam:25}. 

For the work of Mark Kac, we will consider a strongly connected graph and any vertex, $u \in \Vs(G)$. If we generate an ensemble of stochastic trajectories starting from $X(0) = u$, then each generic trajectory will eventually return to $u$. The time taken for this first return is the \emph{recurrence time} to $u$ and the $r$-th moment of the corresponding probability distribution is the quantity $\tau^{(r)}_u$. We will also calculate these moments as rational algebraic functions of the edge labels using the ideas introduced below. 

\subsubsection{Spanning forests and Matrix-Tree theorems}
\label{sec:forests}

The determinant of a matrix is generally an alternating sum of monomials in the matrix entries. One of the remarkable properties of the Laplacian matrix, $\Ls(G)$, of the graph $G$ is that its \emph{minors}---the determinants of square submatrices obtained by removing rows and columns of $\Ls(G)$---have a combinatorial origin in the \emph{spanning forests} of $G$. It follows, furthermore, that certain minors---especially the principal minors, which arise from removing rows and columns with the same indices---exhibit extensive cancellations that yield manifestly integrally positive polynomials, with an accompanying sign. The classical Matrix-Tree theorem deals with the first minors, in which a single row and single column are removed from $\Ls(G)$, while the more recent All-Minors Matrix-Tree theorem, as its name suggests, deals with any minor. We do not need these theorems in full here---they are discussed further in \cite{nam:25}---but we will rely on some of their consequences, for which we need further concepts and notation that are described in this section. 

A subgraph, $F$, of $G$ is a \emph{spanning forest} of $G$ if it contains all the vertices of $G$, lacks cycles when edge directions are ignored, and contains at most one outgoing edge from each vertex. The subset of vertices in $F$ with no outgoing edges are the \emph{roots} of $F$. If there is only one root, then $F$ is a \emph{spanning tree}. If $A$ is a non-empty subset of vertices of $G$, $\emptyset \neq A \subseteq \Vs(G)$, then the set of spanning forests of $G$ rooted at $A$ is denoted $\Phi_A(G)$. Fig.~\ref{fig:intro}B shows the spanning trees in $\Phi_{\{1\}}(K)$ for the graph $K$ in Fig.~\ref{fig:intro}A, and Fig.~\ref{fig:intro}C shows the spanning forests in $\Phi_{\{1,3\}}(K)$. 

Given $F \in \Phi_A(G)$, it is not difficult to see that for any $i \in \Vs(G)$, there exists a unique root, $a \in A$, such that $i \tos_F a$. Let $\# S$ denote the size of the subset $S \subseteq \Vs(G)$. Given $B \subseteq \Vs(G)$ with $\# B = \# A$, let $\Phi_{B \ra A}(G) \subseteq \Phi_A(G)$ denote the set of those forests rooted at $A$ in which each vertex in $B$ leads to a distinct root in $A$. For example, if $F \in \Phi_{\{1,2\} \to \{3,4\}}(G)$, then either $1 \tos 3$ and $2 \tos 4$ in $F$, or $1 \tos 4$ and $2 \tos 3$ in $F$. On the other hand, if $F \in \Phi_{\{1,3\} \to \{3,4\}}(G)$, then it must be the case that $1 \tos 4$ in $F$, since, by definition, every vertex leads to itself, so that $3 \tos 3$. For the same reason, it follows that $\Phi_{A \to A}(G) = \Phi_A(G)$. Since every vertex leads to the root of a spanning tree, $\Phi_{\{i\} \ra \{j\}}(G) = \Phi_{\{j\}}(G)$ for any $i,j \in \Vs(G)$. 

Spanning forests and spanning trees are defined only in terms of the vertices and edges of the graph. They are independent of the edge labels, which enter the picture through the \emph{weight} function. Given a subgraph $F$ of a graph $G$, the weight of $F$, denoted $w(F)$, is the product of the edge labels over the edges of $F$, 
\[
    w(F) = \prod_{i \ra j \in \Es(F)} \ell(i \ra j) \,.
\]
$w(F)$ is a monomial in the edge labels. Figs.~\ref{fig:intro}B and \ref{fig:intro}C give the weights of the trees and forests shown there.

More generally, if $U$ is a set of subgraphs of $G$, then the weight of $U$, denoted $w(U)$, is the sum of the weights of the constituent subgraphs,
\[
    w(U) = \sum_{F \in U} w(F) \,.
\]
By the usual convention for empty products and empty sums, the weight of an edgeless subgraph is one, while the weight of an empty set of subgraphs is zero \cite{nam:25}. It is clear that $w(U)$ is a manifestly integrally positive polynomial in the edge labels. For example, it follows from Fig.~\ref{fig:intro}B that 
\[
    w(\Phi_{\{1\}}(K)) = k_2k_4k_5 + k_3k_4k_5 + k_2k_4k_6 \,,
\]
and from Fig.~\ref{fig:intro}C that
\[
    w(\Phi_{\{1,3\}}(K)) = k_2k_5 + k_2k_6 + k_3k_5 + k_2k_7 + k_3k_7 \,.
\]
Recall that the total degree of the monomial $(k_1)^{e_1} \cdots (k_m)^{e_m}$ is $e_1 + \cdots + e_m$. A spanning forest with $r$ roots of a graph with $N$ vertices has $N - r$ edges, so that the corresponding weight is a monomial of total degree $N-r$ in the edge labels. Hence, if $\# A = r$, then $w(\Phi_A(G))$ is a polynomial in which every monomial has the same total degree, $N - r$. 

Finally, we need notation for submatrices. Let $A \subseteq \Vs(G)$ be a vertex subset of size $k$, so that $\# A = k$, and let $\Abar$ denote its complement, $\Abar = \Vs(G) \setminus A$. Recall that $\#\Vs(G) = N$, so that $\#\Abar = N - k$. We denote the $k \times k$ submatrix of $\Ls(G)$ consisting of the rows and columns in $A$ by $\Ls(G)_{[A,A]}$. Accordingly, $\Ls(G)_{[\Abar,\Abar]}$ denotes the $(N-k) \times (N-k)$ submatrix in which the rows and columns in $A$ have been removed.

\subsubsection{Previous results}
\label{sec:prev}

Having introduced the notation that we need, we can now summarise the results from our previous work on which we will rely for the present paper.

The first result is a special case of the All-Minors Matrix-Tree theorem. Let $A$ be a non-empty, proper subset of vertices, $\emptyset \not= A \subsetneq \Vs(G)$, with $\# A = k$. Then the principal minor obtained by removing the rows and columns in $A$ is given by, 
\begin{equation}
    \det{\Ls(G)_{[\Abar,\Abar]}} = (-1)^{N - k} \, w(\Phi_A(G)) \,.
\label{eq:ammtt}
\end{equation}
Eqn.~\ref{eq:ammtt} follows directly from the All-Minors Matrix-Tree theorem as stated in \cite[Theorem~1]{nam:25}. Notice how the minor is a manifestly integrally positive polynomial in the edge labels, with an accompanying sign. When $k = 1$, Eqn.~\ref{eq:ammtt} corresponds to the classical Matrix-Tree theorem \cite{gunawardena:12,mirzaev:13}.

Now consider the case when $G$ has a single terminal SCC. As noted above, $\dim\ker\Ls(G) = 1$. Using the adjugate matrix of $\Ls(G)$, which is expressed in terms of the first minors of $\Ls(G)$, it is not difficult to determine a canonical basis vector, $\rhob \in \ker\Ls(G)$, whose entries are given by \cite{gunawardena:12}, 
\begin{equation}
    \rho_i = w(\Phi_{\{i\}}(G)) \,.
\label{eq:rho}
\end{equation}
It is easy to see that a vertex that is not in the terminal SCC cannot be the root of any spanning tree, while any vertex in the terminal SCC is the root of some spanning tree. It follows from Eqn.~\ref{eq:rho}, recalling the conventions mentioned above for the weight function, that $\rho_i \not= 0$ if, and only if, $i$ is in the terminal SCC. Since $\xb^* \in \ker\Ls(G)$, it follows by normalising to the total probability that the s.s.~probability of the corresponding Markov process is given by, 
\begin{equation}
    x^*_i = \frac{\rho_i}{\rho_1 + \dotsb + \rho_N} \,.
\label{eq:ss}
\end{equation}
As expected, $x^*_i > 0$ if, and only if, $i$ is in the terminal SCC. In this case, Eqn.~\ref{eq:rho} expresses $x^*_i$ as a manifestly integrally positive rational algebraic function of the edge labels \cite{gunawardena:12}. 

It follows from Eqn.~\ref{eq:ammtt} that the submatrix $\Ls(G)_{[\Abar,\Abar]}$ is invertible if, and only if, $\Phi_A(G)$ is non-empty. Furthermore, since an inverse matrix can be calculated in terms of minors through the adjugate matrix, it is not hard to see that an expression can be found for the inverse matrix, $\left(\Ls(G)_{[\Abar,\Abar]}\right)^{-1}$. However, stating this result requires some care because of the change of indexing that arises when passing to a submatrix. Given any finite set, $S = \{s_1, \dotsc, s_k\}$, whose indexing reflects the order of its elements, so that $s_1 < \dotsb < s_k$, define the ordering bijection, $\theta_S: \{ 1, \dotsc, k \} \to S$, by $\theta_S(i) = s_i$. Then, for any $1 \leq i, j \leq N-k$, 
\begin{equation}
    \left( \Ls(G)_{[\Abar,\Abar]} \right)^{-1}_{i,j}
    = -\frac{w(\Phi_{A \cup \{\theta_{\Abar}(j)\} \to A \cup \{\theta_{\Abar}(i)\}}(G))}{w(\Phi_A(G))} \,.
\label{eq:inv}
\end{equation}
Eqn.~\ref{eq:inv} is \cite[Proposition~2]{nam:25}. 

We can now move on to the transient quantities described above. Let $G$ be a graph whose terminal SCCs are singletons and let $Z \subseteq \Vs(G)$ be the set of terminal vertices. Choose $z \in Z$ and $u \in \Zbar$ such that $u \tos z$. Then the splitting probability from $u$ to $z$ is given by,
\begin{equation}
    \pi_{u,z}(G) = \frac{w(\Phi_{(Z \setminus \{z\}) \cup \{u\} \to Z}(G))}{w(\Phi_Z(G))} \,.
\label{eq:split}
\end{equation}
Eqn.~\ref{eq:split} is \cite[Theorem 4]{nam:25}. The $r$-th moment of the unconditional FPT from $u$ to any terminal vertex in $Z$ is given by, 
\begin{equation}
    \mu_{u,Z}^{(r)}(G) = r! \sum_{(i_1, \dotsc, i_r) \in \Zbar^r}{\left( \prod_{j=0}^{r-1}{\frac{w(\Phi_{Z \cup \{i_j\} \to Z \cup \{i_{j+1}\}}(G))}{w(\Phi_Z(G))}} \right)} \,,
\label{eq:moment-Z}
\end{equation}
where we have set $i_0 = u$. Eqn.~\ref{eq:moment-Z} is \cite[Theorem 6]{nam:25}. The $r$-th moment of the conditional FPT from $u$ to $z$ is given by, 
\begin{equation}
\begin{aligned}
    & \mu^{(r)}_{c,u,z}(G)  \\
    & \, = r! \sum_{(i_1, \dotsc, i_r) \in \Zbar^r}\left( \prod_{j=0}^{r-1}{\frac{w(\Phi_{Z \cup \{i_j\} \to Z \cup \{i_{j+1}\}}(G))}{w(\Phi_Z(G))}} \right) \left( \frac{w(\Phi_{(Z \setminus \{z\}) \cup \{i_r\} \to Z}(G))}{w(\Phi_{(Z \setminus \{z\}) \cup \{u\} \to Z}(G))} \right) \,.
\end{aligned}
\label{eq:moment}
\end{equation}
Here, as before, $i_0 = u$. Eqn.~\ref{eq:moment} is  \cite[Theorem 5]{nam:25}. We see from these results that all three transient quantities can be expressed as manifestly integrally positive rational algebraic functions of the edge labels. 

Finally, we shall also need a couple of additional elementary results. The first result shows how spanning forest weights of a graph $G$ may be built up recursively. Given $B \subseteq \Vs(G)$, $z \in B$ and $i \in \Bbar$, we have that,
\begin{equation}
    \sum_{j \in \Bbar : j \ra_G z}w(\Phi_{B \cup \{i\} \to B \cup \{j\}}(G)) \, \ell(j \to z) = w(\Phi_{(B \setminus \{z\}) \cup \{i\} \to B}(G)) \,.
\label{eq:span}
\end{equation}
Eqn.~\ref{eq:span} is \cite[Eqn.~24]{nam:25}. 

The second result is the matrix determinant lemma, which gives the determinant of a rank-one update of an invertible matrix \cite{harville}. Let $\Mb$ be an $n \times n$ invertible matrix and $\ub, \vb \in \R^n$ be $n \times 1$ column vectors. Then, 
\begin{equation}
    \det{\left( \Mb + \ub \vb^\T \right)} = \left( 1 + \vb^\T \Mb^{-1} \ub \right) \det{\Mb} \,.
\label{eq:det}
\end{equation}

With these preliminaries out of the way, we can now embark on the new results of the present paper. 

\subsection{Two graph operators}
\label{sec:transform}

As described in the Introduction, our reformulation and generalisation of the work of Hill and Kac rely on two new operators on graphs, which we define in this section. 

\subsubsection{The Hill operator}
\label{sec:hill-def}

Let $G$ be a graph with $T \geq 1$ singleton terminal SCCs, and let $Z$ be the set of terminal vertices. We assume that $N > T$, so that $\Zbar \neq \emptyset$ and there is some non-terminal vertex. Let $u \in \Zbar$ be any such vertex. The \emph{Hill construction} on $G$ with $u$ as source vertex, denoted $\Hs_u[G]$, which formalises the procedure outlined in \cite{hill:88}, is defined as follows. 

\vspace{0.5em}
\begin{definition}
The vertices of $\Hs_u[G]$ are the non-terminal vertices of $G$, $\Vs(\Hs_u[G]) = \Zbar$. The edges and labels of $\Hs_u[G]$ are acquired from those of $G$ through the following rules.
\begin{enumerate}[topsep=1em]
    \item If $j, k \in \Zbar$ are non-terminal vertices with $k \neq u$ ($j = u$ is allowed) and there is an edge $j \to_G k$, then there is an edge $j \to_{\Hs_u[G]} k$ with, 
    \[
        \ell(j \to_{\Hs_u[G]} k) = \ell(j \to_G k) \,.
    \]
    \item If $j \in \Zbar$ is a non-terminal vertex and there is an edge $j \ra_G u$, then there is an edge $j \to_{\Hs_u[G]} u$ with, 
    \[
        \ell(j \to_{\Hs_u[G]} u) = \ell(j \to_G u) + \sum_{j \ra_G z ,\, z \in Z} \ell(j \ra_G z) \,.
    \]
    \item If $j \in \Zbar$ is a non-terminal vertex other than $u$ ($j \neq u$) without an edge to $u$ ($j \not\to_G u$) but with at least one edge, $j \to_G z$, to some $z \in Z$, then there is an edge $j \ra_{\Hs_u[G]} u$ with, 
    \[
        \ell(j \ra_{\Hs_u[G]} u) = \sum_{j \ra_G z ,\, z \in Z} \ell(j \ra_G z) \,.
    \]
\end{enumerate}
There are no other edges in $\Hs_u[G]$. 
\label{def:hill}
\end{definition}
\vspace{0.5em}

Definition~\ref{def:hill} removes the terminal vertices in $Z$ and reroutes the edges entering $Z$ so that they enter the source vertex $u$ in $\Hs_u[G]$. Most of the other edges in $G$ remain in $\Hs_u[G]$ with the same labels (point 1). Two subtleties arise. First, a non-terminal vertex, $j \in \Zbar$, may have multiple edges entering $Z$, as well as, potentially, a pre-existing edge $j \ra_G u$. The labels on these edges must be summed to yield the label on the edge $j \ra u$ in $\Hs_u[G]$ (points 2 and 3). Second, the source vertex $u$ may also have one or more edges that enter $Z$ in $G$. These edges are ignored in $\Hs_u[G]$ but other edges leaving $u$ that do not enter $Z$ are preserved, along with their labels. We note that Hill did not discuss these subtleties in \cite{hill:88}.

Fig.~\ref{fig:hill} shows the Hill construction on the six-vertex graph $K$ on the left, with two terminal vertices, $2$ and $3$, where the source vertex is $1$. The graph $\Hs_1[K]$ on the right has four vertices. Note that $\Hs_1[K]$ inherits its vertex indexing from $K$. The label on the edge $4 \ra 1$ in $\Hs_1[K]$ illustrates the summation of the labels coming from the edge $4 \ra_K 1$ to the source vertex and from the edges $4 \ra_K 2$ and $4 \ra_K 3$ to the two terminal vertices (point 2). The edge $1 \ra_K 2$ disappears in $\Hs_1[K]$, along with its label $k_1$, but the edge $1 \ra_K 6$ remains in $\Hs_1[K]$ with its label $k_2$ (point 1). There are no edges $5 \ra 1$ and $6 \ra 1$ in $K$ but $\Hs_1[K]$ acquires these edges, by virtue of the edges $5 \ra 2$ and $6 \ra 3$ in $K$, and does so with the same labels, $k_7$ and $k_{10}$, respectively (point 3).

\begin{figure}
\centering
\includegraphics[trim={1.3in 4.7in 1.5in 4.3in}, clip, width=0.9\textwidth]{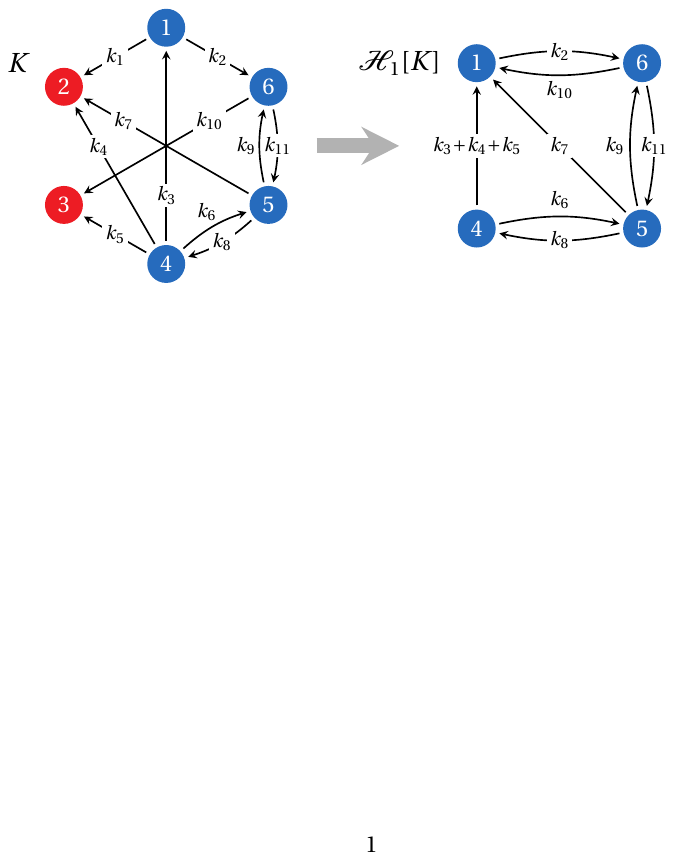}
\caption{The Hill construction. The graph $K$ on the left has source vertex $u = 1$ and two terminal vertices, $2$ and $3$, in red. The Hill construction on $K$ leads to the graph $\Hs_1[K]$ on the right, with $4$ vertices, which inherit their indexing from $K$. The edges and edge labels of $\Hs_1[K]$ illustrate the rules in Definition~\ref{def:hill}. See the text for more details.}
\label{fig:hill}
\end{figure}

It is easy to infer from Definition \ref{def:hill} the following two properties of $\Hs_u[G]$ that we will exploit below. Note that the properties are \emph{structural}, in that they do not concern the edge labels.

\vspace{0.5em}
\begin{lemma}
Under the conditions stated previously, if $i \in \Zbar$, then $i \tos_{\Hs_u[G]} u$, so that $\Hs_u[G]$ has one terminal SCC, which must include $u$.

\proof If $i = u$, then, by definition, $i \tos_{\Hs_u[G]} u$. If $i \not= u$, then $i \tos_G z$ for some $z \in Z$. By Definition~\ref{def:hill}.2 and 3, the last edge on any path from $i$ to $z$ in $G$ is rerouted to $u$ in $\Hs_u[G]$, so that $i \tos_{\Hs_u[G]} u$, as claimed. The final assertion follows in consequence. \hfill \qed
\label{lem:hill-scc}
\end{lemma}
\vspace{0.5em}

Recalling the discussion in Section \ref{sec:ss-trans}, Lemma \ref{lem:hill-scc} tells us that $\dim{\ker{\Ls(\Hs_u[G])}} = 1$, so that the s.s.~probabilities of the Markov process associated with $\Hs_u[G]$ can be obtained in terms of the canonical basis vector, $\rhob \in \ker\Ls(\Hs_u[G])$, as specified by Eqns.~\ref{eq:rho} and \ref{eq:ss}.

\vspace{0.5em}
\begin{lemma}
Under the conditions stated previously, if $k \in \Zbar$, then $u \tos_{\Hs_u[G]} k$ if, and only if, $u \tos_G k$.

\proof If $u = k$, the conclusion is trivial, so assume that $u \neq k$. If $u \tos_{\Hs_u[G]} k$, then there is a path, $u = j_1 \to \dotsb \to j_n = k$, of edges in $\Hs_u[G]$. We may assume that this path does not revisit $u$, so that $u \neq j_2, \dotsc, j_{n-1}$, for otherwise we can simply consider the portion of this path starting from the last visit to $u$. Since none of these edges now enter $u$, it follows from Definition~\ref{def:hill}.1 that they must have arisen from edges in $G$, so that $u \tos_G k$. Now suppose that $u \tos_G k$, so that there exists a directed path, $u = i_1 \to \dotsb \to i_m = k$, of edges in $G$. As before, we may assume that $u \neq i_2, \dotsc, i_m$, so that none of the edges enter $u$. Since $k$ is non-terminal, each vertex along the path must also be non-terminal. It then follows from Definition~\ref{def:hill}.1 that each edge must also exist in $\Hs_u[G]$, so that $u \tos_{\Hs_u[G]} k$.
\hfill \qed
\label{lem:hill-paths}
\end{lemma}
\vspace{0.5em}

At this point, we need to understand the relationship between $\Ls(G)$ and $\Ls(\Hs_u[G])$ that arises from the Hill construction. The former is an $N \times N$ matrix, while the latter is an $(N-T) \times (N-T)$ matrix. The vertex indices in $\Hs_u[G]$ are also derived from those in $G$ and therefore may not reflect the typical indexing, $1, \dotsc, N-T$. For example, the vertices of $\Hs_1[K]$ in Fig.~\ref{fig:hill} are $1, 4, 5, 6$. To move between the indices of $\Ls(\Hs_u[G])$ and the vertices of $\Hs_u[G]$, we have to use the ordering bijection, $\theta_{\Zbar}: \{1, \dotsc, N-T\} \ra \Zbar$, introduced previously. If we take a vertex $j \in \Zbar = \Vs(\Hs_u[G])$, then $1 \leq \theta^{-1}_{\Zbar}(j) \leq N-T$ is the corresponding row or column index in $\Ls(\Hs_u[G])$. Given $j,k \in \Zbar$ and interpreting the entries of the Laplacian matrix from Eqn.~\ref{eq:lap}, we see that, 
\[
    \Ls(\Hs_u[G])_{\theta_{\Zbar}^{-1}(k),\theta_{\Zbar}^{-1}(j)} = \begin{cases}
        \ell(j \to_{\Hs_u[G]} k) & \text{if $k \neq j$ and $j \to_{\Hs_u[G]} k$} \\
        0 & \text{if $k \neq j$ and $j \not\to_{\Hs_u[G]} k$} \\
        -\lambda_k(\Hs_u[G]) & \text{if $k = j$}\,.
    \end{cases}
\]
We can now use Definition~\ref{def:hill} to translate these terms, which are defined in $\Hs_u[G]$, into the corresponding terms defined in $G$. It is helpful to introduce a similar quantity to $\lambda_j$ in Eqn.~\ref{eq:lambda}, which arises from the labelling rules in Definition~\ref{def:hill}. Let $\lambda_{j,Z}$ be the sum of the outgoing edge labels from $j$ to $Z$ in $G$,
\begin{equation} 
    \lambda_{j,Z} = \sum_{j \ra_G z,\, z \in Z} \ell(j \ra z) \,.
\label{e-luz}
\end{equation}
If there are no edges from $j$ to $Z$, then, by the usual convention for empty sums, $\lambda_{j,Z} = 0$. It is worth noting that, unlike the quantity $\lambda_j$, which is defined for any graph, the quantity $\lambda_{j,Z}$ only makes sense for the graph $G$ with the set $Z$ of terminal vertices. 

Working through Definition \ref{def:hill}, we see that, 
\begin{equation}
\begin{split}
    \Ls(\Hs_u[G])_{\theta_{\Zbar}^{-1}(k),\theta_{\Zbar}^{-1}(j)}
    =
    \begin{cases}
        \ell(j \to_G k) & \text{if $k \not= j$, $k \neq u$, $j \to_G k$} \\
        0 & \text{if $k \not= j$, $k \neq u$, $j \not\to_G k$} \\
        \ell(j \to_G u) + \lambda_{j,Z} & \text{if $k \not=j$, $k = u$, $j \to_G u$} \\
        \lambda_{j,Z} & \text{if $k \not= j$, $k = u$, $j \not\to_G u$} \\
        -\lambda_u(G) + \lambda_{u,Z} & \text{if $k = j$, $k = u$} \\
        -\lambda_k(G) & \text{if $k = j$, $k \not= u$} \,.
    \end{cases}
\end{split}
\label{eq:lap-hill-off}
\end{equation}
Note how the penultimate case in Eqn.~\ref{eq:lap-hill-off}, for $k = j = u$,undertakes the compensation required by the loss in $\Hs_u[G]$ of the edges $u \ra_G z$ for $z \in Z$.

We can see from Eqn.~\ref{eq:lap-hill-off} that the entries of $\Ls(\Hs_u[G])$ are obtained from those of $\Ls(G)$ by the addition of terms $\lambda_{j,Z}$ to the row $\theta_{\Zbar}^{-1}(u)$, where $j$ varies with the column index. This suggests a way to construct $\Ls(\Hs_u[G])$ from $\Ls(G)$, as follows. Let us form the $(N-T) \times 1$ column vector, $\lambdab$, to carry the $\lambda_{j,Z}$ terms, keeping in mind that the ordering bijection is required to move between the non-terminal vertices and their indices, 
\begin{equation}
    \lambdab^\T = \left( \lambda_{\theta_{\Zbar}(1),Z}, \dotsc, \lambda_{\theta_{\Zbar}(N-T),Z} \right) \,;
\label{e-lmbt}
\end{equation}
and let us form the $(N-T) \times 1$ column vector, $\deltab_u$, with $1$ at index $\theta_{\Zbar}^{-1}(u)$ and $0$ elsewhere, 
\begin{equation}
    \deltab^\T_u = (0, \dotsc, 0, 1, 0, \dotsc, 0) \,,
\label{e-dltb}
\end{equation}
so as to single out the appropriate row; and, finally, let us remove the rows and columns of $\Ls(G)$ that come from $Z$, to see that, 
\begin{equation}
    \Ls(\Hs_u[G]) = \Ls(G)_{[\Zbar,\Zbar]} + \deltab_u \lambdab^\T \,.
\label{eq:lap-hill-rank1}
\end{equation}
The last term in Eqn.~\ref{eq:lap-hill-rank1} is the product of an $(N-T) \times 1$ matrix and a $1 \times (N-T)$ matrix, to give an $(N-T) \times (N-T)$ matrix with only a single nonzero row at index $\theta_{\Zbar}^{-1}(u)$. Eqn.~\ref{eq:lap-hill-rank1} therefore shows that $\Ls(\Hs_u[G])$ is a rank-one update of $\Ls(G)_{[\Zbar,\Zbar]}$.

Let us see how Eqn.~\ref{eq:lap-hill-rank1} works for the example in Fig.~\ref{fig:hill}. Here, $Z = \{2, 3\}$, $\Zbar = \{ 1, 4, 5, 6 \}$, and $u = 1$. As such, the ordering bijection is given by,
\[
    \theta_{\Zbar}(1) = 1, \qquad
    \theta_{\Zbar}(2) = 4, \qquad
    \theta_{\Zbar}(3) = 5, \qquad
    \theta_{\Zbar}(4) = 6.
\]
We can use Eqn.~\ref{eq:lap} to read off the Laplacian matrix from the graph $K$ in Fig.~\ref{fig:hill}, 
\[
    \Ls(K) = \left[ \begin{array}{cccccc}
        -k_1 - k_2 & 0 & 0 & k_3 & 0 & 0 \\
        k_1 & 0 & 0 & k_4 & k_7 & 0 \\
        0 & 0 & 0 & k_5 & 0 & k_{10} \\
        0 & 0 & 0 & -k_3 - k_4 - k_5 - k_6 & k_8 & 0 \\
        0 & 0 & 0 & k_6 & -k_7 - k_8 - k_9 & k_{11} \\
        k_2 & 0 & 0 & 0 & k_9 & -k_{10} - k_{11}
    \end{array} \right] \,.
\]
If we remove the rows and columns corresponding to the terminal vertices, this leaves the $4 \times 4$ matrix,
\[
    \Ls(K)_{[\Zbar,\Zbar]} = 
    \left[ \begin{array}{cccc}
        -k_1 - k_2 & k_3 & 0 & 0 \\
        0 & -k_3 - k_4 - k_5 - k_6 & k_8 & 0 \\
        0 & k_6 & -k_7 - k_8 - k_9 & k_{11} \\
        k_2 & 0 & k_9 & -k_{10} - k_{11}
    \end{array} \right] \,.
\]
Meanwhile, the vectors $\lambdab$ and $\deltab_u = \deltab_1$ are given by 
\[
    \lambdab = \left[ \begin{array}{c}
        k_1 \\
        k_4 + k_5 \\
        k_7 \\
        k_{10}
    \end{array} \right]
    \qquad \text{and} \qquad
    \deltab_1 = \left[ \begin{array}{c}
        1 \\
        0 \\
        0 \\
        0
    \end{array} \right].
\]
Therefore, the corresponding rank-one update is the $4 
\times 4$ matrix,
\[
\deltab_1 \lambdab^\T = \left[ \begin{array}{cccc}
        k_1 & k_4 + k_5 & k_7 & k_{10} \\
        0 & 0 & 0 & 0 \\
        0 & 0 & 0 & 0 \\
        0 & 0 & 0 & 0
    \end{array} \right] \,.
\]
If we add this to the submatrix $\Ls(K)_{[\Zbar,\Zbar]}$, following the prescription in Eqn.~\ref{eq:lap-hill-rank1}, we get the $4 \times 4$ matrix, 
\[
    \left[ \begin{array}{cccc}
        -k_2 & k_3 + k_4 + k_5 & k_7 & k_{10} \\
        0 & -k_3 - k_4 - k_5 - k_6 & k_8 & 0 \\
        0 & k_6 & -k_7 - k_8 - k_9 & k_{11} \\
        k_2 & 0 & k_9 & -k_{10} - k_{11}
    \end{array} \right] \,,
\]
which is precisely $\Ls(\Hs_1[K])$, as is easy to see by using Eqn.~\ref{eq:lap} on the graph $\Hs_1[K]$ in Fig.~\ref{fig:hill}.

The Hill operator that we have defined here differs in one detail from the construction that Hill originally sketched in \cite{hill:88}. Hill kept track of each ``terminal'' edge, $j \to_G z$, where $j \in \Zbar$ and $z \in Z$. Had we done the same, we would have needed a directed multigraph, in which there may be more than one directed edge from one vertex to another. Multigraphs are widely used but they are not a natural setting for the kinds of biochemical networks that inspired the linear framework \cite{gunawardena:12}, which relies on graphs. We have therefore adapted Hill's construction for this setting by, in effect, combining his parallel edges into a single edge, whose label is the sum of the labels of the individual edges, as specified in Definition~\ref{def:hill}.

\subsubsection{The unravelling operator}
\label{sec:unravel}

We now come to the unravelling construction. Unlike the Hill construction, which starts from a graph with terminal vertices and reduces its size by removing the terminal vertices, the unravelling construction starts from a strongly connected graph and increases its size by introducing a new terminal vertex. 
\vspace{0.5em}
\begin{definition}
Let $G$ be a strongly connected graph, and let $v \in \Vs(G)$. Then the unravelling of $G$ at vertex $v$, denoted $\Us_v[G]$, has vertices $\Vs(\Us_v[G]) = \Vs(G) \cup \{v^+\}$. The edges and labels of $\Us_v[G]$ are acquired from those of $G$ through the following rules. Choose $j, k \in \Vs(G)$. 
\begin{enumerate}
    \item If $j \to_G k$ with $k \neq v$, then $j \to_{\Us_v[G]} k$ and $\ell(j \to_{\Us_v[G]} k) = \ell(j \to_G k)$.
    \item If $j \to_G v$, then $j \to_{\Us_v[G]} v^+$ and $\ell(j \to_{\Us_v[G]} v^+) = \ell(j \to_G v)$.
\end{enumerate}
There are no other edges in $\Us_v[G]$. 
\label{def:unravel}
\end{definition}
\vspace{0.5em}

\begin{figure}
\centering
\includegraphics[trim={1.3in 4.7in 1.5in 4.3in}, clip, width=0.9\textwidth]{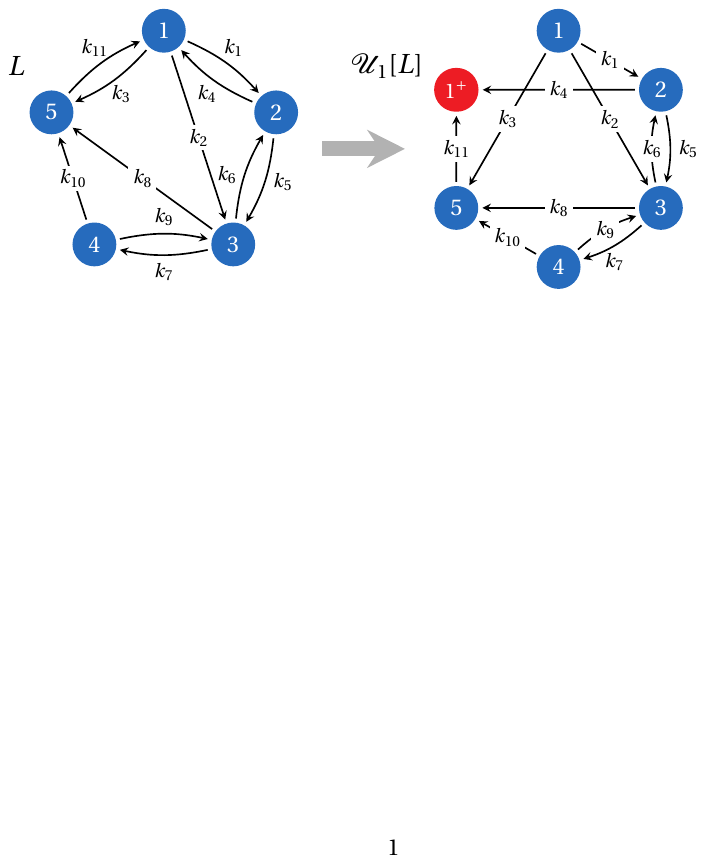}
\caption{The unravelling construction. The strongly connected graph $L$ on the left is unravelled at the vertex $v = 1$ to yield the graph, $\Us_1[L]$, on the right, with the new vertex $1^+$ in red. A stochastic trajectory starting from $1$ in $\Us_1[L]$ always reaches $1^+$ and remains there, mimicking a trajectory from $1$ in $L$ that returns to $1$ for the first time. See the text for more details.}
\label{fig:unravel}
\end{figure}

Definition~\ref{def:unravel} reroutes the edges entering $v$ in $G$ so that they now enter $v^+$ in $\Us_v[G]$ with the same label. It is clear that $v$ cannot be reached in $\Us_v[G]$ from any other vertex, so that $\{v\}$ is a SCC that is initial in the partial order. It is also clear that no edges leave $v^+$, so that $\{v^+\}$ is a terminal SCC. Moreover, because $G$ is strongly connected, if $k \in \Vs(G) \setminus \{v\}$, then $v \tos_{\Us_v[G]} k$ and $k \tos_{\Us_v[G]} v^+$. It follows that $\{v^+\}$ is the unique terminal SCC of $\Us_v[G]$. Note that, in general, it may not be the case that the vertices $\Vs(G) \setminus \{v\}$ form a single SCC in $\Us_v[G]$.

Now, let us compare stochastic trajectories of the Markov processes on $G$ and on $\Us_v[G]$, each starting at the vertex $v$. It is not difficult to see, using similar reasoning to that in \cite{nam:25}, that the distribution of the recurrence time to $v$ in $G$ is identical to that of the FPT from $v$ to $v^+$ in $\Us_v[G]$. It follows that the $r$-th moments are equal, 
\begin{equation}
    \tau_v^{(r)}(G) = \mu_{v,v^+}^{(r)}(\Us_v[G]) \,, 
\label{eq:recur-unravel}
\end{equation}
so that we can use the results on FPTs from our prequel paper \cite{nam:25}, as summarised above, to calculate recurrence time moments.

It is also not difficult to see how unravelling affects the Laplacian. If $G$ is indexed in the usual way, so that $\Vs(G) = \{1, \dotsc, N\}$, then we can assign the index $\np$ to the new vertex $v^+$. To form the $(\np) \times (\np)$ matrix $\Ls(\Us_v[G])$, following Definition~\ref{def:unravel}, we enlarge the $N \times N$ matrix $\Ls(G)$ by adjoining a new row with index $\np$ and a new column with index $\np$, both with zero entries, and then interchange the off-diagonal elements of row $v$ with those of row $\np$. We can see this at work in the example graph $L$ in Fig.~\ref{fig:unravel}. The Laplacian matrix, $\Ls(L)$, can be read off from Eqn.~\ref{eq:lap} to be, 
\begin{equation}
    \left[ \begin{array}{ccccc}
        -(k_1+k_2+k_3) & k_4   & 0   & 0   & k_{11} \\
        k_1 & -(k_4+k_5) & k_6 & 0 & 0 \\
        k_2 & k_5 & -(k_6+k_7+k_8) & k_9 & 0 \\
        0   & 0   & k_7 & -(k_9+k_{10})  & 0 \\
        k_3 & 0   & k_8 & k_{10} & -k_{11} \\ 
    \end{array} \right] \,.
\label{e-ll}
\end{equation}
If we follow the procedure described above (with $v = 1$), we get the matrix,
\begin{equation}
    \left[ \begin{array}{cccccc}
        -(k_1+k_2+k_3) & 0   & 0   & 0   & 0  & 0 \\
        k_1 & -(k_4+k_5) & k_6 & 0 & 0   & 0 \\
        k_2 & k_5 & -(k_6+k_7+k_8) & k_9 & 0  & 0 \\
        0   & 0   & k_7 & -(k_9+k_{10})  & 0  & 0 \\
        k_3 & 0   & k_8 & k_{10} & -k_{11} & 0 \\
        0   & k_4 & 0   & 0  & k_{11} & 0 \\ 
    \end{array} \right] \,.
\label{e-lusl}
\end{equation}
Reading off from Eqn.~\ref{eq:lap}, this is exactly $\Ls(\Us_1[L])$. 

We noted above that $\{v^+\}$ is the unique terminal SCC of $\Us_v[G]$ and is also a singleton. Accordingly, $\Us_v[G]$ satisfies the conditions required above for the Hill construction and is in the domain of the Hill operator. What happens if we compose the Hill operator with the unravelling operator? If $G$ is indexed in the usual way, so that $\Vs(G) = \{1, \dotsc, N\}$, and the new terminal vertex in $\Us_v[G]$ is $v^+ \equiv \np$, then $Z = \{ \np \}$, $\Zbar = \{1, \dotsc, N\}$ and the ordering bijection, $\theta_{\Zbar}$, is the identity. Here, the bar notation in $\overline{Z}$ denotes the set complement in $\Vs(\Us_v[G])$. We can compute the Laplacian of $\Hs_v[\,\Us_v[G]]$ using the rank-one update formula in Eqn.~\ref{eq:lap-hill-rank1}, with $u = v$. The vectors $\deltab_v$ and $\lambdab$ can be determined as follows. Eqn.~\ref{e-luz} tells us that if $j \ra_G v$, so that $j \ra_{\Us_v[G]} \np$, then 
\[
    \lambda_{j,Z} = \ell(j \ra_{\Us_v[G]} \np) = \ell(j \ra_G v) \,,
\]
while $\lambda_{j,Z} = 0$ if $j \not\ra_G v$. Hence, by virtue of Eqn.~\ref{eq:lap}, the vector $\lambdab$
is simply the $v$-th row of $\Ls(G)$ but with $0$ at index $v$ in place of the usual negative diagonal element of $\Ls(G)$. As for $\deltab_v$, it has $1$ at index $v$ and $0$ elsewhere. 

Now, consider what happens when we first unravel $G$ from $v$, and change $\Ls(G)$ as described above to form $\Ls(\Us_v[G])$, and then undertake the rank-one update on $\Ls(\Us_v[G])$, as described in Eqn.~\ref{eq:lap-hill-rank1} to form $\Ls(\Hs_v[\, \Us_v[G]])$. We first remove the $(\np)$-th row and column of $\Ls(\Us_v[G])$, the former of which contained the off-diagonal elements of row $v$ of $\Ls(G)$ that were interchanged into row $\np$ when forming $\Ls(\Us_v[G])$. The rank-one update then adds back these entries into row $v$ of $\Ls(\Hs_v[\, \Us_v[G]])$, omitting the $(\np)$-th entry, which is zero. We therefore recover $\Ls(G)$,
\[
    \Ls(\Hs_v[\, \Us_v[G]]) = \Ls(G) \,.
\]
Since the Laplacian matrix defines the graph, we see that the Hill operator is a left-inverse of the unravelling operator.

\vspace{0.5em}
\begin{lemma}
If $G$ is a strongly connected graph and $v \in \Vs(G)$, then $\Hs_v[\, \Us_v[G]] = G$. 
\label{t-hug}
\end{lemma}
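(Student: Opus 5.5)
I will prove the lemma directly from Definitions~\ref{def:unravel} and \ref{def:hill}, checking that $\Hs_v[\,\Us_v[G]]$ and $G$ have the same vertex set, the same edges and the same labels. The Laplacian computation given just before the statement is a second route: once $\Ls(\Hs_v[\,\Us_v[G]]) = \Ls(G)$ is established entry by entry, Eqn.~\ref{eq:lap} recovers the graph, because the off-diagonal entries determine the edges and labels. The only bookkeeping needed is to confirm that the ordering bijection $\theta_{\Zbar}$ is the identity, so that the indices of $\Ls(\Hs_v[\,\Us_v[G]])$ and $\Ls(G)$ match.

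First, set up the hypotheses of the Hill construction. In $\Us_v[G]$, the singleton $\{v^+\}$ is the unique terminal SCC, as argued after Definition~\ref{def:unravel}. So $Z = \{v^+\}$, and $\Zbar = \Vs(G)$ is non-empty and contains $v$. It follows that $\Hs_v[\,\Us_v[G]]$ is defined and has vertex set $\Vs(G)$.

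Next, compare edges, splitting on whether the target is $v$.
\begin{itemize}
\item Take $j,k \in \Vs(G)$ with $k \neq v$. By Definition~\ref{def:hill}.1, $j \to k$ in $\Hs_v[\,\Us_v[G]]$ exactly when $j \to_{\Us_v[G]} k$, with the same label. By Definition~\ref{def:unravel}.1, this holds exactly when $j \to_G k$, with label $\ell(j \to_G k)$.
\item Take edges into $v$ with $j \neq v$. In $\Us_v[G]$ no edge enters $v$, so Definition~\ref{def:hill}.2 never applies. Rule 3 gives $j \to v$ exactly when $j \to_{\Us_v[G]} v^+$, with label $\lambda_{j,Z} = \ell(j \to_{\Us_v[G]} v^+)$. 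By Definition~\ref{def:unravel}.2, this is exactly when $j \to_G v$, with label $\ell(j \to_G v)$.
\end{itemize}
Loops $v \to v$ do not arise, since the graphs are simple. Any edge $v \to_{\Us_v[G]} v^+$ is discarded by the Hill construction, which is consistent, because it would correspond to a loop in $G$. No other edges are created by either definition, so the two graphs coincide.

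I expect no real obstacle here. The step most in need of care is the treatment of edges entering the source vertex. One must note that $\Us_v[G]$ has no edges into $v$, so the summed label in rule 2 collapses to the single rerouted label in rule 3. One must also note that the Hill construction's discarding of edges from $u = v$ into $Z$ loses nothing.
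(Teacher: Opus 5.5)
Your proof is correct, but your primary argument is not the paper's. The paper's proof is precisely the Laplacian computation you mention as a second route. Unravelling moves the off-diagonal entries of row $v$ of $\Ls(G)$ into a new row $N+1$. The rank-one update of Eqn.~\ref{eq:lap-hill-rank1} then deletes row and column $N+1$ and restores those entries to row $v$; here $\deltab_v$ selects row $v$, and $\lambdab$ is row $v$ of $\Ls(G)$ with its diagonal entry replaced by $0$. This gives $\Ls(\Hs_v[\,\Us_v[G]]) = \Ls(G)$, and the graphs agree because the Laplacian determines the graph.

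You instead check Definitions~\ref{def:unravel} and \ref{def:hill} clause by clause. This is more elementary and self-contained. It does not rely on Eqn.~\ref{eq:lap-hill-rank1}, which the paper itself obtained by working through Definition~\ref{def:hill}. It needs no index bookkeeping and never touches diagonal entries. It also shows exactly which Hill rules act: rule~1 on edges not entering $v$, rule~3 on the rerouted edges, and rule~2 vacuously, since nothing enters $v$ in $\Us_v[G]$. And it explains why discarding edges from the source into $Z$ costs nothing here. The paper's version is more compact, and it phrases the left-inverse property in the language of rank-one updates that drives the forest-tree lemma and the exchange formula.

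One caveat applies to both arguments. The claim that $\{v^+\}$ is the unique terminal SCC of $\Us_v[G]$ needs $\#\Vs(G) \geq 2$. For a one-vertex $G$, $v$ is isolated in $\Us_v[G]$ and is itself terminal, so $\Hs_v$ is not defined.
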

\vspace{0.5em}

We can see Lemma~\ref{t-hug} at work on the graph $L$ in Fig.~\ref{fig:unravel}, with $v = 1$. The rank-one update for $\Ls(\Us_1[L])$, shown in Eqn.~\ref{e-lusl}, can be easily calculated from Eqns.~\ref{e-lmbt} and \ref{e-dltb} to be,
\[
\deltab_1 \lambdab^\T = \left[ \begin{array}{ccccc}
        0 & k_4 & 0 & 0 & k_{11} \\
        0 & 0 & 0 & 0 & 0 \\
        0 & 0 & 0 & 0 & 0 \\
        0 & 0 & 0 & 0 & 0 \\
    \end{array} \right] \,.
\]
Adding this matrix to the submatrix of $\Ls(\Us_1[L])$ that is defined by the vertices in $\Zbar = \{1, \dotsc, 5\}$ recovers $\Ls(L)$, shown in Eqn.~\ref{e-ll}, as expected from Lemma~\ref{t-hug}. 

\subsection{The forest-tree lemma}
\label{sec:hill-lemma}

We need two further preliminary results for the Hill operator, of which the first relates the weight of spanning trees in $\Hs_u[G]$ to the weight of a corresponding collection of spanning forests in $G$. We call it the \emph{forest-tree lemma} in consequence. Let $G$ be a graph whose terminal SCCs are singletons. Let $\emptyset \neq Z \subsetneq \Vs(G)$ be the set of terminal vertices, with $T = \# Z$, and let $u \in \Zbar = \Vs(G) \setminus Z$ be a source vertex.

\vspace{0.5em}
\begin{lemma}[Forest-tree lemma]
Given $j \in \Zbar$, we have 
\[
    w(\Phi_{\{j\}}(\Hs_u[G])) = w(\Phi_{Z \cup \{u\} \to Z \cup \{j\}}(G)) \,.
\]
\label{lem:hill}
\end{lemma}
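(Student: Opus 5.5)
Two routes suggest themselves: a bijective one and an algebraic one. The bijective route sends a spanning tree of $\Hs_u[G]$ rooted at $j$ to a spanning forest of $G$ rooted at $Z \cup \{u\}$. The root $j$ stays a root. The unique outgoing edge of each other vertex $i$ is either an edge inherited from $G$ (Definition~\ref{def:hill}.1), which we keep, or an edge $i \ra u$. In the second case its label $\ell(i \ra_G u) + \lambda_{i,Z}$ splits into a sum of monomials, one for each choice of $G$-edge $i \ra u$ or $i \ra z$ with $z \in Z$. Expanding the product of labels therefore gives a sum over subgraphs of $G$ in which each vertex of $\Zbar \setminus \{j\}$ has exactly one outgoing edge. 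Since $u$ and the vertices of $Z$ have no outgoing edges in the subgraph (for $u \neq j$), the roots are $Z \cup \{j\}$. The difficulty lies in the bookkeeping: acyclicity and the condition $u \tos Z \cup \{j\}$ with distinct roots (so that $u \tos j$, since $j \tos j$) have to be matched against the tree condition in $\Hs_u[G]$, and the case $j = u$ needs separate treatment. This is doable but fiddly, so I would prefer an algebraic proof using results already stated.

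The algebraic route uses the matrix-tree theorem, applied to the rank-one formula in Eqn.~\ref{eq:lap-hill-rank1}. Set $\Mb = \Ls(G)_{[\Zbar,\Zbar]}$ and $\Lb = \Ls(\Hs_u[G]) = \Mb + \deltab_u\lambdab^\T$. By Eqn.~\ref{eq:ammtt}, applied to $\Hs_u[G]$ with $A = \{j\}$,
\[
w(\Phi_{\{j\}}(\Hs_u[G])) = (-1)^{N-T-1}\det \Lb_{[\overline{\{j\}},\overline{\{j\}}]} ,
\]
where the complement is taken in $\Zbar$.
\begin{itemize}
\item If $j = u$, deleting row and column $u$ kills the rank-one term. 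The minor is then $\det \Ls(G)_{[\overline{Z\cup\{u\}},\overline{Z\cup\{u\}}]} = (-1)^{N-T-1} w(\Phi_{Z\cup\{u\}}(G))$, again by Eqn.~\ref{eq:ammtt}. This equals the right-hand side because $\Phi_{A\to A} = \Phi_A$.
\item If $j \neq u$, let $\Mb'$ be $\Mb$ with row and column $j$ deleted, that is, $\Mb' = \Ls(G)_{[\overline{Z\cup\{j\}},\overline{Z\cup\{j\}}]}$. The reduced matrix is $\Mb' + \deltab'_u\lambdab'^\T$, with the primed vectors restricted. Assuming $\Phi_{Z\cup\{j\}}(G) \neq \emptyset$, so that $\Mb'$ is invertible, Eqn.~\ref{eq:det} gives
\[
\det(\Mb' + \deltab'_u\lambdab'^\T) = \bigl(1 + \lambdab'^\T \Mb'^{-1}\deltab'_u\bigr)\det\Mb' .
\]
\end{itemize}

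The main step is to evaluate the factor $1+\lambdab'^\T\Mb'^{-1}\deltab'_u$. Eqn.~\ref{eq:inv} with $A = Z\cup\{j\}$ gives
\[
(\Mb'^{-1})_{k,u} = -\frac{w(\Phi_{Z\cup\{j,u\}\to Z\cup\{j,k\}}(G))}{w(\Phi_{Z\cup\{j\}}(G))}
\]
in vertex terms. Write $\lambda_{k,Z} = \sum_{z\in Z}\ell(k\ra z)$ and swap the sums, so that for each fixed $z \in Z$ we can apply the recursion Eqn.~\ref{eq:span} with $B = Z\cup\{j\}$, $i = u$:
\[
\sum_{k} w(\Phi_{B\cup\{u\}\to B\cup\{k\}})\,\ell(k\ra z) = w(\Phi_{(B\setminus\{z\})\cup\{u\}\to B}) .
\]
Summing over $z$ yields $\sum_{z\in Z} w(\Phi_{(Z\setminus\{z\})\cup\{j,u\}\to Z\cup\{j\}}(G))$, which is the weight of the forests rooted at $Z\cup\{j\}$ in which $u$ leads to some $z\in Z$. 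Subtracting this from $w(\Phi_{Z\cup\{j\}}(G))$ leaves exactly the forests in which $u \tos j$. Such a forest, with the edge out of $u$ deleted, is a forest rooted at $Z\cup\{u\}$ in which $u \tos j$; this is the reverse of "$u$ leads to $j$ in the larger-rooted forest". Explicitly, applying Eqn.~\ref{eq:span} once more with $B = Z\cup\{u\}$, $z = u$, $i = j$ identifies
\[
w(\Phi_{Z\cup\{j\}}(G)) - \sum_{z} (\cdots) = \sum_k w(\Phi_{Z\cup\{u,j\}\to Z\cup\{u,k\}})\,\ell(k\ra u) = w(\Phi_{Z\cup\{j\}\to Z\cup\{u\}}(G)) ,
\]
because $\Phi_{Z\cup\{j\} \to Z\cup\{u\}} = \Phi_{Z\cup\{u\}\to Z\cup\{j\}}$ as sets: both require the non-$Z$ start vertex to reach the non-$Z$ root. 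I expect this matching of the two sums to be the most delicate step. I would handle it by the combinatorial argument of cutting the unique edge leaving $u$, using $\lambda_u = \lambda_{u,Z}+\sum_k \ell(u\ra k)$, rather than by pure algebra.

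Multiplying by $\det\Mb' = (-1)^{N-T-1}w(\Phi_{Z\cup\{j\}}(G))$ then cancels the denominator and gives the claim, with the signs matching. Finally, the degenerate case $\Phi_{Z\cup\{j\}}(G) = \emptyset$ remains. Both sides are polynomial identities in the labels, and the previous argument holds on the Zariski-open set where $\det\Mb' \neq 0$ whenever that determinant is a nonzero polynomial. If the determinant is identically zero, then no forest rooted at $Z\cup\{j\}$ exists, so no forest rooted at $Z\cup\{u\}$ with $u \tos j$ exists either, and the direct combinatorial check gives zero on both sides.
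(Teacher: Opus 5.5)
Your algebraic route is the paper's proof up to and including the complementary-counting step. The paper also kills the rank-one term when $j=u$. For $j\neq u$ it applies the matrix determinant lemma to the $[Y,Y]$ block of Eqn.~\ref{eq:lap-hill-rank1}, using $-\Ls^\T$ and $u=1$, which is only bookkeeping. It then evaluates the inverse entries with Eqn.~\ref{eq:inv}, swaps the sums, applies Eqn.~\ref{eq:span} with $B=Z\cup\{j\}$, $i=u$, and subtracts.

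The problem is the ``matching'' step you add afterwards: it rests on a misreading of the notation and is false. By definition $\Phi_{B\to A}(G)\subseteq\Phi_A(G)$, so the \emph{second} index set is the root set. Thus $\Phi_{Z\cup\{u\}\to Z\cup\{j\}}(G)$ is the set of forests rooted at $Z\cup\{j\}$ in which $u\tos j$. Your sentence ``Subtracting this from $w(\Phi_{Z\cup\{j\}}(G))$ leaves exactly the forests in which $u\tos j$'' therefore already ends the proof, exactly as in the paper.

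By contrast, $\Phi_{Z\cup\{j\}\to Z\cup\{u\}}(G)$ consists of forests rooted at $Z\cup\{u\}$ in which $j\tos u$. It has a different root set, so your claimed set equality is false. The first equality in your last display is also false; the second is a correct instance of Eqn.~\ref{eq:span}.

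Here is a concrete counterexample. Take $\Vs(G)=\{u,j,t\}$ with edges $u\to j$ (label $a$) and $j\to t$ (label $b$), so $Z=\{t\}$ and $\Hs_u[G]$ has edges $u\to j$ (label $a$) and $j\to u$ (label $b$).
\begin{itemize}
\item The lemma holds: $w(\Phi_{\{j\}}(\Hs_u[G]))=a=w(\Phi_{\{t,u\}\to\{t,j\}}(G))$.
\item Your subtraction is correct: $w(\Phi_{\{t,j\}}(G))=a$ and the subtracted term $w(\Phi_{\{j,u\}\to\{t,j\}}(G))$ is $0$.
\item But no edge enters $u$, so $\sum_k w(\Phi_{\{t,u,j\}\to\{t,u,k\}}(G))\,\ell(k\to u)=0=w(\Phi_{\{t,j\}\to\{t,u\}}(G))$.
\end{itemize}
Your chain reaches the right formula only because the false set identity undoes the false equality. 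Delete the second application of Eqn.~\ref{eq:span}; there is no delicate matching to do. Similarly, ``a forest rooted at $Z\cup\{u\}$ in which $u\tos j$'' is empty for $j\neq u$, since a root leads only to itself.

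The same misreading affects your bijective sketch and your degenerate-case paragraph. For $j\neq u$, the vertex $u$ is not a root of a tree of $\Hs_u[G]$ rooted at $j$. It keeps its out-edge, which comes from Definition~\ref{def:hill}.1, so expanding the summed labels produces forests of $G$ rooted at $Z\cup\{j\}$, not $Z\cup\{u\}$. With that correction the bijection is short and gives a purely combinatorial alternative to the paper's determinant calculation. In one direction, the tree path from $u$ to $j$ never re-enters $u$, so it consists of rule-1 edges and survives unchanged in $G$. In the other direction, rerouting to $u$ the edges of a forest in $\Phi_{Z\cup\{u\}\to Z\cup\{j\}}(G)$ that enter $Z$ could only create a cycle through $u$. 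That is impossible, because the forward path from $u$ reaches $j$ through rule-1 edges.

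Finally, the degenerate case cannot arise. Every vertex of $\Zbar$ leads to $Z$, so $\Phi_{Z\cup\{j\}}(G)\neq\emptyset$ and $\Mb'$ is always invertible (the paper uses this without comment). The Zariski-density argument is therefore unnecessary.
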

\vspace{-1.8em}
\begin{proof}
Suppose, without loss of generality, that $u = 1$ and $Z = \left\{ N-T+1, \dotsc, N \right\}$, so that $\Zbar = \Vs(\Hs_1[G]) = \left\{ 1, \dotsc, N-T \right\}$. Let us also define $Y = \Zbar \setminus \{j\} = \overline{Z \cup \{j\}}$. It will be convenient for dealing with signs to use the negative transpose, 
\begin{equation}
    \Lb(G) = -\Ls(G)^\T \,,
\label{eq:lap-t}
\end{equation}
in lieu of the usual Laplacian matrix $\Ls(G)$. Noting that $\Hs_1[G]$ has $N - T$ vertices, the AMMTT in the form of Eqn.~\ref{eq:ammtt} then implies that, 
\[
    w(\Phi_{\{j\}}(\Hs_1[G])) = \left( -1 \right)^{N-T-1} \det{\Ls(\Hs_1[G])_{[\Zbar \setminus \{j\}, \Zbar \setminus \{j\}]}} = \det{\Lb(\Hs_1[G])_{[Y,Y]}}\,.
\]
Therefore, we want to show that 
\[
    \det{\Lb(\Hs_1[G])_{[Y,Y]}} = w(\Phi_{Z \cup \{1\} \to Z \cup \{j\}}(G)) \,.
\]
If we take the negative transpose of Eqn.~\ref{eq:lap-hill-rank1} and set $u = 1$, we see that, 
\begin{equation}
    \Lb(\Hs_1[G]) = \Lb(G)_{[\Zbar,\Zbar]} - \lambdab \, \deltab_1^\T \,.
\label{e-lh1}
\end{equation}
Since $\Zbar = \{1, \dotsc, N-T\}$, the ordering bijection $\theta_{\Zbar}$ is the identity and $\theta_{\Zbar}^{-1}(u) = 1$, so it follows from Eqn.~\ref{e-dltb} that $\deltab_1^\T = (1, 0, \dotsc, 0)$. Taking the $[Y,Y]$ submatrix of Eqn.~\ref{e-lh1} and recalling that $Y \subset \Zbar$, we see that, 
\begin{equation}
    \Lb(\Hs_1[G])_{[Y,Y]} = \Lb(G)_{[Y,Y]} - \lambdab_{[Y]} \, (\deltab_1)_{[Y]}^\T \,.
\label{eq:lap-t-hill-rank1}
\end{equation}
Let us consider first the case when $j = u = 1$. Then $Y = \{2, \dotsc, N-T\}$, so that,
\[ (\deltab_1)_{[Y]}^\T = \mathbf{0} \,.\]
Taking determinants, it follows from Eqn.~\ref{eq:lap-t-hill-rank1} that, 
\[
    \det{\Lb(\Hs_1[G])_{[Y,Y]}} = \det{\Lb(G)_{[Y,Y]}} \,.
\]
Since $\overline{Y} = Z \cup \{1\}$, the AMMTT in the form of Eqn.~\ref{eq:ammtt} tells us that, 
\[
    \det{\Lb(G)_{[Y,Y]}}
    = \left( -1 \right)^{N-T-1} \det{\Ls(G)_{[Y,Y]}}
    = w(\Phi_{Z \cup \{1\}}(G)) \,.
\]
Putting the pieces together, we see that,
\[ w(\Phi_{\{1\}}(\Hs_1[G])) = w(\Phi_{Z \cup \{1\}}(G)) \,. \]
Recalling that $\Phi_{A \ra A}(G) = \Phi_A(G)$, this proves the case when $j = u = 1$. 

We now consider the case when $j > u = 1$. If we apply the matrix determinant lemma (Eqn.~\ref{eq:det}) to Eqn.~\ref{eq:lap-t-hill-rank1} with,
\[
    \Mb = \Lb(G)_{[Y,Y]}\,, \qquad
    \ub = -\lambdab_{[Y]}\,, \qquad
    \vb = \left( \deltab_1 \right)_{[Y]} \,,
\]
and note that, since $j > 1$, $(\deltab_1)_{[Y]}^\T = (1, 0, \dotsc, 0)$, and we obtain, 
\[
    \det{\Lb(\Hs_1[G])}_{[Y,Y]} = \left( 1 - \sum_{k=1}^{N-T-1}{\left( \Lb(G)_{[Y,Y]} \right)_{1,k}^{-1} \, \left( \lambdab_{[Y]} \right)_k} \right) \det{\Lb(G)_{[Y,Y]}} \,. 
\]
The AMMTT in the form of Eqn.~\ref{eq:ammtt} tells us that the right-hand minor is given by 
\[
    \det{\Lb(G)_{[Y,Y]}}
    = \left( -1 \right)^{N-T-1} \det{\Ls(G)_{[Y,Y]}}
    = w(\Phi_{Z \cup \{j\}}(G)),
\]
whereas Eqn.~\ref{eq:inv} tells us that 
\[
    \left( \Lb(G)_{[Y,Y]} \right)_{1,k}^{-1}
    = -\left( \Ls(G)_{[Y,Y]} \right)_{k,1}^{-1}
    = \frac{w(\Phi_{Z \cup \{\theta_Y(1), j\} \to Z \cup \{\theta_Y(k), j\}}(G))}{w(\Phi_{Z \cup \{j\}}(G))}, 
\]
where the ordering bijection, $\theta_Y : \left\{ 1, \dotsc, N-T-1 \right\} \to Y = \Zbar \setminus \{j\}$, is given by  
\[
    \theta_Y(k) = \begin{cases}
        k & \text{if $k < j$} \\
        k + 1 & \text{if $k \geq j$.}
    \end{cases}
\]
Meanwhile, the $k$-th entry of $\lambdab_{[Y]}$ is simply
\[
    \left( \lambdab_{[Y]} \right)_k = \lambda_{\theta_Y(k), Z}.
\]
Putting these pieces together and noting that, since $j > 1$, $\theta_Y(1) = 1$, we get 
\[
    \det{\Lb(\Hs_1[G])}_{[Y,Y]}
    = w(\Phi_{Z \cup \{j\}}(G)) - \sum_{k=1}^{N-T-1}{\lambda_{\theta_Y(k),Z} \, w(\Phi_{Z \cup \{1,j\} \to Z \cup \{\theta_Y(k),j\}}(G))} \,,
\]
which can clearly be re-indexed to yield 
\[
    \det{\Lb(\Hs_1[G])}_{[Y,Y]} = w(\Phi_{Z \cup \{j\}}(G)) - \sum_{k \in Y}{\lambda_{k,Z} \, w(\Phi_{Z \cup \{1,j\} \to Z \cup \{k,j\}}(G))} \,.
\]
We now rearrange the right-hand sum, as follows: 
\begin{align*}
    & \det{\Lb(\Hs_1[G])}_{[Y,Y]} \\
    & \qquad = w(\Phi_{Z \cup \{j\}}(G)) - \sum_{k \in Y}{\left( \sum_{z \in Z : k \to_G z}{\ell(k \to_G z)} \right) w(\Phi_{Z \cup \{1,j\} \to Z \cup \{k,j\}}(G))} \\
    & \qquad = w(\Phi_{Z \cup \{j\}}(G)) - \sum_{z \in Z}{\,\sum_{k \in Y : k \to_G z}{\ell(k \to_G z) \, w(\Phi_{Z \cup \{1,j\} \to Z \cup \{k,j\}}(G))}}\,,
\end{align*}
at which point we can apply Eqn.~\ref{eq:span} to the inner sum, to get  
\[
    \det{\Lb(\Hs_1[G])}_{[Y,Y]}
    = w(\Phi_{Z \cup \{j\}}(G)) - \sum_{z \in Z}{w(\Phi_{((Z \cup \{j\}) \setminus \{z\}) \cup \{1\} \to Z \cup \{j\}}(G))} \,.
\]
Each term in the right-hand sum is the weight of all spanning forests of $G$ rooted at $Z \cup \{j\}$ with a path from $1$ to $z$. Therefore, the right-hand sum is the weight of all spanning forests of $G$ rooted at $Z \cup \{j\}$ with a path from $1$ to any of the terminal vertices. Subtracting this from the weight of all spanning forests rooted at $Z \cup \{j\}$, we obtain the weight of the subset of such spanning forests with a path from $1$ to $j$: 
\[
    \det{\Lb(\Hs_1[G])}_{[Y,Y]}
    = w(\Phi_{Z \cup \{1\} \to Z \cup \{j\}}(G)) \,.
\]
This yields the desired result. 
\end{proof}

\subsection{The exchange formula}
\label{sec:exchange}

We now turn to the second of our two fundamental lemmas, which we call the \emph{exchange formula}. It will allow us to move between different Hill constructions, by converting weights of spanning trees in one construction to weights of spanning trees in the other construction, at the expense of introducing \emph{exchange factors}. We again let $G$ be a graph whose terminal SCCs are singletons. Let $\emptyset \neq Z \subsetneq \Vs(G)$ be the set of terminal vertices, with $T = \#Z$. Let $u \in \Zbar$ be a source vertex. We first need to clarify when a Hill construction has spanning trees rooted at a given vertex.

\vspace{0.5em}
\begin{lemma}
Let $k \in \Zbar$. $\Phi_{\{k\}}(\Hs_u[G])$ is non-empty if, and only if, $u \tos_G k$.
\label{l-rst0}
\end{lemma}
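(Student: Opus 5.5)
The plan is to argue structurally, using the standard fact from Section~\ref{sec:background} that a vertex $k$ of a graph roots a spanning tree if, and only if, every vertex leads to $k$, which for a graph with a single terminal SCC means that $k$ lies in that SCC. By Lemma~\ref{lem:hill-scc}, $\Hs_u[G]$ has a unique terminal SCC, call it $C$, and $u \in C$. So $\Phi_{\{k\}}(\Hs_u[G]) \neq \emptyset$ if, and only if, $k \in C$. Since $u \in C$ and $C$ is terminal, $k \in C$ holds if, and only if, $u \tos_{\Hs_u[G]} k$. For the reverse direction, note that $k \tos_{\Hs_u[G]} u$ always holds by Lemma~\ref{lem:hill-scc}, so $u \tos_{\Hs_u[G]} k$ makes $k$ strongly connected to $u$. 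For the forward direction, nothing leaves a terminal SCC. Finally, Lemma~\ref{lem:hill-paths} converts $u \tos_{\Hs_u[G]} k$ into $u \tos_G k$, which completes the chain of equivalences.

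The one step needing care is the claim that a spanning tree rooted at $k$ exists exactly when every vertex leads to $k$. I would justify it directly. If $F \in \Phi_{\{k\}}(\Hs_u[G])$, then every vertex leads to the root $k$ within $F$, hence within $\Hs_u[G]$; in particular $u \tos_{\Hs_u[G]} k$, which gives the forward direction immediately, without reference to SCCs. Conversely, suppose every vertex leads to $k$. Take a breadth-first in-tree towards $k$: for each vertex $i \neq k$, choose one outgoing edge on a shortest path from $i$ to $k$. This yields a subgraph with exactly one outgoing edge from each non-root vertex. It has no cycles because the distance to $k$ strictly decreases along each edge. Hence it is a spanning tree rooted at $k$.

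The reverse direction then runs as follows. Assume $u \tos_G k$. Lemma~\ref{lem:hill-paths} gives $u \tos_{\Hs_u[G]} k$. Lemma~\ref{lem:hill-scc} gives $i \tos_{\Hs_u[G]} u$ for every $i \in \Zbar$. Transitivity then gives $i \tos_{\Hs_u[G]} k$ for all $i$, and the in-tree construction produces an element of $\Phi_{\{k\}}(\Hs_u[G])$.

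I expect no real obstacle. The only subtlety is invoking the tree-existence criterion correctly, and since $\rho_k \neq 0$ if, and only if, $k$ lies in the terminal SCC was already stated after Eqn.~\ref{eq:rho}, that remark can be cited instead of the explicit construction.
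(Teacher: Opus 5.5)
Your proof is correct and follows the paper's argument. For the forward direction you read $u \tos_{\Hs_u[G]} k$ off the tree and transfer it to $G$ via Lemma~\ref{lem:hill-paths}; for the reverse direction you combine Lemma~\ref{lem:hill-paths} with the fact that every vertex of $\Hs_u[G]$ leads to $u$ and then concatenate paths. The paper re-derives that fact inline rather than citing Lemma~\ref{lem:hill-scc}.

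Your shortest-path in-tree construction makes explicit the step the paper dismisses as ``readily follows.'' It is valid once you note that each vertex has at most one outgoing edge, so any cycle with directions ignored would be a directed cycle, which the strictly decreasing distance rules out.
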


\begin{proof}
Suppose that $u \tos_G k$. It follows from Lemma~\ref{lem:hill-paths} that $u \tos_{\Hs_u[G]} k$. Now choose any vertex $w \in \Vs(\Hs_u[G]) = \Zbar$. Since $Z$ is the set of terminal vertices of $G$, there must be a path in $G$ from $w$ to some terminal vertex in $Z$. Again as a consequence of Definition~\ref{def:hill}, this path in $G$ gives rise to a corresponding path to $u$ in $\Hs_u[G]$, so that $w \tos_{\Hs_u[G]} u$. By concatenation, $w \tos_{\Hs_u[G]} k$. Hence, every vertex in $\Zbar = \Vs(\Hs_u[G])$ has a path to $k$, from which it readily follows that there is a spanning tree of $\Hs_u[G]$ rooted at $k$. Hence, $\Phi_{\{k\}}(\Hs_u[G]) \not= \emptyset$. Conversely, if there exists a spanning tree of $\Hs_u[G]$ that is rooted at $k$, then, evidently, $u \tos_{\Hs_u[G]} k$. It then follows from Lemma~\ref{lem:hill-paths} that $u \tos_G k$, as claimed. 
\end{proof}

The exchange factors can now be defined as the obvious ratios of weights. 

\vspace{0.5em}
\begin{definition}
    Let $u, v \in \Zbar$ be source vertices, which need not be distinct. Let $k \in \Zbar$ be such that $u \tos_G k$. The exchange factor $\psi_{k|v,u}$ for the graph $G$ is defined by,
    \[ \psi_{k|v,u} = \frac{w(\Phi_{\{k\}}(\Hs_v[G]))}{w(\Phi_{\{k\}}(\Hs_u[G]))} \,.\]
\label{lem:exchange-def}
\end{definition}

Exchange factors are well-defined by Lemma~\ref{l-rst0}, and it should be remembered that they are defined for the graph $G$. By definition, exchange factors are manifestly integrally positive rational algebraic functions of the edge labels. Let us see what an exchange factor looks like for the graph, $K$, in Fig.~\ref{fig:exchange}A. This five-vertex graph has two terminal vertices, $4$ and $5$, so that the Hill constructions $\Hs_1[K]$ and $\Hs_2[K]$ both have three vertices, as shown. We see that $1 \tos_K 3$, so that the exchange factor $\psi_{3|2,1}$ is defined. Fig.~\ref{fig:exchange}B shows the spanning trees rooted at $3$ of both $\Hs_1[K]$ and $\Hs_2[K]$, from which we conclude that, 
\begin{equation}
    \psi_{3|2,1} = \frac{w(\Phi_{\{3\}}(\Hs_2[K]))}{w(\Phi_{\{3\}}(\Hs_1[K]))} = \frac{k_1k_2}{k_1k_3 + k_1k_2} = \frac{k_2}{k_2 + k_3}.
\label{eq:psi-ex1}
\end{equation}
On the other hand, $1 \not\tos_K 2$ and there are no spanning trees of $\Hs_1[K]$ that are rooted at $2$. There is, therefore, no exchange factor $\psi_{2|v,1}$ for any $v$.

Note that, in Eqn.~\ref{eq:psi-ex1}, $\psi_{3|2,1}$ is expressed solely in terms of the labels in $\Hs_1[K]$, and makes no use of the labels in $\Hs_2[K]$. Definition~\ref{lem:exchange-def} would have little point, except that we will show more generally that $\psi_{k|v,u}$ may be calculated solely in terms of spanning forests of $\Hs_u[G]$, together with some labels of $G$, but with no involvement of $\Hs_v[G]$. We now embark on that calculation.

\begin{figure}
\centering
\includegraphics[trim={1.7in 3.9in 1.8in 3.2in}, clip, width=0.9\textwidth]{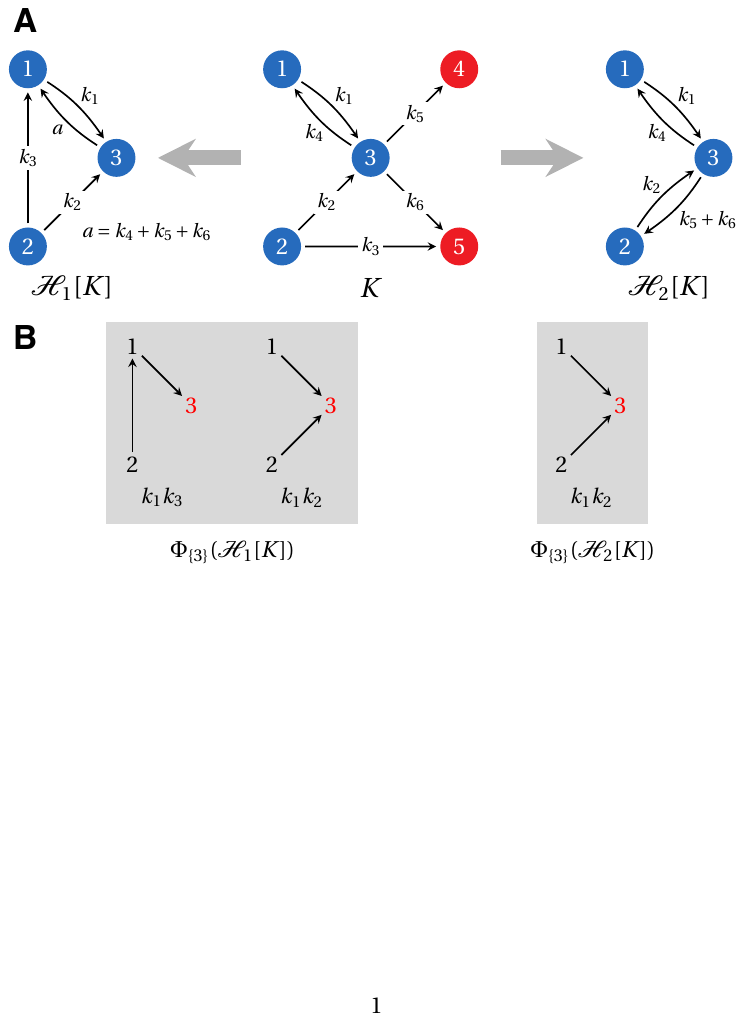}
\caption{Exchange factors. (\textbf{A}) \emph{Center:} A five-vertex graph, $K$, with two terminal vertices, $4$ and $5$ (red). \emph{Left and right:} The Hill constructions, $\Hs_1[K]$ and $\Hs_2[K]$. (\textbf{B}) The two spanning trees of $\Hs_1[K]$ rooted at $3$ (\emph{left}) and the one spanning tree of $\Hs_2[K]$ rooted at $3$ (\emph{right}).
}
\label{fig:exchange}
\end{figure}

We will make the same assumptions as in Definition~\ref{lem:exchange-def}. As in the proof of Lemma \ref{lem:hill}, we will assume, without any loss of generality, that $Z = \left\{ N-T+1, \dotsc, N \right\}$. Taking the negative transpose, as before, the rank-one update formula (Eqn.~\ref{eq:lap-hill-rank1}) tells us that 
\[
    \Lb(\Hs_u[G]) = \Lb(G)_{[\Zbar,\Zbar]} - \lambdab \, \deltab_u^\T
    \quad \text{and} \quad
    \Lb(\Hs_v[G]) = \Lb(G)_{[\Zbar,\Zbar]} - \lambdab \, \deltab_v^\T \,.
\]
Therefore, 
\[
    \Lb(\Hs_v[G]) = \Lb(\Hs_u[G]) + \lambdab \left( \deltab_u - \deltab_v \right)^\T \,.
\]
Now choose $k \in \Zbar = \left\{ 1, \dotsc, N-T \right\}$ such that $u \tos_G k$ and let $Y = \Zbar \setminus \{k\}$. Taking the determinant of the $[Y,Y]$ submatrix, we see that, 
\[
    \det{\Lb(\Hs_v[G])_{[Y,Y]}} = \det{\left( \Lb(\Hs_u[G])_{[Y,Y]} + \lambdab_{[Y]} (\deltab_u - \deltab_v)_{[Y]}^\T \right)} \,.
\]
By assumption, $u \tos_G k$, so, according to Lemma~\ref{l-rst0}, $\Phi_{\{k\}}(\Hs_u[G]) \not= \emptyset$. If we consider $Y$ as a subset of $\Vs(\Hs_u[G]) = \Zbar$, so that $\overline{Y} = \Zbar \setminus Y = \{k\}$ and $\overline{\{k\}} = Y$, it follows from the AMMTT (Eqn.~\ref{eq:ammtt}) that $\Lb(\Hs_u[G])_{[Y,Y]}$ is invertible. Applying the matrix determinant lemma (Eqn.~\ref{eq:det}) and reorganising,
\[
    \frac{\det\Lb(\Hs_v[G])_{[Y,Y]}}{\det\Lb(\Hs_u[G])_{[Y,Y]}} = 1 + \left( \deltab_u - \deltab_v \right)_{[Y]}^\T \left( \Lb(\Hs_u[G])_{[Y,Y]} \right)^{-1} \lambdab_{[Y]} \,.
\]
It then follows directly from the AMMTT (Eqn.~\ref{eq:ammtt}) that our exchange factor is given by, 
\[
    \psi_{k|v,u} = 1 + (\deltab_u - \deltab_v)_{[Y]}^\T \left( \Lb(\Hs_u[G])_{[Y,Y]} \right)^{-1} \lambdab_{[Y]} \,.
\]
It remains only to evaluate the matrix product on the right-hand side. At this point, we have to consider three rather annoying corner cases: $u = v$, $k = u \not= v$ and $k = v \not= u$. To avoid needlessly complicating the exposition, we leave it as a straightforward exercise to the reader to confirm that the formula that we derive here for the general case---$u \not= v$, $k \not= u$ and $k \not= v$---also gives the correct results in these corner cases. We will return to these cases below.

Recall that $\theta_Y: \{1, \dotsc, N-T-1\} \ra Y$ is the ordering bijection, introduced before Eqn.~\ref{eq:inv}. Accordingly, in the general case, the column vector $(\deltab_u - \deltab_v)_{[Y]}$ has $1$ for its $\theta_Y^{-1}(u)$-th entry and $-1$ for its $\theta_Y^{-1}(v)$-th entry, while the column vector $\lambdab_{[Y]}$ has entries $\lambda_{\theta_Y(m),Z}$ for $1 \leq m \leq N-T-1$. Here, $\lambda_{\theta_Y(m),Z}$ is the sum of the edge labels in $G$ from $\theta_Y(m)$ to all the terminal vertices, as defined in Eqn.~\ref{e-luz}. It follows that,
\begin{equation}
\begin{aligned}
    & (\deltab_u - \deltab_v)_{[Y]}^\T \left( \Lb(\Hs_u[G])_{[Y,Y]} \right)^{-1} \lambdab_{[Y]} \\
    & \quad = \sum_{m=1}^{N-T-1} \left(\left(\Lb(\Hs_u[G])_{[Y,Y]} \right)^{-1}_{\theta_Y^{-1}(u),m} - \left(\Lb(\Hs_u[G])_{[Y,Y]}\right)^{-1}_{\theta_Y^{-1}(v),m}\right) \lambda_{\theta_Y(m),Z} \,.
\end{aligned}
\label{e-dudv}
\end{equation}
The formula for the inverse of a Laplacian submatrix (Eqn.~\ref{eq:inv}) tells us that, using $w$ for either $u$ or $v$ and recalling again that $\overline{Y} = \{k\}$,
\[
    \left( \Lb(\Hs_u[G])_{[Y,Y]} \right)^{-1}_{\theta_Y^{-1}(w),m}
    = \frac{w(\Phi_{\{k,w\} \to \{k,\theta_Y(m)\}}(\Hs_u[G]))}{w(\Phi_{\{k\}}(\Hs_u[G]))} \,.
\]
This formula takes us from matrices on the right to forests on the right, so, after substituting, we can re-index with $m \in Y$ rather than $1 \leq m \leq N-T-1$, and rewrite the matrix product in Eqn.~\ref{e-dudv} as, 
\begin{align*}
    & (\deltab_u - \deltab_v)_{[Y]}^\T \left( \Lb(\Hs_u[G])_{[Y,Y]} \right)^{-1} \lambdab_{[Y]} \\
    & \quad = \sum_{m \in Y}{\left( \frac{w(\Phi_{\{k,u\} \to \{k,m\}}(\Hs_u[G])) - w(\Phi_{\{k,v\} \to \{k,m\}}(\Hs_u[G]))}{w(\Phi_{\{k\}}(\Hs_u[G]))} \right) \lambda_{m,Z}} \,.
\end{align*}
Putting the pieces together, we arrive at the following conclusion. 

\vspace{0.5em}
\begin{lemma}[Exchange formula]
Under the assumptions made above, the exchange factor $\psi_{k|v,u}$ of the graph $G$ is given by, 
\begin{equation}
1 + \sum_{m \in \Zbar \setminus \{k\}}{\left( \frac{w(\Phi_{\{k,u\} \to \{k,m\}}(\Hs_u[G])) - w(\Phi_{\{k,v\} \to \{k,m\}}(\Hs_u[G]))}{w(\Phi_{\{k\}}(\Hs_u[G]))} \right) \lambda_{m,Z}} \,.
\label{e-excf}
\end{equation}
\label{lem:exchange}
\end{lemma}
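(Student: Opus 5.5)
The derivation preceding the statement already proves the general case, in which $u \neq v$, $k \neq u$ and $k \neq v$. So the proof will consist of collecting that computation and then checking the three corner cases the text deferred: $u = v$, $k = u \neq v$ and $k = v \neq u$.

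The general case runs as follows. Start from the rank-one relation $\Lb(\Hs_v[G]) = \Lb(\Hs_u[G]) + \lambdab(\deltab_u - \deltab_v)^\T$ and restrict it to $Y = \Zbar \setminus \{k\}$. Since $u \tos_G k$, Lemma~\ref{l-rst0} and the AMMTT (Eqn.~\ref{eq:ammtt}) make $\Lb(\Hs_u[G])_{[Y,Y]}$ invertible. The matrix determinant lemma (Eqn.~\ref{eq:det}) then expresses the ratio of determinants, and the AMMTT identifies that ratio as $\psi_{k|v,u}$, because the signs $(-1)^{N-T-1}$ cancel. Finally, Eqn.~\ref{eq:inv}, applied to $\Hs_u[G]$ with removed set $\{k\}$, converts the entries of the inverse into forest weights. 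Re-indexing by $m \in Y$ gives Eqn.~\ref{e-excf}.

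The corner cases use one convention: when $w = k$, the set $\Phi_{\{k,w\}\to\{k,m\}}$ should be read as $\Phi_{\{k\}\to\{k,m\}}$. This is empty, since it asks for forests with two roots but only one root set of size one, so its weight is $0$. With that convention in place, the cases go as follows.
\begin{itemize}
\item[(i)] If $u = v$, the numerators cancel termwise, so Eqn.~\ref{e-excf} gives $1$, which is trivially $\psi_{k|u,u}$.
\item[(ii)] If $k = u \neq v$, then $\deltab_u$ restricted to $Y$ is zero, so the matrix determinant lemma produces only the $-\deltab_v$ term. This matches Eqn.~\ref{e-excf} once the $u$-terms are read as zero under the convention above.
\item[(iii)] If $k = v \neq u$, the roles are symmetric: only the $\deltab_u$ term survives, and the $v$-terms are zero under the same convention.
\end{itemize}

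I expect the main obstacle to be this bookkeeping. One must make precise what $\Phi_{\{k,k\}\to\{k,m\}}$ means, note that $\Phi_{\{k,w\}\to\{k,m\}}$ contains exactly the forests rooted at $\{k,m\}$ in which $w \tos m$, and check the sign conventions. In particular, $\Lb = -\Ls^\T$, so $(\Lb_{[Y,Y]})^{-1}_{a,b} = -(\Ls_{[Y,Y]})^{-1}_{b,a}$. Combined with the minus sign in Eqn.~\ref{eq:inv}, this gives a positive ratio with the source index $w$ in the first slot, which is the form the displayed formula uses.
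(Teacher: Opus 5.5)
Your proposal is correct and follows the paper's argument step for step: restrict the rank-one relation $\Lb(\Hs_v[G]) = \Lb(\Hs_u[G]) + \lambdab(\deltab_u - \deltab_v)^\T$ to $Y = \Zbar\setminus\{k\}$, get invertibility from Lemma~\ref{l-rst0} and the AMMTT, apply the matrix determinant lemma, identify the determinant ratio as $\psi_{k|v,u}$, and use Eqn.~\ref{eq:inv} to turn the inverse entries into forest weights. Your corner-case treatment, which notes that $(\deltab_u)_{[Y]}$ or $(\deltab_v)_{[Y]}$ vanishes and reads $\Phi_{\{k,k\}\to\{k,m\}}$ as empty, uses the same convention the paper adopts in its discussion after the lemma, so you have also completed the exercise the paper leaves to the reader.
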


The salient point here, as mentioned above, is that the expression in Eqn~\ref{e-excf} depends solely on the spanning forests of $\Hs_u[G]$, together with the aggregated labels, $\lambda_{m,Z}$ of $G$, and not on any quantities derived from $\Hs_v[G]$.

The expression in Eqn.~\ref{e-excf} is not manifestly integrally positive, although $\psi_{k|v,u}$ does have a manifestly integrally positive expression through Definition~\ref{lem:exchange-def}. This tells us that there must be cancellations embedded within the expression for $\psi_{k|v,u}$ in Eqn~\ref{e-excf}. These cancellations are easy to find in the corner cases mentioned above, which allows us also to see what these cases look like when derived from Eqn.~\ref{e-excf}. If $u = v$, then the numerator within the summation in Eqn.~\ref{e-excf} becomes zero and so $\psi_{k|u,u} = 1$, as expected from Definition~\ref{lem:exchange-def}. If $k = v \not= u$, then the right-hand set of spanning forests within the summation, $\Phi_{\{k,v\} \ra \{k,m\}}(\Hs_u[G])$, is not properly defined because $\{k,v\} = \{v\}$ has size $1$ and $\{k,m\} = \{v,m\}$ has size $2$. In particular, if $v$ is a root, it cannot have a path to the other root $m$. We can hence treat $\Phi_{\{k,v\} \ra \{k,m\}}(\Hs_u[G])$ as empty, so that $w(\Phi_{\{k,v\} \ra \{k,m\}}(\Hs_u[G])) = 0$. The expression for the exchange factor then simplifies to,
\[
    \psi_{v|v,u} = 1 + \sum_{m \in Y}{\left( \frac{w(\Phi_{\{v,u\} \to \{v,m\}}(\Hs_u[G]))}{w(\Phi_{\{v\}}(\Hs_u[G]))} \right) \lambda_{m,Z}} \,,
\]
which is manifestly integrally positive. The more interesting situation arises when $k = u \not= v$, for which, by the same reasoning, 
\begin{equation} 
\psi_{u|v,u} = 1 - \sum_{m \in Y} {\left( \frac{w(\Phi_{\{u,v\} \to \{u,m\}}(\Hs_u[G]))}{w(\Phi_{\{u\}}(\Hs_u[G]))} \right) \lambda_{m,Z}} \,.
\label{e-puvu}
\end{equation}
How do the cancellations manifest themselves in this other special case?

We can appeal to the elementary result from our prequel paper \cite[Eqn.~24]{nam:25}, which is repeated in Eqn.~\ref{eq:span}. Applying this result to $\Hs_u[G]$ and taking $z = u$, $B = \{u\}$ and $i = v \in \overline{B} = \Vs(\Hs_u[G]) \setminus \{u\}$, Eqn~\ref{eq:span} tells us that, 
\[
    w(\Phi_{\{u\}}(\Hs_u[G])) = \sum_{j \in \overline{B}, j \ra_{\Hs_u[G]} u} w(\Phi_{\{u,v\} \ra \{u,j\}}(\Hs_u[G])) \, \ell(j \ra_{\Hs_u[G]} u) \,.
\]
However, because $k = u$, we have $\overline{B} = Y$, so we may rewrite the right-hand side of the formula above using the same index $m \in Y$ as we used in Eqn.~\ref{e-puvu} to get, 
\[
    \sum_{m \in Y, m \ra_{\Hs_u[G]} u} w(\Phi_{\{u,v\} \ra \{u,m\}}(\Hs_u[G])) \, \ell(m \ra_{\Hs_u[G]} u) \,.
\]
If we now divide by $w(\Phi_{\{u\}}(\Hs_u[G]))$ on both sides, we get, 
\begin{equation} 
    1 = \sum_{m \in Y, m \ra_{\Hs_u[G]} u} \left( \frac{w(\Phi_{\{u,v\} \ra \{u,m\}}(\Hs_u[G]))}{w(\Phi_{\{u\}}(\Hs_u[G]))}\right)\ell(m \ra_{\Hs_u[G]} u) \,.
\label{e-mym}
\end{equation}
The summation term in Eqn.~\ref{e-mym} is suspiciously close to that in Eqn.~\ref{e-puvu}, but Eqn.~\ref{e-puvu} has a seemingly larger range than Eqn.~\ref{e-mym}. However, referring back to Definition~\ref{def:hill}, if $m \not\ra_{\Hs_u[G]} u$, then $m$ evidently cannot have an edge to a terminal vertex in $G$, so that $\lambda_{m,Z} = 0$. Hence, the range of summation can be taken in both cases to be the same: $m \in Y, \, m \ra_{\Hs_u[G]} u$. Substituting Eqn.~\ref{e-mym} into Eqn.~\ref{e-puvu}, we see that $\psi_{u|v,u}$ is given by, 
\[
    \sum_{m \in Y, m \ra_{\Hs_u[G]} u} \left( \frac{w(\Phi_{\{u,v\} \ra \{u,m\}}(\Hs_u[G]))}{w(\Phi_{\{u\}}(\Hs_u[G]))}\right)(\ell(m \ra_{\Hs_u[G]} u) - \lambda_{m,Z}) \,.
\]
The departure from manifest positivity now lies solely in the right-hand factor. We see from Definition~\ref{def:hill} that, if $m \ra_G u$, then this factor is simply $\ell(m \ra_G u)$, while, if $m \not\ra_G u$, then this factor is zero. Since $m \ra_G u$ implies that $m \ra_{\Hs_u[G]} u$, we can index the sum over a smaller subset and recover manifest positivity with the formula,
\[
    \psi_{u|v,u} = \sum_{\substack{m \in Y, \, m \ra_G u}} \left( \frac{w(\Phi_{\{u,v\} \ra \{u,m\}}(\Hs_u[G]))}{w(\Phi_{\{u\}}(\Hs_u[G]))} \right) \ell(m \ra_G u) \,.
\]

Aside from these special cases, where $u = v$, $k = u \not= v$ or $k = v \not= u$, the source of the cancellations in Eqn.~\ref{e-excf} that yield manifest positivity are not easy to identify. We leave this as a problem for future work. 

Let us see what Lemma~\ref{lem:exchange} tells us for the exchange factor $\psi_{3|2,1}$, that we previously calculated directly (Eqn.~\ref{eq:psi-ex1}) for the graph $K$ in Fig.~\ref{fig:exchange}A. In this case, $u = 1$, $v = 2$ and $Z = \{4,5\}$, so that $\Zbar = \{1,2,3\}$ and $Y = \Zbar \setminus \{3\} = \{1,2\}$. Hence, Eqn.~\ref{e-excf} tells us that $\psi_{3|2,1}$ is given by 
\[
    1 + \sum_{m \in \{1,2\}}{\left( \frac{w(\Phi_{\{1,3\} \to \{m,3\}}(\Hs_1[K])) - w(\Phi_{\{2,3\} \to \{m,3\}}(\Hs_1[K]))}{w(\Phi_{\{3\}}(\Hs_1[K]))} \right) \lambda_{m,\{4,5\}}} \,.
\]
We can read off from Fig.~\ref{fig:exchange}A that,
\[
    \lambda_{1,\{4,5\}} = 0 \quad \text{and} \quad \lambda_{2,\{4,5\}} = k_3 \,,
\]
and it follows from Fig.~\ref{fig:exchange}B that,
\[
    w(\Phi_{\{3\}}(\Hs_1[K])) = k_1 k_2 + k_1 k_3 \,.
\]
We leave it as an exercise for the reader to also confirm that, 
\begin{align*}
    w(\Phi_{\{1,3\} \to \{1,3\}}(\Hs_1[K]))
    & = k_2 + k_3 \\
    w(\Phi_{\{1,3\} \to \{2,3\}}(\Hs_1[K]))
    & = 0 \\
    w(\Phi_{\{2,3\} \to \{1,3\}}(\Hs_1[K]))
    & = k_3 \\
    w(\Phi_{\{2,3\} \to \{2,3\}}(\Hs_1[K]))
    & = k_1.
\end{align*}
Substituting these expressions into the formula above, we get,
\[
    \psi_{3|2,1} = 1 + \left( \frac{k_2 + k_3 - k_3}{k_1 k_2 + k_1 k_3} \right) \cdot 0 + \left( \frac{0 - k_1}{k_1 k_2 + k_1 k_3} \right) k_3
    = 1 - \frac{k_3}{k_2 + k_3} = \frac{k_2}{k_2 + k_3} \,.
\]
as expected from Eqn.~\ref{eq:psi-ex1}.

\subsection{Generalising Hill's procedure}
\label{sec:hill-general}

With the forest-tree lemma (Lemma \ref{lem:hill}) and the exchange formula (Lemma \ref{lem:exchange}), we now have the resources in hand to give straightforward proofs of Hill-like versions of the main results in our prequel paper \cite{nam:25}, stated previously in Eqns.~\ref{eq:split}, \ref{eq:moment-Z}, and \ref{eq:moment}. As before, let $G$ be a graph with singleton terminal SCCs; let $\emptyset \neq Z \subsetneq \Vs(G)$ be the set of terminal vertices, with $T = \# Z$; and let $u \in \Zbar$ be a non-terminal source vertex. We need to recall some notation introduced in our prequel paper \cite[\S2.1.1]{nam:25}. Given a graph $G$ and vertex $v \in \Vs(G)$, the subset of vertices leading out from $v$, denoted $\Pp(v) \subseteq \Vs(G)$, is given by $\Pp(v) = \{ j \in \Vs(G) : v \tos_G j \}$. Note that $v \in \Pp(v)$.

We begin with a simple deduction that combines Eqn.~\ref{eq:span} with the forest-tree lemma. 

\vspace{0.5em}
\begin{lemma}
With the assumptions above, choose $z \in Z$. Let $W \subseteq \Zbar = \Vs(\Hs_u[G])$ be the vertex subset of $\Hs_u[G]$ given by $W = \Pp(u)$. Then,  
\begin{align*}
    \sum_{k \in W : k \to_G z}{\ell(k \to_G z) \, w(\Phi_{\{k\}}(\Hs_u[G]))}
    & = w(\Phi_{(Z \setminus \{z\}) \cup \{u\} \to Z}(G))\,, \\
    \sum_{z \in Z}{\sum_{k \in W : k \to_G z}{\ell(k \to_G z) \, w(\Phi_{\{k\}}(\Hs_u[G]))}}
    & = w(\Phi_Z(G))\,.
\end{align*}
\label{cor:hill-2}
\end{lemma}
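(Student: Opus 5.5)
The plan is to rewrite each tree weight on the left-hand side as a forest weight of $G$ using the forest-tree lemma, and then collapse the sum with the recursion in Eqn.~\ref{eq:span}. For $k \in \Zbar$, Lemma~\ref{lem:hill} gives $w(\Phi_{\{k\}}(\Hs_u[G])) = w(\Phi_{Z \cup \{u\} \to Z \cup \{k\}}(G))$. Substituting this, the first left-hand side becomes
\[
    \sum_{k \in W : k \to_G z} \ell(k \to_G z)\, w(\Phi_{Z \cup \{u\} \to Z \cup \{k\}}(G)) \,.
\]
Now apply Eqn.~\ref{eq:span} with $B = Z$, the chosen $z \in Z$, and $i = u \in \Zbar$. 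This gives exactly
\[
    \sum_{k \in \Zbar : k \to_G z} w(\Phi_{Z \cup \{u\} \to Z \cup \{k\}}(G))\, \ell(k \to z) = w(\Phi_{(Z \setminus \{z\}) \cup \{u\} \to Z}(G)) \,,
\]
with the sum over all of $\Zbar$ rather than over $W = \Pp(u)$.

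The one point needing care is reconciling the summation range $W$ with $\Zbar$. I would show that the terms with $k \in \Zbar \setminus W$ vanish. If $u \not\tos_G k$, then no forest in $\Phi_{Z \cup \{u\} \to Z \cup \{k\}}(G)$ can exist, because such a forest would contain a path from $u$ to $k$ in $G$. So that weight is zero, by the empty-sum convention.

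Alternatively, Lemma~\ref{l-rst0} shows directly that $\Phi_{\{k\}}(\Hs_u[G])$ is empty for such $k$. The case $k = u$ also needs a check. Here $\Phi_{Z\cup\{u\}\to Z\cup\{u\}}(G) = \Phi_{Z \cup \{u\}}(G)$, which is consistent with both the forest-tree lemma and the convention that $u \tos u$. Hence restricting to $W$ loses nothing, and the first identity follows.

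For the second identity, I would sum the first over $z \in Z$, which reduces it to showing
\[
    \sum_{z \in Z} w(\Phi_{(Z\setminus\{z\}) \cup \{u\} \to Z}(G)) = w(\Phi_Z(G)) \,.
\]
This is a partition argument, of the kind used at the end of the proof of Lemma~\ref{lem:hill}. In every forest $F \in \Phi_Z(G)$, the vertex $u \notin Z$ leads to a unique root $z \in Z$, and each $z' \in Z \setminus \{z\}$ leads to itself. So $F$ lies in exactly one of the sets $\Phi_{(Z\setminus\{z\})\cup\{u\}\to Z}(G)$, and these sets partition $\Phi_Z(G)$.

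No step looks hard. The main thing to verify carefully is the range bookkeeping ($W$ versus $\Zbar$, and the $k = u$ term) when invoking Eqn.~\ref{eq:span}.
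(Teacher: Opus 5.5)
Your proposal is correct and follows essentially the same route as the paper. You extend the sum from $W$ to $\Zbar$ because the extra terms vanish, convert tree weights of $\Hs_u[G]$ into forest weights of $G$ with the forest-tree lemma, and collapse the sum with Eqn.~\ref{eq:span} at $B = Z$, $i = u$. The second identity then follows because the sets $\Phi_{(Z\setminus\{z\})\cup\{u\}\to Z}(G)$ partition $\Phi_Z(G)$ according to which root $u$ reaches.

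The only difference is the order of steps. The paper discards the $k \notin W$ terms on the $\Hs_u[G]$ side before invoking the forest-tree lemma, which is your ``alternative'' via Lemma~\ref{l-rst0}. Either order works, since Lemma~\ref{lem:hill-paths} makes reachability of $k \in \Zbar$ from $u$ the same in $G$ and in $\Hs_u[G]$.
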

\vspace{-0.8em}

\begin{proof}
Choose $k \in \Zbar = \Vs(\Hs_u[G])$. If $k \not\in W$, then $u \not\tos_{\Hs_u[G]} k$, so that $\Phi_{\{k\}}(\Hs_u[G]) = \emptyset$ and $w(\Phi_{\{k\}}(\Hs_u[G])) = 0$. Hence, the left-hand side of the first equation may be rewritten with $\Zbar$ as the index set in place of $W$,
\[
    \sum_{k \in W : k \to_G z}{\ell(k \to_G z) \, w(\Phi_{\{k\}}(\Hs_u[G]))}
    = \sum_{k \in \Zbar : k \to_G z}{\ell(k \to_G z) \, w(\Phi_{\{k\}}(\Hs_u[G]))} \,.
\]
Lemma \ref{lem:hill} now allows us to exchange spanning trees of $\Hs_u[G]$ for spanning forests of $G$, to get, 
\[
    \sum_{k \in \Zbar : k \to_G z}{\ell(k \to_G z) \, w(\Phi_{Z \cup \{u\} \to Z \cup \{k\}}(G))} \,.
\]
We can then apply Eqn.~\ref{eq:span}, with $B = Z$ and $i = u$, to get, 
\[
    w(\Phi_{(Z \setminus \{z\}) \cup \{u\} \to Z}(G)) \,,
\]
which gives the first equation. 

For the second equation in the statement of Lemma~\ref{cor:hill-2}, summing the right-hand side of the first equation over $z \in Z$ corresponds to forming the set of spanning forests rooted at $Z$, in which $u$ has a path to any terminal vertex, $z \in Z$. Since $u$ has a path to some terminal vertex in each spanning forest rooted at $Z$, this set must be $\Phi_Z(G)$ itself. This yields the second equation.
\end{proof}

We can now easily prove our formalisation of Hill's result for splitting probabilities \cite{hill:88}.

\vspace{0.5em}
\begin{theorem}[Eqn.~\ref{eq:split} revisited]
With the same assumptions as at the start of this section, choose $z \in Z$ and let $W$ be as defined in the statement of Lemma~\ref{cor:hill-2}. The splitting probability from $u$ to $z$ is given by,
\[
    \pi_{u,z}(G) = \frac{\sum_{k \in W : k \to_G z}{\ell(k \to_G z) \, x_k^*(\Hs_u[G])}}{\sum_{z' \in Z}{\sum_{k \in W : k \to_G z'}{\ell(k \to_G z') \, x_k^*(\Hs_u[G])}}} \,.
\]
\label{thm:hill-split}
\end{theorem}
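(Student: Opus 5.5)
The plan is to start from the forest formula for the splitting probability, Eqn.~\ref{eq:split}, which holds whenever $u \tos_G z$, and to rewrite its numerator and denominator with Lemma~\ref{cor:hill-2}. The first equation of that lemma identifies the numerator $w(\Phi_{(Z\setminus\{z\})\cup\{u\}\to Z}(G))$ with $\sum_{k\in W: k\to_G z}\ell(k\to_G z)\,w(\Phi_{\{k\}}(\Hs_u[G]))$. The second equation identifies the denominator $w(\Phi_Z(G))$ with the same expression summed over $z'\in Z$. Hence $\pi_{u,z}(G)$ equals the ratio of these two sums, with the unnormalised tree weights $\rho_k = w(\Phi_{\{k\}}(\Hs_u[G]))$ in place of the $x^*_k$.

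Next I will pass from $\rho_k$ to $x^*_k(\Hs_u[G])$. By Lemma~\ref{lem:hill-scc}, $\Hs_u[G]$ has a single terminal SCC, so Eqns.~\ref{eq:rho} and \ref{eq:ss} apply. They give $x^*_k(\Hs_u[G]) = \rho_k/\sum_{i\in\Zbar}\rho_i$ with the same normalising constant for every $k$. Dividing numerator and denominator by $\sum_i\rho_i$ therefore yields the displayed formula. I must check that this normaliser is nonzero, which holds because $u$ lies in the terminal SCC and so $\rho_u\neq 0$. I must also check that the denominator of the final expression is nonzero, which follows because it equals $w(\Phi_Z(G))/\sum_i\rho_i$ and $\Phi_Z(G)\neq\emptyset$ since every vertex leads to $Z$.

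The main point needing care is the case $u\not\tos_G z$, where Eqn.~\ref{eq:split} was stated only under the hypothesis $u\tos_G z$. In that case $\pi_{u,z}=0$, and I need the numerator of the claimed formula to vanish. This holds because any $k\in W$ with $k\to_G z$ would give $u\tos_G k\to_G z$, a contradiction. So the sum is empty, and the formula holds in this case as well. Alternatively, the first equation of Lemma~\ref{cor:hill-2} gives the same vanishing directly, since $\Phi_{(Z\setminus\{z\})\cup\{u\}\to Z}(G)$ is empty. Beyond this, the argument is a direct substitution, and I expect no real obstacle: the substantive work was already done in the forest-tree lemma.
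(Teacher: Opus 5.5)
Your proposal is correct and follows the paper's proof: substitute the two identities of Lemma~\ref{cor:hill-2} into the numerator and denominator of Eqn.~\ref{eq:split}. Then divide through by the common normaliser $\sum_{j\in\Zbar} w(\Phi_{\{j\}}(\Hs_u[G]))$, justified by Lemma~\ref{lem:hill-scc} and Eqns.~\ref{eq:rho} and \ref{eq:ss}. Your extra checks are sound details that the paper leaves implicit: the normaliser and $w(\Phi_Z(G))$ are nonzero, and in the case $u \not\tos_G z$ both sides vanish.
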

\vspace{-0.8em}

\begin{proof}
If we consider our formula for the splitting probabilities in Eqn.~\ref{eq:split}, we see that we can replace the numerator with the first equation in Lemma~\ref{cor:hill-2} and the denominator with the second equation in Lemma~\ref{cor:hill-2}, to get, 
\begin{equation}
    \pi_{u,z}(G) = \frac{\sum_{k \in W : k \to_G z}{\ell(k \to_G z) \, w(\Phi_{\{k\}}(\Hs_u[G]))}}{\sum_{z' \in Z}{\sum_{k \in W : k \to_G z'}{\ell(k \to_G z') \, w(\Phi_{\{k\}}(\Hs_u[G]))}}} \,.
\label{eq:hill-split-1}
\end{equation}
Lemma \ref{lem:hill-scc} tells us that $\Hs_u[G]$ has a single terminal SCC, so that we can use Eqns.~\ref{eq:rho} and \ref{eq:ss} to determine s.s.~probabilities. Noting that the vertices of $\Hs_u[G]$ are given by $\Zbar$, we see that, 
\[
    x_k^*(\Hs_u[G])
    = \frac{w(\Phi_{\{k\}}(\Hs_u[G]))}{\sum_{j \in \Zbar}{w(\Phi_{\{j\}}(\Hs_u[G]))}} \,.
\]
Dividing above and below in Eqn.~\ref{eq:hill-split-1} by the denominator of this s.s. probability yields the required result.
\end{proof}

Theorem~\ref{thm:hill-split} expresses splitting probabilities in terms of s.s.~probabilities of the Hill construction with source vertex $u$, $\Hs_u[G]$, together with the labels arising from those edges of $G$ that enter $Z$. It is sufficiently similar to Hill's own formula in \cite[Eqn.~7]{hill:88} that it seems reasonable to regard $\Hs_u[G]$ as an appropriate formalisation of Hill's intuitions, subject to the caveat mentioned at the end of \S\ref{sec:hill-def}. 

We now turn to the FPT to any terminal vertex, which Hill considered only for the mean \cite{hill:88}. We use our previous result (Eqn.~\ref{eq:moment-Z}) to derive a Hill-like formula for all moments of this FPT.

\vspace{0.5em}
\begin{theorem}[Eqn.~\ref{eq:moment-Z} revisited] 
With the same assumptions as at the start of this section, let $W$ be as defined in the statement of Lemma~\ref{cor:hill-2}. Let $\Omega \subseteq \Zbar^r$ be the subset of $r$-tuples, $(i_1, \dotsc, i_r)$ with $i_j \in \Zbar$, such that, 
\[
    u = i_0 \tos_G i_1 \tos_G \dotsb \tos_G i_{r-1} \tos_G i_r \,.
\]
Then, the $r$-th moment of the FPT from $u$ to $Z$ is given by,
\begin{equation}
    \mu_{u,Z}^{(r)}(G) = r! \sum_{(i_1, \dotsc, i_r) \in \Omega}{\left( \prod_{j=0}^{r-1}{\frac{\psi_{i_{j+1} | i_j,u} \, x_{i_{j+1}}^*(\Hs_u[G])}{\sum_{z \in Z}{\sum_{k \in W : k \to_G z}{\ell(k \to_G z) \, x_k^*(\Hs_u[G])}}}} \right)} \,.
\label{e-urZ}
\end{equation}
\label{thm:hill-moment-Z}
\end{theorem}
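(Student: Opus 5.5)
We start from Eqn.~\ref{eq:moment-Z} and rewrite each factor of the product. A single factor has the form
\[
\frac{w(\Phi_{Z \cup \{i_j\} \to Z \cup \{i_{j+1}\}}(G))}{w(\Phi_Z(G))} \,.
\]
The forest-tree lemma (Lemma~\ref{lem:hill}), applied with source vertex $i_j$ in place of $u$, identifies the numerator as $w(\Phi_{\{i_{j+1}\}}(\Hs_{i_j}[G]))$. The second equation of Lemma~\ref{cor:hill-2} identifies the denominator as
\[
\sum_{z \in Z}\sum_{k \in W : k \to_G z}\ell(k \to_G z)\, w(\Phi_{\{k\}}(\Hs_u[G])) \,.
\]
For the denominator we deliberately use the fixed source $u$. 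This is allowed because $w(\Phi_Z(G))$ does not depend on the source.

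The numerator still involves the varying Hill construction $\Hs_{i_j}[G]$, and we want only $\Hs_u[G]$. Definition~\ref{lem:exchange-def} handles this. Whenever $u \tos_G i_{j+1}$, we have
\[
w(\Phi_{\{i_{j+1}\}}(\Hs_{i_j}[G])) = \psi_{i_{j+1}|i_j,u}\, w(\Phi_{\{i_{j+1}\}}(\Hs_u[G])) \,.
\]
We then divide the numerator and the denominator of each factor by the common normaliser $\sum_{k \in \Zbar} w(\Phi_{\{k\}}(\Hs_u[G]))$. This is legitimate by Lemma~\ref{lem:hill-scc} together with Eqns.~\ref{eq:rho} and \ref{eq:ss}. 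It converts every tree weight of $\Hs_u[G]$ into a s.s.~probability $x^*(\Hs_u[G])$ and gives exactly the factor appearing in Eqn.~\ref{e-urZ}.

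The main obstacle is restricting the sum over $\Zbar^r$ to the set $\Omega$, and checking that exchange factors are used only where they are defined. We argue by support: we show that every tuple outside $\Omega$ contributes zero to Eqn.~\ref{eq:moment-Z}.
\begin{itemize}
\item Suppose $i_j \not\tos_G i_{j+1}$ for some $j$. Then $\Phi_{Z \cup \{i_j\} \to Z \cup \{i_{j+1}\}}(G)$ is empty, because a forest in which $i_j$ leads to the root $i_{j+1}$ would give a path $i_j \tos_G i_{j+1}$. Equivalently, by Lemma~\ref{l-rst0}, $\Phi_{\{i_{j+1}\}}(\Hs_{i_j}[G]) = \emptyset$.
\item Consequently, each surviving tuple satisfies $u = i_0 \tos_G i_1 \tos_G \dotsb \tos_G i_r$. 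By transitivity of $\tos_G$, this gives $u \tos_G i_{j+1}$ for every $j$.
\item The condition $u \tos_G i_{j+1}$ is precisely the hypothesis Definition~\ref{lem:exchange-def} needs for $\psi_{i_{j+1}|i_j,u}$ to be defined.
\item The same condition guarantees $i_{j+1} \in W$, so the nonzero tree weights of $\Hs_u[G]$ that appear all lie in $W$. This matches the index set used in Lemma~\ref{cor:hill-2}.
\end{itemize}

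The proof therefore proceeds in three steps. First, discard the tuples outside $\Omega$ as zero terms. Second, on $\Omega$, substitute the forest-tree lemma, then the exchange factors, then Lemma~\ref{cor:hill-2}, factor by factor. Third, normalise to s.s.~probabilities, which yields Eqn.~\ref{e-urZ}.

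One corner case needs checking. When $i_j = u$, the exchange factor is $\psi_{i_{j+1}|u,u} = 1$, so that factor reduces directly to the forest-tree lemma for the source $u$ and is consistent with the general step.
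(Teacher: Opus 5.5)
Your proposal is correct and follows essentially the same route as the paper's proof. You discard the tuples outside $\Omega$ as vanishing terms and convert each forest weight to $w(\Phi_{\{i_{j+1}\}}(\Hs_{i_j}[G]))$ by the forest-tree lemma. You then pass to $\Hs_u[G]$ via exchange factors, which are defined because $u \tos_G i_{j+1}$ by transitivity, rewrite $w(\Phi_Z(G))$ with the second part of Lemma~\ref{cor:hill-2}, and normalise to steady-state probabilities, where normalising factor by factor is the same as the paper's division by the $r$-th power of the normaliser.
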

\vspace{-1em}

\begin{proof}
In Eqn.~\ref{eq:moment-Z}, the denominator term, $w(\Phi_Z(G))$, is evidently independent of both the inner product and the outer summation. Let us extricate it, along with the factor $r!$, for now, so as to focus on what remains,
\begin{equation} 
    \sum_{(i_1, \dotsc, i_r) \in \Zbar^r} \left( \prod_{j=0}^{r-1} w(\Phi_{Z \cup \{i_j\} \to Z \cup \{i_{j+1}\}}(G)) \right)\,.
\label{e-rsi1}
\end{equation}
If the $r$-tuple $(i_1, \dotsc, i_r)$ does not satisfy the condition for being in $\Omega$, then $i_p \not\tos_G i_{p+1}$ for some $0 \leq p < r$. But then $\Phi_{Z \cup \{i_j\} \to Z \cup \{i_{j+1}\}}(G) = \emptyset$ and $w(\Phi_{Z \cup \{i_j\} \to Z \cup \{i_{j+1}\}}(G)) = 0$. It is therefore clear that Eqn.~\ref{eq:moment-Z} holds just as well with the outer summation taken over $(i_1, \dotsc, i_r) \in \Omega$, instead of over $(i_1, \dotsc, i_r) \in \Zbar^r$ (see also the comment after this proof). We can now apply the forest-tree lemma (Lemma~\ref{lem:hill}) so that Eqn.~\ref{e-rsi1} becomes, 
\begin{equation}
   \sum_{(i_1, \dotsc, i_r) \in \Omega} \left( \prod_{j=0}^{r-1} w(\Phi_{i_{j+1}}(\Hs_{i_j}[G])) \right) \,.
\label{e-rsi2}
\end{equation}
By assumption and the transitivity of the $\tos$ relation, $u = i_0 \tos_G i_{j+1}$ for $0 \leq j < r$. Therefore, the exchange factor  $\psi_{i_{j+1}|i_j,u}$ is defined (Definition~\ref{lem:exchange-def}), so that we can rewrite Eqn.~\ref{e-rsi2} as, 
\begin{equation}
   \sum_{(i_1, \dotsc, i_r) \in \Omega} \left( \prod_{j=0}^{r-1} \psi_{i_{j+1}|i_j,u} \, w(\Phi_{i_{j+1}}(\Hs_{u}[G])) \right) \,.
\label{e-rsi3}
\end{equation}
By virtue of the exchange formula (Lemma~\ref{lem:exchange}), $\Hs_u[G]$ becomes the only Hill construction that is required for Eqn.~\ref{e-rsi3}, in contrast to Eqn.~\ref{e-rsi2}.

We can now bring back into consideration the factor $r!$ and the denominator term, $w(\Phi_Z(G))$, from Eqn.~\ref{eq:moment-Z}. If we apply the second part of Lemma~\ref{cor:hill-2} to the denominator term, we see that, 
\[
    \mu^{(r)}_{u,Z}(G) = r! \sum_{(i_1, \dotsc, i_r) \in \Omega}{\left( \prod_{j=0}^{r-1}{\frac{\psi_{i_{j+1}|i_j,u} \, w(\Phi_{\{i_{j+1}\}}(\Hs_u[G]))}{\sum_{z \in Z}{\sum_{k \in W : k \to_G z}{\ell(k \to_G z) \, w(\Phi_{\{k\}}(\Hs_u[G]))}}}} \right)} \,.
\]
We can now divide above and below by 
\[
    \left( \sum_{m \in \Zbar}{w(\Phi_{\{m\}}(\Hs_u[G]))} \right)^r \,,
\]
and use Eqns.~\ref{eq:rho} and \ref{eq:ss} to convert weights to s.s.~probabilities, as we did in the proof of Theorem \ref{thm:hill-split}, from which the required result follows.
\end{proof}

The reader may have noticed that we could have introduced $\Omega$ into Eqn.~\ref{eq:moment-Z} and into our previous result in \cite[Theorem 6]{nam:25}, of which Eqn.~\ref{eq:moment-Z} is a restatement. We chose not to do so in \cite{nam:25} because, on balance, it would have complicated the statement of the result without much gain of insight. In contrast, $\Omega$ becomes a necessity for the statement of Theorem~\ref{thm:hill-moment-Z}, for otherwise the exchange factors would not be well-defined.

Let us take $r = 1$ in Theorem~\ref{thm:hill-moment-Z} and extricate once again the denominator term. For the numerator term, 
\[
    \sum_{i_1 \in \Omega} \psi_{i_1|i_0,u} \, x_{i_1}^*(\Hs_u[G])
    = \sum_{i_1 \in \Omega} x_{i_1}^*(\Hs_u[G])
    = \sum_{i_1 \in \Zbar} x_{i_1}^*(\Hs_u[G]) = 1 \,.
\]
The first equality arises because $i_0 = u$, so it follows from Definition~\ref{lem:exchange-def} that $\psi_{i_1|i_0,u} = 1$. The second equality arises because, if $i_1 \in \Zbar \setminus \Omega$, then $u \not\tos_G i_1$, which implies that $u \not\tos_{\Hs_u[G]} i_1$. Hence, $\Hs_u[G]$ cannot have a spanning tree rooted at $i_1$, which implies through Eqns.~\ref{eq:rho} and \ref{eq:ss} that $x_{i_1}^*(\Hs_u[G]) = 0$. We can therefore write the summation over $i_1 \in \Zbar$ instead of over $i_1 \in \Omega$. Since $\Zbar = \Vs(\Hs_u[G])$, the resulting expression then gives the total s.s.~probability, which must be $1$. Accordingly, we are only left with the denominator term, so that
\[
    \mu_{u,Z}^{(1)}(G) = \left( \sum_{z \in Z}{\sum_{k \in W : k \to_G z}{\ell(k \to_G z) \, x_k^*(\Hs_u[G])}} \right)^{-1} \,.
\]
This is, with the introduction of the Hill construction, essentially identical to Hill's formula for the mean FPT to $Z$ \cite[Eqn.~8]{hill:88}. 

We now turn to the moments of the conditional FPT from $u$ to a specific terminal vertex, $z \in Z$. Hill did not consider these quantities but we can follow the examples of Theorems~\ref{thm:hill-split} and \ref{thm:hill-moment-Z} and formulate the Hill-like result that follows. 

\vspace{0.5em}
\begin{theorem}[Eqn.~\ref{eq:moment} revisited]
With the same assumptions as at the start of this section, choose $z \in Z$. Let $W$ be as defined in the statement of Lemma~\ref{cor:hill-2} and let $\Omega$ be as defined in the statement of Theorem~\ref{thm:hill-moment-Z}. Then the $r$-th moment of the conditional FPT from $u$ to $z$ is given by,
\[
\begin{split}
    \mu^{(r)}_{c,u,z}(G) = r! \sum_{(i_1, \dotsc, i_r) \in \Omega}{\left( \prod_{j=0}^{r-1}{\frac{\psi_{i_{j+1}|i_j,u} \, x_{i_{j+1}}^*(\Hs_u[G])}{\sum_{z' \in Z}{\sum_{k \in W , k \to_G z'}{\ell(k \to_G z') \, x_k^*(\Hs_u[G])}}}} \right)} \\
    \times \left( \frac{\sum_{k \in W: k \to_G z}{\ell(k \to_G z) \, \psi_{k|i_r,u} \, x_k^*(\Hs_u[G])}}{\sum_{k \in W: k \to_G z}{\ell(k \to_G z) \, x_k^*(\Hs_u[G])}} \right) \,.\qquad\qquad\quad
\end{split}
\]
\label{thm:hill-moment}
\end{theorem}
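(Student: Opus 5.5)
The plan is to start from Eqn.~\ref{eq:moment} and follow the proof of Theorem~\ref{thm:hill-moment-Z} closely. The new ingredient is the final ratio factor. First, as there, restrict the outer sum from $\Zbar^r$ to $\Omega$. If $(i_1,\dotsc,i_r)\notin\Omega$, some factor $w(\Phi_{Z\cup\{i_j\}\to Z\cup\{i_{j+1}\}}(G))$ vanishes, so the whole summand is zero. The product $\prod_{j=0}^{r-1} w(\Phi_{Z\cup\{i_j\}\to Z\cup\{i_{j+1}\}}(G))/w(\Phi_Z(G))$ is then treated exactly as in Theorem~\ref{thm:hill-moment-Z}. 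Lemma~\ref{lem:hill} with source $i_j$ turns each numerator into $w(\Phi_{\{i_{j+1}\}}(\Hs_{i_j}[G]))$, and since $u\tos_G i_{j+1}$, Definition~\ref{lem:exchange-def} rewrites this as $\psi_{i_{j+1}|i_j,u}\,w(\Phi_{\{i_{j+1}\}}(\Hs_u[G]))$. The second part of Lemma~\ref{cor:hill-2} replaces $w(\Phi_Z(G))$ by the double sum over $z'\in Z$ and $k\in W$. This produces the first line of the claimed formula, still in terms of weights.

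Next, treat the final factor. Its denominator $w(\Phi_{(Z\setminus\{z\})\cup\{u\}\to Z}(G))$ is given directly by the first part of Lemma~\ref{cor:hill-2}. For the numerator $w(\Phi_{(Z\setminus\{z\})\cup\{i_r\}\to Z}(G))$, apply Eqn.~\ref{eq:span} with $B=Z$ and $i=i_r$. This gives $\sum_{k\in\Zbar:k\to_G z}\ell(k\to_G z)\,w(\Phi_{Z\cup\{i_r\}\to Z\cup\{k\}}(G))$. By Lemma~\ref{lem:hill} with source $i_r$, each term becomes $w(\Phi_{\{k\}}(\Hs_{i_r}[G]))$.

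We then restrict $k$ to $W=\Pp(u)$ in order to introduce $\psi_{k|i_r,u}$, which is only defined when $u\tos_G k$. This is the step needing care. If $k\notin W$, then $u\not\tos_G k$, and since $u\tos_G i_r$ by membership in $\Omega$, also $i_r\not\tos_G k$. Lemma~\ref{l-rst0} with source $i_r$ then gives $\Phi_{\{k\}}(\Hs_{i_r}[G])=\emptyset$, so those terms vanish. For $k\in W$, write $w(\Phi_{\{k\}}(\Hs_{i_r}[G]))=\psi_{k|i_r,u}\,w(\Phi_{\{k\}}(\Hs_u[G]))$. The final factor thus becomes the ratio of $\sum_{k\in W:k\to_G z}\ell(k\to_G z)\psi_{k|i_r,u}w(\Phi_{\{k\}}(\Hs_u[G]))$ to $\sum_{k\in W:k\to_G z}\ell(k\to_G z)w(\Phi_{\{k\}}(\Hs_u[G]))$.

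Finally, convert weights to steady-state probabilities of $\Hs_u[G]$, using Lemma~\ref{lem:hill-scc} and Eqns.~\ref{eq:rho}--\ref{eq:ss}. In the product, divide numerator and denominator of each of the $r$ factors by $\sum_{m\in\Zbar}w(\Phi_{\{m\}}(\Hs_u[G]))$. In the final ratio, divide both sums by the same normaliser; that ratio is homogeneous, so it is unchanged. This yields the stated formula. The main obstacle is the well-definedness bookkeeping in the previous paragraph, namely justifying the restriction of $k$ to $W$ so that every exchange factor appearing is defined. The remaining steps are direct substitutions of earlier results.
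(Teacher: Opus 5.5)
Your proposal is correct and takes essentially the paper's route: restrict to $\Omega$, treat the product exactly as in Theorem~\ref{thm:hill-moment-Z}, and rewrite both parts of the final ratio via Eqn.~\ref{eq:span}, Lemma~\ref{cor:hill-2} and the forest-tree lemma. You then insert $\psi_{k|i_r,u}$ and normalise to steady-state probabilities of $\Hs_u[G]$. The one difference is that you derive the numerator directly with source $i_r$ and explicitly justify restricting $k$ to $W$ using $u \tos_G i_r$ and Lemma~\ref{l-rst0}; the paper does this step implicitly by citing Lemma~\ref{cor:hill-2}, which is stated for source $u$.
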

\vspace{-1em}

\begin{proof} 
Eqns.~\ref{eq:moment-Z} and \ref{eq:moment} are both sums over the same index set of $r$-tuples, $\Omega \subseteq \Zbar^r$. Each summand in Eqn.~\ref{eq:moment} has two factors, of which the first is identical to the single factor that appears in the sum in Eqn.~\ref{eq:moment-Z}. We can, therefore, undertake the same transformation of this first factor as we did in the proof of Theorem~\ref{thm:hill-moment-Z}, which yields the corresponding factor in the statement of the present theorem. As for the second factor in Eqn.~\ref{eq:moment}, we can apply the first equation in Lemma~\ref{cor:hill-2} to the numerator and denominator to get,
\begin{equation}
    \frac{\sum_{k \in W : k \to_G z}{\ell(k \to_G z) \, w(\Phi_{\{k\}}(\Hs_{i_r}[G]))}}{\sum_{k \in W : k \to_G z}{\ell(k \to_G z) \, w(\Phi_{\{k\}}(\Hs_u[G]))}} \,.
\label{e-kgz}
\end{equation}
Because $k \in W$, we know that $u \tos_{\Hs_u[G]} k$. Hence, by Lemma \ref{lem:hill-paths}, $u \tos_G k$. So we can use Definition~\ref{lem:exchange-def} to replace $\Hs_{i_r}[G]$ by $\Hs_u[G]$ in the numerator of Eqn.~\ref{e-kgz}, to get  
\[
    \sum_{k \in W : k \to_G z}{\ell(k \to_G z) \, \psi_{k|i_r,u} \, w(\Phi_{\{k\}}(\Hs_{u}[G]))} \,.
\]
As previously, we can now replace weights by s.s.~probabilities in the rewritten second factor in Eqn.~\ref{e-kgz}, by dividing above and below by, 
\[
    \sum_{m \in \Zbar}{w(\Phi_{\{m\}}(\Hs_u[G]))} \,,
\]
and appealing to Eqns.~\ref{eq:rho} and \ref{eq:ss}. The second factor then becomes,  
\[
\left( \frac{\sum_{k \in W : k \to_G z}{\ell(k \to_G z) \,\psi_{k|i_r,u} \, x^*_k(\Hs_{u}[G])}}{\sum_{k \in W : k \to_G z}{\ell(k \to_G z) \, x^*_k(\Hs_u[G])}} \right) \,,
\]
as required in the statement of the present theorem. The result follows. 
\end{proof}

\subsection{Generalising Kac's lemma}
\label{sec:kac}

The results proved in the preceding section, particularly Theorem~\ref{thm:hill-moment-Z}, suggest a way to revisit and generalise Kac's lemma \cite{kac:47,serfozo}, which gives the mean recurrence time in terms of s.s.~probabilities. Let $G$ be a strongly connected graph, and let $u \in \Vs(G)$. Recall the unravelling operator that we introduced in \S\ref{sec:unravel}, which yields the unravelled graph, $\Us_u[G]$, with the new vertex $u^+$. (Note the change of symbol from $v$ in \S\ref{sec:unravel} to $u$ here.) In the graph $\Us_u[G]$, $\{u\}$ is the unique initial SCC and $\{u^+\}$  is the unique terminal SCC. Because its single terminal SCC is a singleton, $\Us_u[G]$ satisfies the conditions described in \S\ref{sec:hill-def} for defining FPTs, with the set of terminal vertices given by $Z = \{u^+\}$. As noted in \S\ref{sec:unravel}, the distribution of recurrence times to $u$ in $G$ is identical to the distribution of FPTs from $u$ to $u^+$ in $\Us_u[G]$. It follows that the moments of the recurrence time distribution are given by (Eqn.~\ref{eq:recur-unravel}), 
\[
    \tau_u^{(r)}(G) = \mu_{u,\{u^+\}}^{(r)}(\Us_u[G]) \,.
\]
This provides the starting point for applying Theorem \ref{thm:hill-moment-Z} to $\Us_u[G]$. 

Let us work through the different features of the expression for $\mu_{u,\{u^+\}}^{(r)}(\Us_u[G])$ given in Eqn.~\ref{e-urZ} in the statement of Theorem~\ref{thm:hill-moment-Z}. The graph that appears on the right-hand side of Eqn.~\ref{e-urZ} is now $\Hs_u[\,\Us_u[G]]$. Because Lemma~\ref{t-hug} tells us that the Hill operator is a left-inverse to the unravelling operator, this graph is just $G$ itself,  $\Hs_u[\,\Us_u[G]] = G$. As we have observed previously, the denominator in Eqn.~\ref{e-urZ} can be extricated from both the inner product and the outer sum. This denominator expression is a sum over the index sets $Z$ and $W$, where $Z = \{u^+\} \subseteq \Vs(\Us_u[G])$ and $W$, as defined in Lemma~\ref{cor:hill-2}, is a subset of $\Vs(\Hs_u[\,\Us_u[G]]) = \Vs(G)$. Since $G$ is strongly connected, $W = \Pp(u) = \Vs(G)$. The denominator expression in Eqn.~\ref{e-urZ} is therefore, 
\[
    \sum_{k \in \Vs(G) : k \to_{\Us_u[G]} u^+}{\ell(k \to_{\Us_u[G]} u^+) \, x_k^*(G)} \,.
\]
It follows from Definition~\ref{def:unravel} that $k \to_{\Us_u[G]} u^+$ if, and only if, $k \to_G u$, and that the corresponding edge labels are identical. The expression above may therefore be rewritten as, 
\[
    \sum_{k \in \Vs(G) : k \to_G u}{\ell(k \to_G u) \, x_k^*(G)} \,.
\]
A glance at Eqn.~\ref{eq:lap} shows that this latter expression is just,
\begin{equation}
    \left( \lap(G) \, \xb^*(G) \right)_u + \lambda_u(G) \, x^*_u(G) \,.
\label{e-lgxg}
\end{equation}
Since $\xb^*(G)$ is a steady state, it follows from the Laplacian master equation (Eqn.~\ref{eq:master}) that the first term in Eqn.~\ref{e-lgxg} must be zero, so that,  
\[
    \sum_{k \in \Vs(G) : k \to_G u}{\ell(k \to_G u) \, x_k^*(G)} = \lambda_u(G) \, x^*_u(G) \,.
\]
We can therefore extricate the prefactor,
\[ 
    \frac{1}{\left( \lambda_u(G) \, x^*_u(G) \right)^r} \,,
\]
from the sum of products in Eqn.~\ref{e-urZ}.

Let us now consider the remaining sum, which, by Lemma~\ref{t-hug}, we can write as
\begin{equation}
    \sum_{(i_1, \dotsc, i_r) \in \Omega}\left( \prod_{j=0}^{r-1}{\psi_{i_{j+1}|i_j,u} \, x_{i_{j+1}}^*(G)} \right) \,.
\label{eq:sum-remaining}
\end{equation}
Here, $\Omega$ is a subset of $\Zbar^r$, where $\Zbar \subseteq \Vs(\Us_u[G])$ may be identified as a set with $\Vs(G)$. We now show that the sum in Eqn.~\ref{eq:sum-remaining} may be extended to run over $\Vs(G)^r$. To see this, choose any $(i_1, \dotsc, i_r) \in \Vs(G)^r$ and consider the corresponding product that appears in Eqn.~\ref{eq:sum-remaining}, 
\begin{equation}
    \prod_{j=0}^{r-1}{\psi_{i_{j+1}|i_j,u} \, x_{i_{j+1}}^*(G)} \,.
\label{eq:prod-remaining}
\end{equation}
The exchange factors $\psi_{i_{j+1}|i_j,u}$ in Eqn.~\ref{eq:prod-remaining} arise from the graph $\Us_u[G]$ and are well-defined according to Definition~\ref{lem:exchange-def} because, since $G$ is strongly connected, $u \tos_G i_{j+1}$, and so, by virtue of Definition~\ref{def:unravel}, $u \tos_{\Us_u[G]} i_{j+1}$.  Using Definition \ref{lem:exchange-def} and Lemma \ref{t-hug}, we see that, 
\[
    \psi_{i_{j+1} \mid i_j,u} = \frac{w(\Phi_{\{i_{j+1}\}}(\Hs_{i_j}[\,\Us_u[G]]))}{w(\Phi_{\{i_{j+1}\}}(\Hs_u[\,\Us_u[G]]))}
    = \frac{w(\Phi_{\{i_{j+1}\}}(\Hs_{i_j}[\,\Us_u[G]]))}{w(\Phi_{\{i_{j+1}\}}(G))}\,.
\]
According to Lemma \ref{l-rst0}, $\Phi_{\{i_{j+1}\}}(\Hs_{i_j}[\,\Us_u[G]])$ is non-empty, and so $w(\Phi_{\{i_{j+1}\}}(\Hs_{i_j}[\,\Us_u[G]])) \not= 0$, if, and only if, $i_j \tos_{\Us_u[G]} i_{j+1}$. But $(i_1, \dotsc, i_r) \in \Omega$ precisely when, 
\[ i_1 \tos_{\Us_u[G]} i_2 \tos_{\Us_u[G]} \dotsb \tos_{\Us_u[G]} i_{r-1} \tos_{\Us_u[G]} i_r \,.\]
It follows that the product in Eqn.~\ref{eq:prod-remaining} is nonzero if, and only if, $(i_1, \dotsc, i_r) \in \Omega$. Accordingly, we are at liberty in Eqn.~\ref{eq:sum-remaining} to replace the outer sum indexed over $\Omega$ by the same sum indexed over $\Vs(G)^r$. 

Let us now evaluate the exchange factors themselves. Since $\Zbar = \Vs(G)$ and $\Hs_u[\,\Us_u[G]] = G$, the exchange formula in Eqn.~\ref{e-excf} becomes in this case, 
\[
1 + \sum_{m \in \Vs(G) \setminus \{i_{j+1}\}}\left( \frac{w(\Phi_{\{u,i_{j+1}\} \to \{m,i_{j+1}\}}(G)) - w(\Phi_{\{i_j,i_{j+1}\} \to \{m,i_{j+1}\}}(G))}{w(\Phi_{\{i_{j+1}\}}(G))} \right) \lambda_{m,\{u^+\}} \,.
\]
The corner cases that annoyed us in the derivation of Lemma~\ref{lem:exchange} can rear their heads again here, if $i_j = u$, $i_{j+1} = u$ or $i_j = i_{j+1}$. However, if the reader has exercised due diligence with the exercise that was suggested during that derivation, it will be found that these cases can be absorbed into the exchange formula as written above. The last factor in the exchange formula is given by
\[ \lambda_{m,\{u^+\}} = \begin{cases}
    \ell(m \to_{\Us_u[G]} u^+) & \text{if $m \to_{{\cal U}_u[G]} u^+$} \\
    0 & \text{otherwise} \,.
\end{cases}
\]
But, once again, $m \ra_{\Us_u[G]} u^+$ if, and only if, $m \ra_G u$, and the corresponding labels are identical. We can therefore rewrite the sum in the exchange factor as,
\[
\sum_{\substack{m \in \Vs(G) \setminus \{i_{j+1}\} \\ m \to_G u}}\left( \frac{w(\Phi_{\{u,i_{j+1}\} \to \{m,i_{j+1}\}}(G)) - w(\Phi_{\{i_j,i_{j+1}\} \to \{m,i_{j+1}\}}(G))}{w(\Phi_{\{i_{j+1}\}}(G))} \right) \ell(m \to_G u) \,.
\]
Putting the pieces together leads to the following result.

\vspace{0.5em}
\begin{theorem}[Generalised Kac lemma]
Let $G$ be a strongly connected graph and choose $u \in \Vs(G)$. The $r$-th moment of the recurrence time to $u$ is given by,
\[
    \tau^{(r)}_u(G) = \frac{r!}{\left( \lambda_u(G) \, x_u^*(G) \right)^r} \sum_{(i_1, \dotsc, i_r) \in \Vs(G)^r}{\left( \prod_{j=0}^{r-1}{\psi_{i_{j+1}|i_j,u} \, x_{i_{j+1}}^*(G)} \right)} \,,
\]
where the exchange factor takes the form,
\begin{align*}
    & \psi_{i_{j+1}|i_j,u} = 1 \,\, + \\
    & \sum_{\substack{m \in \Vs(G) \setminus \{i_{j+1}\} \\ m \to_G u}}\left( \frac{w(\Phi_{\{u,i_{j+1}\} \to \{m,i_{j+1}\}}(G)) - w(\Phi_{\{i_j,i_{j+1}\} \to \{m,i_{j+1}\}}(G))}{w(\Phi_{\{i_{j+1}\}}(G))} \right) \ell(m \to_G u) \,.
\end{align*}
\label{thm:kac-moment}
\end{theorem}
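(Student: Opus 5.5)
The plan is to assemble the pieces already derived in this section into a single chain of equalities. I will start from Eqn.~\ref{eq:recur-unravel}, which gives $\tau^{(r)}_u(G) = \mu^{(r)}_{u,\{u^+\}}(\Us_u[G])$, and apply Theorem~\ref{thm:hill-moment-Z} to the graph $\Us_u[G]$ with source $u$ and $Z = \{u^+\}$. This is legitimate because $\{u^+\}$ is the unique terminal SCC and is a singleton. Lemma~\ref{t-hug} then replaces every occurrence of $\Hs_u[\,\Us_u[G]]$ by $G$, so the s.s.~probabilities in Eqn.~\ref{e-urZ} become $x^*_k(G)$.

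Next I will treat the denominator of Eqn.~\ref{e-urZ}. Since $G$ is strongly connected, $W = \Vs(G)$. By Definition~\ref{def:unravel}, the edges $k \to_{\Us_u[G]} u^+$ correspond bijectively, with equal labels, to the edges $k \to_G u$. The denominator is therefore $\sum_{k \to_G u} \ell(k\to_G u)\,x^*_k(G)$. Using Eqn.~\ref{eq:lap} and the steady-state condition $\Ls(G)\xb^*(G) = \0$, this equals $\lambda_u(G)\,x^*_u(G)$. Because it is independent of the summation indices, I will factor it out as $(\lambda_u(G)x^*_u(G))^{-r}$.

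I then need to enlarge the index set from $\Omega$ to $\Vs(G)^r$. Each $\psi_{i_{j+1}|i_j,u}$ is well defined for every tuple: strong connectivity gives $u \tos_G i_{j+1}$, and hence $u \tos_{\Us_u[G]} i_{j+1}$ unless $i_{j+1} = u$, which is trivial. The numerator of $\psi_{i_{j+1}|i_j,u}$ vanishes exactly when $i_j \not\tos_{\Us_u[G]} i_{j+1}$, by Lemma~\ref{l-rst0}. So the terms with tuples outside $\Omega$ contribute zero, and the sum may run over all of $\Vs(G)^r$.

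Finally, I will evaluate each exchange factor with Lemma~\ref{lem:exchange} applied to $\Us_u[G]$, with $\Hs_u[\,\Us_u[G]] = G$, so the forests are forests of $G$ and $\Zbar = \Vs(G)$. Here $\lambda_{m,\{u^+\}} = \ell(m \to_G u)$ when $m \to_G u$ and is $0$ otherwise, which restricts the sum to $m \to_G u$ and gives the stated formula.

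I expect the main obstacle to be the corner cases $i_j = u$, $i_{j+1} = u$ and $i_j = i_{j+1}$, which the derivation of Lemma~\ref{lem:exchange} deferred. For each one I would recheck the matrix determinant lemma computation directly:
\begin{itemize}
\item the vector $(\deltab_u - \deltab_v)_{[Y]}$ is zero when $u = v$;
\item it loses its $-1$ entry when $v = k$;
\item it loses its $+1$ entry when $u = k$.
\end{itemize}
In each case I would adopt the convention that $\Phi_{\{k,v\}\to\{k,m\}}$ is empty when $v = k$, and confirm that the general formula still reproduces the result. One further point needs care: when $i_{j+1} = u$, Lemma~\ref{l-rst0} is applied in $\Us_u[G]$. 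There no vertex other than $u$ leads to $u$, so the factor $\psi_{u|i_j,u}$ vanishes unless $i_j = u$. This agrees with the vanishing of the numerator, so the extension to $\Vs(G)^r$ remains consistent.
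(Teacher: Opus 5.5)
Your proposal is correct and follows essentially the same route as the paper. It starts from Eqn.~\ref{eq:recur-unravel} and Theorem~\ref{thm:hill-moment-Z} applied to $\Us_u[G]$, uses Lemma~\ref{t-hug} to replace $\Hs_u[\,\Us_u[G]]$ by $G$, and reduces the denominator to $\lambda_u(G)\,x^*_u(G)$ via the steady-state condition. It then extends the sum from $\Omega$ to $\Vs(G)^r$ via Lemma~\ref{l-rst0}, and evaluates the exchange factors with Lemma~\ref{lem:exchange} using $\lambda_{m,\{u^+\}} = \ell(m \to_G u)$. Your explicit handling of the corner cases, including the check that the formula gives $\psi_{u|i_j,u} = 0$ for $i_j \neq u$, fills in the exercise the paper leaves to the reader.
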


The unravelling operator and the Hill operator were essential for the proof of Theorem~\ref{thm:kac-moment} but do not appear in the statement of the theorem, in which all quantities arise solely from the graph $G$. 

If we set $r = 1$ in Theorem~\ref{thm:kac-moment} and recall that $i_0 = u$, the exchange factor in the formula for $\tau^{(1)}_u(G)$ becomes $\psi_{i_1|u,u} = 1$ (Definition~\ref{lem:exchange-def}). The sum therefore reduces to the total s.s. probability, $\sum_{i_1 \in \Vs(G)}{x_{i_1}^*(G)} = 1$. It follows that, 
\begin{equation}
    \tau^{(1)}_u(G) = \frac{1}{\lambda_u(G) \, x_u^*(G)} \,,
\label{eq:kac-mean}
\end{equation}
which is Kac's formula for the mean recurrence time \cite{kac:47}, in the version appropriate for continuous-time Markov processes \cite{serfozo}. 

It is helpful to note one corner case in which the exchange factor, $\psi_{i_{j+1} \mid i_{j}, u}$, in Theorem~\ref{thm:kac-moment} simplifies dramatically. If $i_{j+1} = u \not= i_j$, then,
\begin{equation}
    \psi_{i_{j+1}|i_j,u} = \psi_{u \mid i_j, u} = 0 \,.
\label{eq-uiju}
\end{equation}
To see why, note that $\Phi_{\{u,i_{j+1}\} \to \{m,i_{j+1}\}}(G) = \Phi_{\{u,u\} \to \{m,u\}}(G)$ is improperly defined since $\{u,u\}$ has size $1$ and $\{m,u\}$ has size 2.
Hence, $w(\Phi_{\{u,i_{j+1}\} \to \{m,i_{j+1}\}}(G)) = 0$. The formula for $\psi_{i_{j+1} \mid i_{j}, u}$ then simplifies to,
\[
    \psi_{u|i_j,u} = 1 - \sum_{\substack{m \in \Vs(G) \setminus \{u\} \\ m \to_G u}}\left( \frac{w(\Phi_{\{i_j,u\} \to \{m,u\}}(G))}{w(\Phi_{\{u\}}(G))} \right) \ell(m \to u) \,.
\]
Applying Eqn.~\ref{eq:span} with $B = \{u\}$, $z = u$, and $i = i_j$, for which we need $u \not= i_j$, we see that, 
\[
    \sum_{\substack{m \in \Vs(G) \setminus \{u\} \\ m \to_G u}}{w(\Phi_{\{i_j,u\} \to \{m,u\}}(G)) \, \ell(m \to u)}
    = w(\Phi_{\{i_j\} \to \{u\}}(G)) = w(\Phi_{\{u\}}(G)) \,,
\]
from which it follows that, as claimed,
\[
    \psi_{u \mid i_j, u} = 1 - \frac{w(\Phi_{\{u\}}(G))}{w(\Phi_{\{u\}}(G))} = 0 \,.
\]

One feature of the new results presented in this paper, including Theorem~\ref{thm:kac-moment}, is that the rational algebraic structure of the calculated quantities is not immediately visible. This is not an issue for the quantities calculated in Theorems~\ref{thm:hill-split}, \ref{thm:hill-moment-Z} and \ref{thm:hill-moment} because the original versions of these results from our prequel paper, which are reproduced here, respectively, as Eqns.~\ref{eq:split}, \ref{eq:moment-Z} and \ref{eq:moment}, make it straightforward to work out several aspects of the rational structure of the corresponding quantities. But Theorem~\ref{thm:kac-moment} is new and does not have an original version, so it may be helpful to note some easy deductions from the formulas in Theorem~\ref{thm:kac-moment}. First, it follows from Eqns.~\ref{eq:rho} and \ref{eq:ss} that the common denominator of the rational functions for $x^*_u(G)$ and $x^*_{i_{j+1}}(G)$ will cancel out in the formula for $\tau^{(r)}_u(G)$, leaving the polynomial $w(\Phi_{\{u\}}(G))^r$ in the denominator of the prefactor and the polynomial $w(\Phi_{\{i_{j+1}\}}(G))$ in the numerator of the inner term. The latter can then be cancelled with the denominator of the exchange factor, $\psi_{i_{j+1} \mid i_j,u}$. This leaves a polynomial in the inner term, whose total degree is $N-1$, assuming that $\#\Vs(G) = N$. It follows that,
\begin{equation}
    \tau^{(r)}_u(G) = \left( \frac{r!}{\left( \lambda_u(G) \, w(\Phi_u(G)) \right)^r} \right) P \,,
\label{e-trug}
\end{equation}
where $P$ is a polynomial in the edge labels whose monomials all have total degree $r \left( N-1 \right)$. $\tau^{(r)}_u(G)$ is therefore a rational algebraic function with total degree $r \left( N-1 \right)$ in the numerator and $rN$ in the denominator.

\subsection{An example}
\label{sec:example}

In our prequel paper \cite{nam:25}, we considered examples of \textit{pipeline graphs} \cite[Fig.~4]{nam:25}, one of which was also treated by Hill \cite{hill:88}, and also a \textit{butterfly graph} \cite{nam:25}, for which we calculated various transient quantities---mean unconditional FPTs, mean conditional FPTs, splitting probabilities---using the formulas introduced in our prequel paper. The reader may like to try out the formulas in the present paper on these same examples. To offer something different here, we consider the \textit{cycle graph}, $K$, of five vertices with a cycle of reversible edges, shown in Fig.~\ref{fig:example}A, for which we calculate the first and second moments, $\tau_1^{(1)}(K)$ and $\tau_1^{(2)}(K)$, of the recurrence time to vertex $u = 1$, using the generalised Kac lemma in Theorem~\ref{thm:kac-moment}. The recurrence time formulas for the higher moments are novel results, to the best of our knowledge, so it seems appropriate to see in more detail how they can be calculated on an example. 

Cycle graphs are reversible pipeline graphs whose ends have been brought together. These kinds of Markov processes have been widely used to study single-molecule enzyme kinetics \cite{kou:05, shaevitz:05, moffitt:10a, moffitt:14}. Cycle and pipeline graphs are particularly simple in structure and the combinatorial explosion that arises in enumerating spanning forests can be handled more readily in them \cite{nam:23}. Nevertheless, the algebraic calculations needed for the generalised Kac lemma rapidly become very tedious. This provides an opportunity to examine the Chebotarev--Agaev recurrence \cite{chebotarev:02}, which offers a systematic way to undertake the algebra without enumerating the forests. We have mentioned this approach previously, as one strategy for confronting the algebraic complexity associated with spanning forests \cite{nam:23}, but we have not previously had an opportunity to show it at work, which we do here. 

\begin{figure}
\centering
\includegraphics[trim={1.7in 4.2in 1.7in 3.8in}, clip, width=0.9\textwidth]{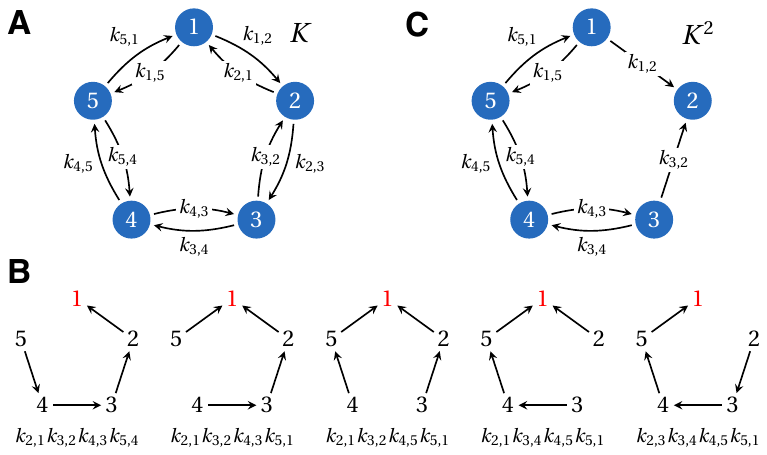}
\caption{(\textbf{A}) A cycle graph, $K$, on five vertices.
(\textbf{B}) The spanning trees of $K$ rooted at $1$ (red) and their weights.
(\textbf{C}) The graph $K^2$ discussed in the text, in which the edges leaving vertex $2$ in $K$ have been removed.
}
\label{fig:example}
\end{figure}

As discussed after Theorem \ref{thm:kac-moment}, the mean recurrence time is given by Kac's lemma (Eqn.~\ref{eq:kac-mean}), which in this case leads to, 
\begin{equation}
    \tau_1^{(1)} = \frac{1}{\left( k_{1,2} + k_{1,5} \right) x_1^*} \,.
\label{e-t1K}
\end{equation}
We will typically omit mention of the graph $K$ in such expressions, for brevity. To calculate the s.s.~probability, $x_1^*$, we can use Eqn.~\ref{eq:ss}, for which we require the spanning trees of $K$. In this case, $K$ is strongly connected and it is easy to see that, for any vertex $i$, there are five spanning trees rooted at $i$, as shown in Fig.~\ref{fig:example}B for $i = 1$. The corresponding weight is then given by, 
\begin{equation}
\begin{aligned}
    w(\Phi_{\{1\}}) & = k_{2,1} k_{3,2} k_{4,3} k_{5,4} + k_{2,1} k_{3,2} k_{4,3} k_{5,1}
    + k_{2,1} k_{3,2} k_{4,5} k_{5,1} \\
    & \qquad + k_{2,1} k_{3,4} k_{4,5} k_{5,1} + k_{2,3} k_{3,4} k_{4,5} k_{5,1} \,.
\end{aligned}
\label{eq:ex-w1}
\end{equation}
It is straightforward to enumerate the remaining spanning trees of $K$, whose weights are given by, 
\begin{equation}
\begin{aligned}
    w(\Phi_{\{2\}}) & = k_{1,5} k_{3,2} k_{4,3} k_{5,4} + k_{1,2} k_{3,2} k_{4,3} k_{5,4}
    + k_{1,2} k_{3,2} k_{4,3} k_{5,1} \\
    & \qquad + k_{1,2} k_{3,2} k_{4,5} k_{5,1} + k_{1,2} k_{3,4} k_{4,5} k_{5,1} \\
    w(\Phi_{\{3\}}) & = k_{1,5} k_{2,1} k_{4,3} k_{5,4} + k_{1,5} k_{2,3} k_{4,3} k_{5,4}
    + k_{1,2} k_{2,3} k_{4,3} k_{5,4} \\
    & \qquad + k_{1,2} k_{2,3} k_{4,3} k_{5,1} + k_{1,2} k_{2,3} k_{4,5} k_{5,1} \\
    w(\Phi_{\{4\}}) & = k_{1,5} k_{2,1} k_{3,2} k_{5,4} + k_{1,5} k_{2,1} k_{3,4} k_{5,4}
    + k_{1,5} k_{2,3} k_{3,4} k_{5,4} \\
    & \qquad + k_{1,2} k_{2,3} k_{3,4} k_{5,4} + k_{1,2} k_{2,3} k_{3,4} k_{5,1} \\
    w(\Phi_{\{5\}}) & = k_{1,5} k_{2,1} k_{3,2} k_{4,3} + k_{1,5} k_{2,1} k_{3,2} k_{4,5}
    + k_{1,5} k_{2,1} k_{3,4} k_{4,5} \\
    & \qquad + k_{1,5} k_{2,3} k_{3,4} k_{4,5} + k_{1,2} k_{2,3} k_{3,4} k_{4,5} \,.
\end{aligned}
\label{eq:ex-trees}
\end{equation}
We can now use Eqns.~\ref{eq:rho} and \ref{eq:ss} to calculate $x_1^*$ as, 
\[
    x_1^* = \frac{w(\Phi_{\{1\}})}{w(\Phi_{\{1\}}) + w(\Phi_{\{2\}}) + w(\Phi_{\{3\}}) + w(\Phi_{\{4\}}) + w(\Phi_{\{5\}})} \,,
\]
which can then be used to calculate $\tau_1^{(1)}$ in Eqn.~\ref{e-t1K}. The complete expression is shown in Appendix~\ref{app:example}. 

For the second moment, Theorem \ref{thm:kac-moment} tells us that,
\[
    \tau_1^{(2)} = \frac{2}{\left( k_{1,2} + k_{1,5} \right)^2 \left( x_1^* \right)^2}
    \sum_{(i_1,i_2) \in \Vs(K)^2}{\left( \psi_{i_1 \mid i_0,1} \, x_{i_1}^* \right) \cdot \left( \psi_{i_2 \mid i_1,1} \, x_{i_2}^*\right)} \,.
\]
This expression depends not only on $x_1^*$, but also on the s.s.~probability of every other vertex in $K$. We can use the spanning tree weights in Eqns.~\ref{eq:ex-w1} and \ref{eq:ex-trees} to calculate these probabilities, as we did above for $x_1^*$, using Eqns.~\ref{eq:rho} and \ref{eq:ss}. What remains to be calculated are the exchange factors, which require more work. Since $i_0 = u = 1$, Definition \ref{lem:exchange-def} tells us that $\psi_{i_1 \mid i_0,1} = \psi_{i_1 \mid 1,1} = 1$ for each $i_1 \in \Vs(K)$. Therefore, we have,
\begin{equation}
    \tau_1^{(2)} = \frac{2}{\left( k_{1,2} + k_{1,5} \right)^2 \left( x_1^* \right)^2}
    \sum_{(i_1,i_2) \in \Vs(K)^2}{\psi_{i_2 \mid i_1,1} \, x_{i_1}^* \, x_{i_2}^*} \,.
\label{eq:ex-tau2}
\end{equation}
Theorem \ref{thm:kac-moment} tells us that the exchange factor, $\psi_{i_2 \mid i_1,1}$ is given by, 
\begin{equation}
    \psi_{i_2|i_1,1} = 1 + \sum_{\substack{m \in \Vs(K) \setminus \{i_2\} \\ m \to_K 1}}\left( \frac{w(\Phi_{\{1,i_2\} \to \{m,i_2\}}) - w(\Phi_{\{i_1,i_2\} \to \{m,i_2\}})}{w(\Phi_{\{i_2\}})} \right) \ell(m \to 1) \,.
\label{eq:ex-psi12}
\end{equation}
If $i_1 = 1$, then it follows from Definition~\ref{lem:exchange-def} that, for any $i_2$, $\psi_{i_2 \mid 1,1} = 1$. If $i_2 = 1$, it follows from Eqn.~\ref{eq-uiju} that, provided $i_1 \not= 1$, then $\psi_{1 \mid i_1, 1} = 0$. 

To calculate the remaining 16 exchange factors, for $i_1, i_2 \neq 1$, we must calculate the spanning forest weights in the numerator of Eqn.~\ref{eq:ex-psi12}. To do so, we use the recursive procedure developed in \cite{chebotarev:02}. Interested readers should note that the ``Laplacian matrix'' defined in \cite{chebotarev:02} is the negative transpose, $\Lb(G) = -\lap(G)^{\T}$, of the Laplacian matrix used here, as in Eqn.~\ref{eq:lap-t}.

Let $G$ be a graph with vertices $\Vs(G) = \left\{ 1, \dotsc, N \right\}$. For $k = 0, 1, 2, \dotsc$, let $\Qb^{(k)}(G)$ denote the $N \times N$ matrix whose $(i,j)$-th entry is the total weight of those spanning forests, $F$, of $G$ with exactly $k$ edges, in which $j$ is a root and $i \leadsto_F j$. More formally, recalling that a forest with $k$ edges on $N$ vertices must have $N-k$ roots, if $i \not= j$, then, 
\begin{equation}
    \Qb_{i,j}^{(k)}(G) = \sum_{\substack{A \subseteq \Vs(G),\, \#A = N-k \\ j \in A, \, i \not\in A}} w(\Phi_{(A \setminus \{j\}) \cup \{i\} \to A} (G)) \,,
\label{eq:qij}
\end{equation}
whereas, if $i = j$, then every forest $F$ with $N-k$ roots, of which $j$ is one, trivially satisfies the requirement that $i \leadsto_F j$, so that,
\begin{equation}
    \Qb_{j,j}^{(k)}(G) = \sum_{\substack{A \subseteq \Vs(G),\, \#A = N-k \\ j \in A}} w(\Phi_{A} (G)) \,.
\label{eq:qij2}
\end{equation}
The $\Qb^{(k)}$ notation used here is slightly different from that in \cite{chebotarev:02} but the relationship should be clear. 

To get some intuition for the $\Qb^{(k)}$ matrices, let us consider some particular values of $k$. For any graph $G$, there is a maximum, $d$, to the number of edges in any spanning forest of $G$. If $G$ is strongly connected, then $d = N-1$, with the maximum number of edges being found in any spanning tree of $G$. Since there can be no forests with more edges than $d$, it follows that $\Qb^{(k)}(G) = \mathbf{0}$ when $k > d$. At the other extreme, if $k = 0$, there is only one forest, $F$, in which all vertices are roots and there are no edges. In this case, $i \leadsto_F j$ if, and only if, $i = j$. The convention for empty products tells us that the weight of this forest is $1$. Hence, 
\begin{equation}
    \Qb^{(0)}(G) = \Ib \,.
\label{eq:recur-init}
\end{equation}
Now consider the case $k = 1$, which is more informative. The forests with $N - 1$ roots are in one-to-one correspondence with the edges, $i \to j$, of $G$, with $i$ being the only non-root in the forest. Hence, if $i \not= j$, it follows from Eqn.~\ref{eq:lap} that,  
\begin{equation}
    \Qb_{i,j}^{(1)}(G) = 
    \begin{cases}
        \ell(i \to j) = \lap(G)_{j,i} & \text{if $i \to_G j$} \\
        0 = \lap(G)_{j,i} & \text{if $i \not\to_G j$} \,, \\
    \end{cases}
\label{e-qbij}
\end{equation}
so that, off the diagonal, $\Qb^{(1)}(G) = \lap(G)^{\T}$. On the diagonal, when $i = j$, any forest with $N - 1$ roots in which $j$ is a root contributes to the sum in Eqn.~\ref{eq:qij2}. The only forests that do not contribute are those in which $j$ is the source vertex for some edge, $j \to_G k$, and is therefore a non-root. The weight of these excluded forests is $\lambda_j = -\lap(G)_{j,j}$, using the notation introduced in Eqn.~\ref{eq:lambda}. Meanwhile, the weight of all the forests with $N - 1$ roots, or, equivalently, the sum of all the edge labels in $G$, is just $\lambda_1 + \cdots + \lambda_N = -\tr{\lap(G)}$. Putting this together with Eqn.~\ref{e-qbij}, we see that,
\begin{equation} 
\Qb^{(1)}(G) = \lap(G)^{\T} - (\tr{\lap(G)^{\T}}) \, \Ib \,.
\label{e-qb1}
\end{equation}

With these preliminary calculations in hand, it should not be too surprising that a recurrence emerges for the $\Qb^{(k)}$ matrices \cite[Proposition 4]{chebotarev:02},
\begin{equation}
    \Qb^{(k+1)}(G) = \lap(G)^{\T} \, \Qb^{(k)}(G) - \left( \frac{\tr{\left( \lap(G)^{\T} \, \Qb^{(k)}(G) \right)}}{k + 1} \right) \Ib \,,
\label{eq:recur}
\end{equation}
whose logic should be reasonably clear. Using Eqn.~\ref{eq:recur} with Eqn.~\ref{eq:recur-init} as the initial condition yields Eqn.~\ref{e-qb1} after one recursion. 

The $\Qb^{(k)}(G)$ matrices are shown in \cite{chebotarev:02} to determine several properties of the Laplacian matrix, $\lap(G)$, such as its Moore--Penrose inverse. For our purposes here, the salient point is that the entries of $\Qb^{(k)}(G)$ give the weights of spanning forests of $G$ without enumerating the forests, which rapidly becomes intractable. 

Let us now return to our formula for the exchange factor $\psi_{i_2 \mid i_1, 1}$ of $K$ (Eqn.~\ref{eq:ex-psi12}), for which we need the weights of spanning forests with two roots, of which one root is $i_2$. We can impose the condition that $i_2$ is a root by using the following trick. For a graph $G$, with $v \in \Vs(G)$, let $G^v$ be the graph obtained from $G$ by removing all the edges outgoing from $v$. Any spanning forest of $G^v$ must have $v$ as a root and if $A, B \subseteq \Vs(G)$ with $\# A = \# B$ and $v \in A$, then, 
\[
    \Phi_{B \to A}(G) = \Phi_{B \to A}(G^v) \,.
\]
It follows that we can calculate the spanning forest weights of $K$ by calculating the $\Qb^{(k)}$ matrices for $K^{i_2}$, instead of for $K$. Since $K$ has five vertices, the required forests have three edges and so their weights will need to be extracted from the entries of $\Qb^{(3)}(K^{i_2})$. In particular, the weights needed in Eqn.~\ref{eq:ex-psi12}, with $m, u \in \Vs(K) \setminus \{i_2\}$, are given by, 
\[
    w(\Phi_{\{u,i_2\} \to \{m,i_2\}}) = \Qb_{u,m}^{(3)}(K^{i_2}) \,.
\]
Determining $\Qb_{1,m}^{(3)}(K^{i_2})$ and $\Qb_{i_1,m}^{(3)}(K^{i_2})$, for $i_1, i_2 \in \left\{ 2, \dotsc, 5 \right\}$ and $m \in \Vs(K) \setminus \{i_2\}$, will therefore give us the quantities needed for the exchange factors $\psi_{i_2 \mid i_1, 1}$ in Eqn.~\ref{eq:ex-psi12}.

To undertake this calculation, we first require the Laplacian matrices of $K^{i_2}$ for $i_2 = 2, \dotsc, 5$, which can be easily obtained from the Laplacian matrix of $K$, 
\[
    \Ls(K) = \left[ \begin{array}{ccccc}
        -k_{1,2} - k_{1,5} & k_{2,1} & 0 & 0 & k_{5,1} \\
        k_{1,2} & -k_{2,1} - k_{2,3} & k_{3,2} & 0 & 0 \\
        0 & k_{2,3} & -k_{3,2} - k_{3,4} & k_{4,3} & 0 \\
        0 & 0 & k_{3,4} & -k_{4,3} - k_{4,5} & k_{5,4} \\
        k_{1,5} & 0 & 0 & k_{4,5} & -k_{5,1} - k_{5,4}
    \end{array} \right] \,,
\]
by zeroing out column $i_2$. We can now recursively apply Eqn.~\ref{eq:recur}, starting from $\Qb^{(0)}(K^{i_2}) = \Ib$. Although the calculations can be done by hand in this case, it is easier, and less error-prone, to implement it with a symbolic mathematics package, like SymPy, as we did for the results shown below. We outline the calculation for $i_2 = 2$, for which the graph $K^2$ is shown in Fig.~\ref{fig:example}C. The transpose of the Laplacian matrix, as needed for Eqn.~\ref{eq:recur}, is then given by,
\begin{equation}
    \Ls(K^2)^{\T} = \begin{bmatrix}
        -k_{1,2} - k_{1,5} & k_{1,2} & 0 & 0 & k_{1,5} \\
        0 & 0 & 0 & 0 & 0 \\
        0 & k_{3,2} & -k_{3,2} - k_{3,4} & k_{3,4} & 0 \\
        0 & 0 & k_{4,3} & -k_{4,3} - k_{4,5} & k_{4,5} \\
        k_{5,1} & 0 & 0 & k_{5,4} & -k_{5,1} - k_{5,4} \\
    \end{bmatrix} \,.
\label{e-lsk2}
\end{equation}
Applying Eqn.~\ref{eq:recur} once, or Eqn.~\ref{e-qb1} directly, gives $\Qb^{(1)}(K^2)$, 
\[ \begin{bmatrix}
        -k_{1,2} - k_{1,5} + U & k_{1,2} & 0 & 0 & k_{1,5} \\
        0 & U & 0 & 0 & 0 \\
        0 & k_{3,2} & -k_{3,2} - k_{3,4} + U & k_{3,4} & 0 \\
        0 & 0 & k_{4,3} & -k_{4,3} - k_{4,5} + U & k_{4,5} \\
        k_{5,1} & 0 & 0 & k_{5,4} & -k_{5,1} - k_{5,4} + U 
\end{bmatrix} \,, \]
where $U = -\tr{\Ls(K^2)^{\T}}$ is the sum of all edge labels in $K^2$,
\begin{align*}
    U & = \lambda_1 + \lambda_3 + \lambda_4 + \lambda_5 \\
    & = k_{1,2} + k_{1,5} + k_{3,2} + k_{3,4} + k_{4,3} + k_{4,5} + k_{5,1} + k_{5,4} \,.
\end{align*}
Applying Eqn.~\ref{eq:recur} again yields a more complicated expression for $\Qb^{(2)}(K^2)$, 
\begin{equation}
    \begin{bmatrix}
         V_1 & k_{1,2} \left( \lambda_3 + \lambda_4 + \lambda_5 \right) & 0 & k_{1,5} k_{5,4} & k_{1,5} \left( \lambda_3 + \lambda_4 \right) \\
         0 & V_2 & 0 & 0 & 0 \\
         0 & k_{3,2} \left( \lambda_1 + \lambda_4 + \lambda_5 \right) & V_3 & k_{3,4} \left( \lambda_1 + \lambda_5 \right) & k_{3,4} k_{4,5} \\
         k_{4,5} k_{5,1} & k_{3,2} k_{4,3} & k_{4,3} \left( \lambda_1 + \lambda_5 \right) & V_4 & k_{4,5} \left( \lambda_1 + \lambda_3 \right) \\
         k_{5,1} \left( \lambda_3 + \lambda_4 \right) & k_{1,2} k_{5,1} & k_{4,3} k_{5,4} & k_{5,4} \left( \lambda_1 + \lambda_3 \right) & V_5
    \end{bmatrix} 
\label{e-v1k}
\end{equation}
where the diagonal entries are given by, 
\begin{align*}
    V_1 & = k_{3,2} k_{4,3} + k_{3,2} k_{4,5} + k_{3,2} k_{5,1} + k_{3,2} k_{5,4} + k_{3,4} k_{4,5} + k_{3,4} k_{5,1} + k_{3,4} k_{5,4} + k_{4,3} k_{5,1} \\
    & \qquad + k_{4,3} k_{5,4} + k_{4,5} k_{5,1} \\
    V_2 & = k_{1,2} k_{3,2} + k_{1,2} k_{3,4} + k_{1,2} k_{4,3} + k_{1,2} k_{4,5} + k_{1,2} k_{5,1} + k_{1,2} k_{5,4} + k_{1,5} k_{3,2} + k_{1,5} k_{3,4} \\
    & \qquad + k_{1,5} k_{4,3} + k_{1,5} k_{4,5} + k_{1,5} k_{5,4} + k_{3,2} k_{4,3} + k_{3,2} k_{4,5} + k_{3,2} k_{5,1} + k_{3,2} k_{5,4}  \\
    & \qquad + k_{3,4} k_{4,5} + k_{3,4} k_{5,1} + k_{3,4} k_{5,4} + k_{4,3} k_{5,1} + k_{4,3} k_{5,4} + k_{4,5} k_{5,1} \\
    V_3 & = k_{1,2} k_{4,3} + k_{1,2} k_{4,5} + k_{1,2} k_{5,1} + k_{1,2} k_{5,4} + k_{1,5} k_{4,3} + k_{1,5} k_{4,5} + k_{1,5} k_{5,4} + k_{4,3} k_{5,1} \\
    & \qquad + k_{4,3} k_{5,4} + k_{4,5} k_{5,1} \\
    V_4 & = k_{1,2} k_{3,2} + k_{1,2} k_{3,4} + k_{1,2} k_{5,1} + k_{1,2} k_{5,4} + k_{1,5} k_{3,2} + k_{1,5} k_{3,4} + k_{1,5} k_{5,4} + k_{3,2} k_{5,1} \\
    & \qquad + k_{3,2} k_{5,4} + k_{3,4} k_{5,1} + k_{3,4} k_{5,4} \\
    V_5 & = k_{1,2} k_{3,2} + k_{1,2} k_{3,4} + k_{1,2} k_{4,3} + k_{1,2} k_{4,5} + k_{1,5} k_{3,2} + k_{1,5} k_{3,4} + k_{1,5} k_{4,3} + k_{1,5} k_{4,5} \\
    & \qquad + k_{3,2} k_{4,3} + k_{3,2} k_{4,5} + k_{3,4} k_{4,5}.
\end{align*}
It is instructive to check one of the entries in Eqn.~\ref{e-v1k} against Eqns.~\ref{eq:qij} and \ref{eq:qij2}. According to Eqn.~\ref{eq:qij}, the $(1,5)$-th entry is the weight of all triply-rooted spanning forests of $K^2$ in which $5$ is a root, $1$ is not a root, and $1 \leadsto 5$. Since $2$ is always a root in a spanning forest of $K^2$, the possible sets of roots are $\{ 2, 3, 5 \}$ and $\{ 2, 4, 5 \}$. A glance at Fig.~\ref{fig:example}C shows that, in any spanning forest rooted at $\{2, 3, 5\}$ in which $1 \tos 5$, the edge $1 \to 5$ must always be present, and there is then a choice between including either $4 \to 3$ or $4 \to 5$. Since these are the outgoing edges from vertex $4$, the contributions of these forests to the weight is $k_{1,5} \lambda_4$. Similarly, in any forest rooted at $\{2, 4, 5\}$ in which $1 \tos 5$, the edge $1 \to 5$ must always be present, and the remaining edge must be chosen from those outgoing from vertex $3$. The contribution of these forests to the weight is then $k_{1,5} \lambda_3$. Putting these two contributions together gives the $(1,5)$-th entry in Eqn.~\ref{e-v1k}.

Applying Eqn.~\ref{eq:recur} again yields a yet more complicated expression for $\Qb^{(3)}(K^2)$. However, we do not need the full matrix but just those entries required to calculate $\psi_{2 \mid i_1,1}$ in Eqn.~\ref{eq:ex-psi12}. For this, the only relevant value of $m$ in Eqn.~\ref{eq:ex-psi12} is $m = 5$, for which we need $\Qb^{(3)}_{1,5}(K^2)$ and $\Qb^{(3)}_{i_1,5}(K^2)$, for $i_1 = 2, \dotsc, 5$. That is, we need the last column of $\Qb^{(3)}(K^2)$. We can work this out by hand from Eqns.~\ref{e-lsk2} and \ref{e-v1k} using Eqn.~\ref{eq:recur} and, after some simplification, we find that, 
\begin{align*}
    \Qb^{(3)}_{1,5}(K^2)
    & = k_{1,5} k_{3,2} k_{4,3} + k_{1,5} k_{3,2} k_{4,5} + k_{1,5} k_{3,4} k_{4,5} \\
    \Qb^{(3)}_{2,5}(K^2)
    & = 0 \\
    \Qb^{(3)}_{3,5}(K^2)
    & = k_{1,2} k_{3,4} k_{4,5} + k_{1,5} k_{3,4} k_{4,5} \\
    \Qb^{(3)}_{4,5}(K^2) 
    & = k_{1,2} k_{3,2} k_{4,5} + k_{1,2} k_{3,4} k_{4,5} + k_{1,5} k_{3,2} k_{4,5} + k_{1,5} k_{3,4} k_{4,5} \\
    \Qb^{(3)}_{5,5}(K^2)
    & = k_{1,2} k_{3,2} k_{4,3} + k_{1,2} k_{3,2} k_{4,5} + k_{1,2} k_{3,4} k_{4,5} + k_{1,5} k_{3,2} k_{4,3} + k_{1,5} k_{3,2} k_{4,5} \\
    & \qquad + k_{1,5} k_{3,4} k_{4,5} \,.
\end{align*}
It is again instructive to check one of these entries against Eqns.~\ref{eq:qij} and \ref{eq:qij2}. For instance, $\Qb^{(3)}_{3,5}(K^2)$ should be the weight of all doubly-rooted spanning forests of $K^2$ in which $5$ is a root, $3$ is not a root, and $3 \leadsto 5$. Since $2$ is always a root in any spanning forest of $K^2$, the only possible set of roots for such a forest is $\{ 2, 5 \}$, and the only way to have $3 \leadsto 5$ is if the forest includes both the edges $3 \to 4$ and $4 \to 5$. The remaining edge must then be either one of the two outgoing edges from $1$, so that the weight is $k_{3,4}k_{4,5}\lambda_1$, as shown above. 

We can now calculate the exchange factors using Eqn.~\ref{eq:ex-psi12}. For $i_2 = 2$, we can substitute the expressions obtained above to get,
\begin{align*}
    \psi_{2 \mid 2,1} & = 1 + \left( \frac{k_{1,5} k_{3,2} k_{4,3} + k_{1,5} k_{3,2} k_{4,5} + k_{1,5} k_{3,4} k_{4,5}}{w(\Phi_{\{2\}})} \right) k_{5,1} \\
    \psi_{2 \mid 3,1} & = 1 + \left( \frac{k_{1,5} k_{3,2} k_{4,3} + k_{1,5} k_{3,2} k_{4,5} - k_{1,2} k_{3,4} k_{4,5}}{w(\Phi_{\{2\}})} \right) k_{5,1} \\
    \psi_{2 \mid 4,1} & = 1 + \left( \frac{k_{1,5} k_{3,2} k_{4,3} - k_{1,2} k_{3,2} k_{4,5} - k_{1,2} k_{3,4} k_{4,5}}{w(\Phi_{\{2\}})} \right) k_{5,1} \\
    \psi_{2 \mid 5,1} & = 1 - \left( \frac{k_{1,2} k_{3,2} k_{4,3} + k_{1,2} k_{3,2} k_{4,5} + k_{1,2} k_{3,4} k_{4,5}}{w(\Phi_{\{2\}})} \right) k_{5,1} \,.
\end{align*}
For the remaining 12 exchange factors for $i_2 = 3, 4, 5$, we can follow the same procedure described above, to get, 
\begin{align*}
    \psi_{3 \mid 2,1} & = 1 + \frac{k_{5,1} \left( k_{1,5} k_{2,3} k_{4,3} + k_{1,5} k_{2,3} k_{4,5} \right) - k_{1,5} k_{2,1} k_{4,3} k_{5,4}}{w(\Phi_{\{3\}})} \\
    \psi_{3 \mid 3,1} & = 1 + \frac{k_{2,1} \left( k_{1,2} k_{4,3} k_{5,1} + k_{1,2} k_{4,3} k_{5,4} + k_{1,2} k_{4,5} k_{5,1} \right)}{w(\Phi_{\{3\}})} \\
    & \qquad + \frac{k_{5,1} \left( k_{1,5} k_{2,1} k_{4,3} + k_{1,5} k_{2,1} k_{4,5} + k_{1,5} k_{2,3} k_{4,3} + k_{1,5} k_{2,3} k_{4,5} \right)}{w(\Phi_{\{3\}})} \\
    \psi_{3 \mid 4,1} & = 1 + \frac{k_{2,1} \left( k_{1,2} k_{4,3} k_{5,1} + k_{1,2} k_{4,3} k_{5,4} \right)}{w(\Phi_{\{3\}})} \\
    & \qquad + \frac{k_{5,1} \left(k_{1,5} k_{2,1} k_{4,3} + k_{1,5} k_{2,3} k_{4,3} - k_{1,2} k_{2,3} k_{4,5} \right)}{w(\Phi_{\{3\}})} \\
    \psi_{3 \mid 5,1} & = 1 + \frac{k_{1,2} k_{2,1} k_{4,3} k_{5,4} - k_{5,1} \left( k_{1,2} k_{2,3} k_{4,3} + k_{1,2} k_{2,3} k_{4,5} \right)}{w(\Phi_{\{3\}})} \\
    \psi_{4 \mid 2,1} & = 1 + \frac{k_{1,5} k_{2,3} k_{3,4} k_{5,1} - k_{2,1} \left( k_{1,5} k_{3,2} k_{5,4} + k_{1,5} k_{3,4} k_{5,4} \right)}{w(\Phi_{\{4\}})} \\
    \psi_{4 \mid 3,1} & = 1 + \frac{k_{2,1} \left( k_{1,2} k_{3,4} k_{5,1} + k_{1,2} k_{3,4} k_{5,4} - k_{1,5} k_{3,2} k_{5,4} \right)}{w(\Phi_{\{4\}})} \\
    & \qquad + \frac{k_{5,1} \left( k_{1,5} k_{2,1} k_{3,4} + k_{1,5} k_{2,3} k_{3,4} \right)}{w(\Phi_{\{4\}})} \\
    \psi_{4 \mid 4,1} & = 1 + \frac{k_{2,1} \left( k_{1,2} k_{3,2} k_{5,1} + k_{1,2} k_{3,2} k_{5,4} + k_{1,2} k_{3,4} k_{5,1} + k_{1,2} k_{3,4} k_{5,4} \right)}{w(\Phi_{\{4\}})} \\
    & \qquad + \frac{k_{5,1} \left( k_{1,5} k_{2,1} k_{3,2} + k_{1,5} k_{2,1} k_{3,4} + k_{1,5} k_{2,3} k_{3,4} \right)}{w(\Phi_{\{4\}})} \\
    \psi_{4 \mid 5,1} & = 1 + \frac{k_{2,1} \left( k_{1,2} k_{3,2} k_{5,4} + k_{1,2} k_{3,4} k_{5,4} \right) - k_{1,2} k_{2,3} k_{3,4} k_{5,1}}{w(\Phi_{\{4\}})} \\
    \psi_{5 \mid 2,1} & = 1 - \frac{k_{2,1} \left( k_{1,5} k_{3,2} k_{4,3} + k_{1,5} k_{3,2} k_{4,5} + k_{1,5} k_{3,4} k_{4,5} \right)}{w(\Phi_{\{5\}})} \\
    \psi_{5 \mid 3,1} & = 1 + \frac{k_{2,1} \left( k_{1,2} k_{3,4} k_{4,5} - k_{1,5} k_{3,2} k_{4,3} - k_{1,5} k_{3,2} k_{4,5} \right)}{w(\Phi_{\{5\}})} \\
    \psi_{5 \mid 4,1} & = 1 + \frac{k_{2,1} \left( k_{1,2} k_{3,2} k_{4,5} + k_{1,2} k_{3,4} k_{4,5} - k_{1,5} k_{3,2} k_{4,3} \right)}{w(\Phi_{\{5\}})} \\
    \psi_{5 \mid 5,1} & = 1 + \frac{k_{2,1} \left( k_{1,2} k_{3,2} k_{4,3} + k_{1,2} k_{3,2} k_{4,5} + k_{1,2} k_{3,4} k_{4,5} \right)}{w(\Phi_{\{5\}})} \,.
\end{align*}
We can now determine $\tau_1^{(2)}$ by using Eqn.~\ref{eq:ex-tau2} and bringing together all the calculations above. The complete rational algebraic function is shown in Appendix \ref{app:example}.

One way to cross-check our calculations of $\tau_1^{(1)}$ and $\tau_1^{(2)}$ is to compare them against the formulas we derived in our prequel paper \cite{nam:25}. Specifically, we can apply Eqn.~\ref{eq:moment-Z} to the unravelled graph, $\Us_1[K]$, as specified in Eqn.~\ref{eq:recur-unravel}. Using this approach, $\tau_1^{(1)}$ can be written in terms of the spanning trees and forests of $\Us_1[K]$, 
\begin{equation}
\begin{aligned}
    \tau_1^{(1)} & = \mu_{1,\{1^+\}}^{(1)}(\Us_1[K]) \\
    & = \sum_{i_1=1}^5{\frac{w(\Phi_{\{1,1^+\} \to \{i_1,1^+\}}(\Us_1[K]))}{w(\Phi_{\{1^+\}}(\Us_1[K]))}} \,,
\end{aligned}
\label{eq:ex-tau1-alt}
\end{equation}
where $1^+$ is the new vertex in $\Us_1[K]$. Similarly, $\tau_1^{(2)}$ can be written as, 
\begin{equation}
\begin{aligned}
    \tau_1^{(2)} & = \mu_{1,\{1^+\}}^{(2)}(\Us_1[K]) \\
    & = 2 \sum_{i_1=1}^5{\sum_{i_2=1}^5{\frac{w(\Phi_{\{1,1^+\} \to \{i_1,1^+\}}(\Us_1[K])) \, w(\Phi_{\{i_1,1^+\} \to \{i_2,1^+\}}(\Us_1[K]))}{w(\Phi_{\{1^+\}}(\Us_1[K]))^2}}} \,.
\end{aligned}
\label{eq:ex-tau2-alt}
\end{equation}
We can undertake these calculations as we did above using the Chebotarev--Agaev recurrence in Eqn.~\ref{eq:recur} with $G = \Us_1[K]$. Identifying the new vertex, $1^+$, with $6$ in the vertex ordering, the corresponding Laplacian matrix is given by 
\[
    \Ls(\Us_1[K]) = \left[ \begin{array}{cccccc}
        -k_{1,2} - k_{1,5} & 0 & 0 & 0 & 0 & 0 \\
        k_{1,2} & -k_{2,1} - k_{2,3} & k_{3,2} & 0 & 0 & 0 \\
        0 & k_{2,3} & -k_{3,2} - k_{3,4} & k_{4,3} & 0 & 0 \\
        0 & 0 & k_{3,4} & -k_{4,3} - k_{4,5} & k_{5,4} & 0 \\
        k_{1,5} & 0 & 0 & k_{4,5} & -k_{5,1} - k_{5,4} & 0 \\
        0 & k_{2,1} & 0 & 0 & k_{5,1} & 0
    \end{array} \right]\,.
\]
It is easy to see that the required spanning forests in Eqns.~\ref{eq:ex-tau1-alt} and \ref{eq:ex-tau2-alt} are given by the following matrix entries, 
\begin{align*}
    w(\Phi_{\{1^+\}}(\Us_1[K])) & = \Qb_{6,6}^{(5)}(\Us_1[K]) \\
    w(\Phi_{\{i,1^+\} \to \{j,1^+\}}(\Us_1[K])) & = \Qb_{i,j}^{(4)}(\Us_1[K]) \,.
\end{align*}
We do not provide the resulting expressions here but have verified using a SymPy implementation of Eqn.~\ref{eq:recur} that they are indeed equal to the formulas for $\tau_1^{(1)}$ and $\tau_1^{(2)}$ shown in Appendix \ref{app:example}.

\section{Discussion}
\label{sec:discussion}

In the prequel to the present paper \cite{nam:25}, we showed how transient properties of a Markov process, such as splitting probabilities and the moments of conditional and unconditional FPTs, could be expressed as manifestly integrally positive rational algebraic functions of the transition rates, using the All-Minors Matrix-Tree theorem and spanning forests of the underlying linear framework graph. In the present paper, we have sought to integrate this work with two other approaches to determining transient properties, due to Terrell Hill \cite{hill:88} and to Mark Kac \cite{kac:47}. Hill sketched a modification to a Markov process through which the splitting probabilities and the mean FPT could be calculated in terms of s.s.~probabilities in the modified process. Similarly, Kac showed how the mean recurrence times of a Markov process could be calculated in terms of s.s.~probabilities. Hill's argument amounted to the calculation of an example, and both Hill and Kac considered only the first moment. 

We have introduced three concepts to integrate their work with ours. First, we have formalised Hill's method in terms of a \textit{Hill operator}, $\Hs_u[G]$ on a graph, $G$, where $u$ is the source vertex for the FPTs (Definition~\ref{def:hill}). Second, we have defined the \textit{exchange factors}, $\psi_{k \mid v, u}$ (Definition~\ref{lem:exchange-def}), which allow us to calculate spanning tree weights in one Hill construction, $\Hs_v[G]$, in terms of spanning tree weights in another Hill construction, $\Hs_u[G]$, using the \textit{exchange formula} (Lemma~\ref{lem:exchange}). Third, to generalise Kac's result, we have introduced the \textit{unravelling operator}, $\Us_v[G]$, on a graph $G$, where $v$ is the vertex whose recurrence time is to be calculated (Definition~\ref{def:unravel}). These three concepts have allowed us to give a rigorous proof of Hill's claim about splitting probabilities, $\pi_{u,z}(G)$ (Theorem~\ref{thm:hill-split}); to prove Hill-like results for the $r$-th moment of the unconditional FPT to any target state, $\mu^{(r)}_{u,Z}(G)$ (Theorem~\ref{thm:hill-moment-Z}), and the $r$-th moment of the conditional FPT to a given target state, $\mu^{(r)}_{c,u,z}(G)$ (Theorem~\ref{thm:hill-moment}); and, finally, to prove Kac-like results for the $r$-th moment of the recurrence time, $\tau^{(r)}_u(G)$ (Theorem~\ref{thm:kac-moment}). 

Theorems~\ref{thm:hill-split}, \ref{thm:hill-moment-Z} and \ref{thm:hill-moment} are all stated in terms of $\Hs_u[G]$, and appropriate exchange factors, so that the Hill operator is central to generalising Hill's idea to a broader range of transient quantities and, especially, to higher moments of FPTs. Theorem~\ref{thm:kac-moment} is stated solely in terms of $G$ but the proof relies on exploiting Theorem~\ref{thm:hill-moment-Z} and using the remarkable algebraic relationship between our two operators (Lemma~\ref{t-hug}), which we may write informally as,
\begin{equation} 
    \Hs_u[\,\Us_u] = \Ib \,,
\label{e-hsuu}
\end{equation}
to ``disappear'' both operators from the statement of the theorem.

Our results show that, as soon as one ventures beyond the first moment, it is no longer possible to rely solely on s.s.~probabilities, and therefore solely on spanning trees, of the linear framework graph. In particular, given a source vertex $u$, our formulas for the higher moments of FPTs from $u$ show that, in addition to the s.s.~probabilities of $\Hs_u[G]$, which require the spanning trees of $\Hs_u[G]$, we also need the exchange factors, $\psi_{k \mid v,u}$, which require both spanning trees and spanning forests of $\Hs_u[G]$. Similarly, in our formulas for the higher moments of the recurrence time, it is the exchange factors that require the spanning forests, as well as spanning trees, of $G$, while the s.s.~probabilities of $G$ require only the spanning trees. The exchange factors thereby appear to compartmentalise the requirement for spanning forests in these calculations. 

The separation between forests and trees comes with a difference, which is that the exchange formula (Lemma~\ref{lem:exchange}) for the exchange factors is no longer manifestly integrally positive. Definition~\ref{lem:exchange-def} shows that the exchange factors do have a representation as manifestly integrally positive rational algebraic functions, which means that the exchange formula must conceal cancellations. We have shown how these come about for some simple corner cases but the general problem of uncovering the cancellations does not seem straightforward and we have had to leave it open for future work. 

The results of the present paper, when compared against those of our prequel paper, give two rather different formulas for splitting probabilities (Theorem~\ref{thm:hill-split}) and moments of FPTs of Markov processes (Theorems~\ref{thm:hill-moment-Z} and \ref{thm:hill-moment}), together with new formulas for recurrence times (Theorem~\ref{thm:kac-moment}). We have just noted one distinction between the old and the new: the new formulas no longer show manifest integral positivity. The old formulas also reveal more clearly the rational algebraic structure. We can see, for instance, directly from Eqn.~\ref{eq:moment-Z} that the denominator polynomial for the $r$-th moment of the unconditional FPT, $\mu^{(r)}_{u,Z}$, must be $w(\Phi_{Z}(G))^r$, while this is harder to discern from Theorem~\ref{thm:hill-moment-Z}. However, Theorems~\ref{thm:hill-split}, \ref{thm:hill-moment-Z} and \ref{thm:hill-moment} are formulated, through the Hill operator, in terms of the graph $\Hs_u[G]$, which is always smaller than the graph $G$ itself. If the latter has $N$ vertices and the set of target states, $Z$, has size $\# Z = T$, then $\Hs_u[G]$ has $N-T$ vertices. Calculations are usually easier in a smaller graph and the altered structure of $\Hs_u[G]$ in comparison to $G$ may also be informative. As for the new Theorem~\ref{thm:kac-moment}, we showed in Eqn.~\ref{e-trug} how details of the rational algebraic structure of $\tau^{(r)}_u(G)$ can be readily derived.

Another feature of the new formulas is the appearance of s.s.~probabilities. One biophysical problem that has been of considerable interest in recent years is the existence of \textit{Hopfield barriers}, which are the fundamental limits to information processing for systems at thermodynamic equilibrium \cite{estrada:16}. In previous work, we have determined the Hopfield barrier for sharpness of input-output functions \cite{martinez-corral:24}. An important but challenging open question is whether such Hopfield barriers also exist for information processing in the transient regime, when systems are relaxing to a s.s.~of thermodynamic equilibrium. Steady-state probabilities simplify dramatically at thermodynamic equilibrium, as specified by equilibrium statistical mechanics \cite{nam:22}, but, up to now, it has been difficult to see how the eventual relaxation to thermodynamic equilibrium can influence transient properties. All the new formulas presented here include s.s.~probabilities. This is particularly interesting for the recurrence times in Theorem~\ref{thm:kac-moment} because they are formulated for strongly connected graphs, in which the conditions for thermodynamic equilibrium may be specified. Of course, the impact of thermodynamic equilibrium on the exchange factors still has to be addressed, but there is now at least an opening through which the question of transient Hopfield barriers may potentially be tackled.

A serious problem with undertaking spanning forest calculations is that forest enumeration rapidly becomes intractable and is already tedious even for quite small graphs, such as the five-vertex cycle graph, $K$, in Fig.~\ref{fig:example}A. The calculation of the second moment of the recurrence time, $\tau^{(2)}_1(K)$, in \S\ref{sec:example} works through the recursive procedure introduced in \cite{chebotarev:02}, which addresses some of these calculational challenges. The resulting formula in Appendix~\ref{app:example} is impressively complicated for just the second moment, especially in comparison to the first moment. The treatment of this example should be of broader interest in showing how spanning forest calculations can be undertaken with the best available current methods.

Perhaps the most interesting mathematical feature of the present paper is the emergence of operators on linear framework graphs. The graph-theoretic approach of the linear framework offers an attractive setting in which to formally define such operators on the corresponding Markov processes, as we did in providing a rigorous basis for Hill's idea. We introduced $\Hs_u$ and $\Us_v$ quite independently of each other, the former to formalise Hill's approach and the latter to provide a setting for Kac's approach. These two historical approaches are widely separated both in time and in intellectual context, so it came as a considerable surprise to discover the inverse relationship between our two operators (Lemma~\ref{t-hug}), as expressed in Eqn.~\ref{e-hsuu}. As noted above, this inverse property is crucial for showing how the Kac-like formulas for recurrence times in Theorem~\ref{thm:kac-moment} can be deduced from the Hill-like formulas for FPTs in Theorem~\ref{thm:hill-moment-Z}. The unexpected algebraic relationship between the two operators does raise the question as to whether there is an \textit{operator algebra} of some nature lurking somewhere in the background. We are not aware of such a development, either within graph theory or the theory of Markov processes. Of course, the two operators introduced here have different domains of definition, so any such algebra would have to be carefully specified. Nevertheless, the centrality of the operators to the results of the present paper suggest that exploring other kinds of operators on linear framework graphs may be a productive direction for future work. 

\clearpage
\appendix

\section{Moments of recurrence times for the example}
\label{app:example}

In \S\ref{sec:example} we showed how to use the results of the present paper to calculate the first moment, $\tau^{(1)}_1$, and the second moment, $\tau^{(2)}_1$ of the recurrence time to vertex $1$ for the example graph $K$ in Fig.~\ref{fig:example}A. For the sake of completeness, we provide here the complete expressions for these quantities as manifestly integrally positive rational algebraic functions of the edge labels.

The first moment is relatively straightforward. Let us write $\tau^{(1)}_1 = R/S$. Putting together Eqns.~\ref{e-t1K}, \ref{eq:ex-w1} and \ref{eq:ex-trees}, we see that, 
\begin{align*}
    R & = k_{2,1} k_{3,2} k_{4,3} k_{5,4} + k_{2,1} k_{3,2} k_{4,3} k_{5,1} + k_{2,1} k_{3,2} k_{4,5} k_{5,1} + k_{2,1} k_{3,4} k_{4,5} k_{5,1} \\
    & \quad + k_{2,3} k_{3,4} k_{4,5} k_{5,1} + k_{1,5} k_{3,2} k_{4,3} k_{5,4} + k_{1,2} k_{3,2} k_{4,3} k_{5,4} + k_{1,2} k_{3,2} k_{4,3} k_{5,1} \\
    & \quad + k_{1,2} k_{3,2} k_{4,5} k_{5,1} + k_{1,2} k_{3,4} k_{4,5} k_{5,1} + k_{1,5} k_{2,1} k_{4,3} k_{5,4} + k_{1,5} k_{2,3} k_{4,3} k_{5,4} \\
    & \quad + k_{1,2} k_{2,3} k_{4,3} k_{5,4} + k_{1,2} k_{2,3} k_{4,3} k_{5,1} + k_{1,2} k_{2,3} k_{4,5} k_{5,1} + k_{1,5} k_{2,1} k_{3,2} k_{5,4} \\
    & \quad + k_{1,5} k_{2,1} k_{3,4} k_{5,4} + k_{1,5} k_{2,3} k_{3,4} k_{5,4} + k_{1,2} k_{2,3} k_{3,4} k_{5,4} + k_{1,2} k_{2,3} k_{3,4} k_{5,1} \\
    & \quad + k_{1,5} k_{2,1} k_{3,2} k_{4,3} + k_{1,5} k_{2,1} k_{3,2} k_{4,5} + k_{1,5} k_{2,1} k_{3,4} k_{4,5} + k_{1,5} k_{2,3} k_{3,4} k_{4,5} \\
    & \quad + k_{1,2} k_{2,3} k_{3,4} k_{4,5}
\end{align*}
and 
\begin{align*}
    S & = \left( k_{1,2} + k_{1,5} \right) \left( k_{2,1} k_{3,2} k_{4,3} k_{5,4} + k_{2,1} k_{3,2} k_{4,3} k_{5,1} + k_{2,1} k_{3,2} k_{4,5} k_{5,1} \right. \\
    & \qquad \left. + k_{2,1} k_{3,4} k_{4,5} k_{5,1} + k_{2,3} k_{3,4} k_{4,5} k_{5,1} \right).
\end{align*}

The second moment is substantially more complicated. Let us write $\tau^{(2)}_1 = 2P/Q$. We can combine Eqn~\ref{eq:ex-tau2} with the spanning tree weights in Eqns.~\ref{eq:ex-w1} and \ref{eq:ex-trees} and the exchange factors, $\psi_{i_2 \mid i_1, u}$, calculated at the end of \S\ref{sec:example} to see that, 
\begin{align*}
    P & =
    k_{1,2}^{2} k_{2,1} k_{2,3} k_{3,2} k_{3,4} k_{4,3} k_{4,5}
    + k_{1,2}^{2} k_{2,1} k_{2,3} k_{3,2} k_{3,4} k_{4,5}^{2}
    + k_{1,2}^{2} k_{2,1} k_{2,3} k_{3,2} k_{3,4} k_{4,5} k_{5,1} \\
    & \hspace{-0.5em} + 2 k_{1,2}^{2} k_{2,1} k_{2,3} k_{3,2} k_{3,4} k_{4,5} k_{5,4}
    + k_{1,2}^{2} k_{2,1} k_{2,3} k_{3,2} k_{3,4} k_{5,1}^{2}
    + 2 k_{1,2}^{2} k_{2,1} k_{2,3} k_{3,2} k_{3,4} k_{5,1} k_{5,4} \\
    & \hspace{-0.5em} + k_{1,2}^{2} k_{2,1} k_{2,3} k_{3,2} k_{3,4} k_{5,4}^{2}
    + k_{1,2}^{2} k_{2,1} k_{2,3} k_{3,4}^{2} k_{4,5}^{2}
    + k_{1,2}^{2} k_{2,1} k_{2,3} k_{3,4}^{2} k_{4,5} k_{5,1} \\
    & \hspace{-0.5em} + 2 k_{1,2}^{2} k_{2,1} k_{2,3} k_{3,4}^{2} k_{4,5} k_{5,4}
    + k_{1,2}^{2} k_{2,1} k_{2,3} k_{3,4}^{2} k_{5,1}^{2}
    + 2 k_{1,2}^{2} k_{2,1} k_{2,3} k_{3,4}^{2} k_{5,1} k_{5,4} \\
    & \hspace{-0.5em} + k_{1,2}^{2} k_{2,1} k_{2,3} k_{3,4}^{2} k_{5,4}^{2}
    + k_{1,2}^{2} k_{2,1} k_{2,3} k_{3,4} k_{4,3} k_{4,5} k_{5,1}
    + 2 k_{1,2}^{2} k_{2,1} k_{2,3} k_{3,4} k_{4,3} k_{4,5} k_{5,4} \\
    & \hspace{-0.5em} + 2 k_{1,2}^{2} k_{2,1} k_{2,3} k_{3,4} k_{4,3} k_{5,1}^{2}
    + 4 k_{1,2}^{2} k_{2,1} k_{2,3} k_{3,4} k_{4,3} k_{5,1} k_{5,4}
    + 2 k_{1,2}^{2} k_{2,1} k_{2,3} k_{3,4} k_{4,3} k_{5,4}^{2} \\
    & \hspace{-0.5em} + k_{1,2}^{2} k_{2,1} k_{2,3} k_{3,4} k_{4,5}^{2} k_{5,1}
    + k_{1,2}^{2} k_{2,1} k_{2,3} k_{3,4} k_{4,5} k_{5,1}^{2}
    + k_{1,2}^{2} k_{2,1} k_{2,3} k_{3,4} k_{4,5} k_{5,1} k_{5,4} \\
    & \hspace{-0.5em} + k_{1,2}^{2} k_{2,1} k_{2,3} k_{4,3}^{2} k_{5,1}^{2}
    + 2 k_{1,2}^{2} k_{2,1} k_{2,3} k_{4,3}^{2} k_{5,1} k_{5,4}
    + k_{1,2}^{2} k_{2,1} k_{2,3} k_{4,3}^{2} k_{5,4}^{2} \\
    & \hspace{-0.5em} + 2 k_{1,2}^{2} k_{2,1} k_{2,3} k_{4,3} k_{4,5} k_{5,1}^{2}
    + 2 k_{1,2}^{2} k_{2,1} k_{2,3} k_{4,3} k_{4,5} k_{5,1} k_{5,4}
    + k_{1,2}^{2} k_{2,1} k_{2,3} k_{4,5}^{2} k_{5,1}^{2} \\
    & \hspace{-0.5em} + k_{1,2}^{2} k_{2,3}^{2} k_{3,4}^{2} k_{4,5}^{2}
    + k_{1,2}^{2} k_{2,3}^{2} k_{3,4}^{2} k_{4,5} k_{5,1}
    + 2 k_{1,2}^{2} k_{2,3}^{2} k_{3,4}^{2} k_{4,5} k_{5,4} \\
    & \hspace{-0.5em} + k_{1,2}^{2} k_{2,3}^{2} k_{3,4}^{2} k_{5,1}^{2}
    + 2 k_{1,2}^{2} k_{2,3}^{2} k_{3,4}^{2} k_{5,1} k_{5,4}
    + k_{1,2}^{2} k_{2,3}^{2} k_{3,4}^{2} k_{5,4}^{2} \\
    & \hspace{-0.5em} + k_{1,2}^{2} k_{2,3}^{2} k_{3,4} k_{4,3} k_{4,5} k_{5,1}
    + 2 k_{1,2}^{2} k_{2,3}^{2} k_{3,4} k_{4,3} k_{4,5} k_{5,4}
    + 2 k_{1,2}^{2} k_{2,3}^{2} k_{3,4} k_{4,3} k_{5,1}^{2} \\
    & \hspace{-0.5em} + 4 k_{1,2}^{2} k_{2,3}^{2} k_{3,4} k_{4,3} k_{5,1} k_{5,4}
    + 2 k_{1,2}^{2} k_{2,3}^{2} k_{3,4} k_{4,3} k_{5,4}^{2}
    + k_{1,2}^{2} k_{2,3}^{2} k_{3,4} k_{4,5}^{2} k_{5,1} \\
    & \hspace{-0.5em} + k_{1,2}^{2} k_{2,3}^{2} k_{3,4} k_{4,5} k_{5,1}^{2}
    + k_{1,2}^{2} k_{2,3}^{2} k_{3,4} k_{4,5} k_{5,1} k_{5,4}
    + k_{1,2}^{2} k_{2,3}^{2} k_{4,3}^{2} k_{5,1}^{2} \\
    & \hspace{-0.5em} + 2 k_{1,2}^{2} k_{2,3}^{2} k_{4,3}^{2} k_{5,1} k_{5,4}
    + k_{1,2}^{2} k_{2,3}^{2} k_{4,3}^{2} k_{5,4}^{2}
    + 2 k_{1,2}^{2} k_{2,3}^{2} k_{4,3} k_{4,5} k_{5,1}^{2} \\
    & \hspace{-0.5em} + 2 k_{1,2}^{2} k_{2,3}^{2} k_{4,3} k_{4,5} k_{5,1} k_{5,4}
    + k_{1,2}^{2} k_{2,3}^{2} k_{4,5}^{2} k_{5,1}^{2}
    + k_{1,2}^{2} k_{2,3} k_{3,2} k_{3,4} k_{4,3} k_{4,5} k_{5,1} \\
    & \hspace{-0.5em} + 2 k_{1,2}^{2} k_{2,3} k_{3,2} k_{3,4} k_{4,3} k_{4,5} k_{5,4}
    + 2 k_{1,2}^{2} k_{2,3} k_{3,2} k_{3,4} k_{4,3} k_{5,1}^{2} \\
    & \hspace{-0.5em} + 4 k_{1,2}^{2} k_{2,3} k_{3,2} k_{3,4} k_{4,3} k_{5,1} k_{5,4}
    + 2 k_{1,2}^{2} k_{2,3} k_{3,2} k_{3,4} k_{4,3} k_{5,4}^{2}
    + k_{1,2}^{2} k_{2,3} k_{3,2} k_{3,4} k_{4,5}^{2} k_{5,1} \\
    & \hspace{-0.5em} + k_{1,2}^{2} k_{2,3} k_{3,2} k_{3,4} k_{4,5} k_{5,1}^{2}
    + k_{1,2}^{2} k_{2,3} k_{3,2} k_{3,4} k_{4,5} k_{5,1} k_{5,4}
    + 2 k_{1,2}^{2} k_{2,3} k_{3,2} k_{4,3}^{2} k_{5,1}^{2} \\
    & \hspace{-0.5em} + 4 k_{1,2}^{2} k_{2,3} k_{3,2} k_{4,3}^{2} k_{5,1} k_{5,4}
    + 2 k_{1,2}^{2} k_{2,3} k_{3,2} k_{4,3}^{2} k_{5,4}^{2}
    + 4 k_{1,2}^{2} k_{2,3} k_{3,2} k_{4,3} k_{4,5} k_{5,1}^{2} \\
    & \hspace{-0.5em} + 4 k_{1,2}^{2} k_{2,3} k_{3,2} k_{4,3} k_{4,5} k_{5,1} k_{5,4}
    + 2 k_{1,2}^{2} k_{2,3} k_{3,2} k_{4,5}^{2} k_{5,1}^{2}
    + k_{1,2}^{2} k_{2,3} k_{3,4}^{2} k_{4,5}^{2} k_{5,1} \\
    & \hspace{-0.5em} + k_{1,2}^{2} k_{2,3} k_{3,4}^{2} k_{4,5} k_{5,1}^{2}
    + k_{1,2}^{2} k_{2,3} k_{3,4}^{2} k_{4,5} k_{5,1} k_{5,4}
    + k_{1,2}^{2} k_{2,3} k_{3,4} k_{4,3} k_{4,5} k_{5,1}^{2} \\
    & \hspace{-0.5em} + k_{1,2}^{2} k_{2,3} k_{3,4} k_{4,3} k_{4,5} k_{5,1} k_{5,4}
    + k_{1,2}^{2} k_{2,3} k_{3,4} k_{4,5}^{2} k_{5,1}^{2}
    + k_{1,2}^{2} k_{3,2}^{2} k_{4,3}^{2} k_{5,1}^{2} \\
    & \hspace{-0.5em} + 2 k_{1,2}^{2} k_{3,2}^{2} k_{4,3}^{2} k_{5,1} k_{5,4}
    + k_{1,2}^{2} k_{3,2}^{2} k_{4,3}^{2} k_{5,4}^{2}
    + 2 k_{1,2}^{2} k_{3,2}^{2} k_{4,3} k_{4,5} k_{5,1}^{2} \\
    & \hspace{-0.5em} + 2 k_{1,2}^{2} k_{3,2}^{2} k_{4,3} k_{4,5} k_{5,1} k_{5,4}
    + k_{1,2}^{2} k_{3,2}^{2} k_{4,5}^{2} k_{5,1}^{2}
    + 2 k_{1,2}^{2} k_{3,2} k_{3,4} k_{4,3} k_{4,5} k_{5,1}^{2} \\
    & \hspace{-0.5em} + 2 k_{1,2}^{2} k_{3,2} k_{3,4} k_{4,3} k_{4,5} k_{5,1} k_{5,4}
    + 2 k_{1,2}^{2} k_{3,2} k_{3,4} k_{4,5}^{2} k_{5,1}^{2}
    + k_{1,2}^{2} k_{3,4}^{2} k_{4,5}^{2} k_{5,1}^{2} \\
    & \hspace{-0.5em} + k_{1,2} k_{1,5} k_{2,1}^{2} k_{3,2}^{2} k_{4,3}^{2}
    + 2 k_{1,2} k_{1,5} k_{2,1}^{2} k_{3,2}^{2} k_{4,3} k_{4,5}
    + k_{1,2} k_{1,5} k_{2,1}^{2} k_{3,2}^{2} k_{4,3} k_{5,4} \\
    & \hspace{-0.5em} + k_{1,2} k_{1,5} k_{2,1}^{2} k_{3,2}^{2} k_{4,5}^{2}
    + 2 k_{1,2} k_{1,5} k_{2,1}^{2} k_{3,2}^{2} k_{4,5} k_{5,4}
    + k_{1,2} k_{1,5} k_{2,1}^{2} k_{3,2}^{2} k_{5,1} k_{5,4} \\
    & \hspace{-0.5em} + k_{1,2} k_{1,5} k_{2,1}^{2} k_{3,2}^{2} k_{5,4}^{2}
    + 2 k_{1,2} k_{1,5} k_{2,1}^{2} k_{3,2} k_{3,4} k_{4,3} k_{4,5}
    + k_{1,2} k_{1,5} k_{2,1}^{2} k_{3,2} k_{3,4} k_{4,3} k_{5,4} \\
    & \hspace{-0.5em} + 2 k_{1,2} k_{1,5} k_{2,1}^{2} k_{3,2} k_{3,4} k_{4,5}^{2}
    + 4 k_{1,2} k_{1,5} k_{2,1}^{2} k_{3,2} k_{3,4} k_{4,5} k_{5,4} \\
    & \hspace{-0.5em} + 2 k_{1,2} k_{1,5} k_{2,1}^{2} k_{3,2} k_{3,4} k_{5,1} k_{5,4}
    + 2 k_{1,2} k_{1,5} k_{2,1}^{2} k_{3,2} k_{3,4} k_{5,4}^{2}
    + k_{1,2} k_{1,5} k_{2,1}^{2} k_{3,2} k_{4,3}^{2} k_{5,4} \\
    & \hspace{-0.5em} + k_{1,2} k_{1,5} k_{2,1}^{2} k_{3,2} k_{4,3} k_{4,5} k_{5,4}
    + k_{1,2} k_{1,5} k_{2,1}^{2} k_{3,2} k_{4,3} k_{5,1} k_{5,4}
    + k_{1,2} k_{1,5} k_{2,1}^{2} k_{3,2} k_{4,3} k_{5,4}^{2} \\
    & \hspace{-0.5em} + k_{1,2} k_{1,5} k_{2,1}^{2} k_{3,4}^{2} k_{4,5}^{2}
    + 2 k_{1,2} k_{1,5} k_{2,1}^{2} k_{3,4}^{2} k_{4,5} k_{5,4}
    + k_{1,2} k_{1,5} k_{2,1}^{2} k_{3,4}^{2} k_{5,1} k_{5,4} \\
    & \hspace{-0.5em} + k_{1,2} k_{1,5} k_{2,1}^{2} k_{3,4}^{2} k_{5,4}^{2}
    + 2 k_{1,2} k_{1,5} k_{2,1}^{2} k_{3,4} k_{4,3} k_{4,5} k_{5,4}
    + 2 k_{1,2} k_{1,5} k_{2,1}^{2} k_{3,4} k_{4,3} k_{5,1} k_{5,4} \\
    & \hspace{-0.5em} + 2 k_{1,2} k_{1,5} k_{2,1}^{2} k_{3,4} k_{4,3} k_{5,4}^{2}
    + k_{1,2} k_{1,5} k_{2,1}^{2} k_{4,3}^{2} k_{5,1} k_{5,4}
    + k_{1,2} k_{1,5} k_{2,1}^{2} k_{4,3}^{2} k_{5,4}^{2} \\
    & \hspace{-0.5em} + k_{1,2} k_{1,5} k_{2,1}^{2} k_{4,3} k_{4,5} k_{5,1} k_{5,4}
    + 3 k_{1,2} k_{1,5} k_{2,1} k_{2,3} k_{3,2} k_{3,4} k_{4,3} k_{4,5} \\
    & \hspace{-0.5em} + k_{1,2} k_{1,5} k_{2,1} k_{2,3} k_{3,2} k_{3,4} k_{4,3} k_{5,4}
    + 3 k_{1,2} k_{1,5} k_{2,1} k_{2,3} k_{3,2} k_{3,4} k_{4,5}^{2} \\
    & \hspace{-0.5em} + k_{1,2} k_{1,5} k_{2,1} k_{2,3} k_{3,2} k_{3,4} k_{4,5} k_{5,1}
    + 6 k_{1,2} k_{1,5} k_{2,1} k_{2,3} k_{3,2} k_{3,4} k_{4,5} k_{5,4} \\
    & \hspace{-0.5em} + k_{1,2} k_{1,5} k_{2,1} k_{2,3} k_{3,2} k_{3,4} k_{5,1}^{2}
    + 4 k_{1,2} k_{1,5} k_{2,1} k_{2,3} k_{3,2} k_{3,4} k_{5,1} k_{5,4} \\
    & \hspace{-0.5em} + 3 k_{1,2} k_{1,5} k_{2,1} k_{2,3} k_{3,2} k_{3,4} k_{5,4}^{2}
   + k_{1,2} k_{1,5} k_{2,1} k_{2,3} k_{3,2} k_{4,3}^{2} k_{5,4} \\
    & \hspace{-0.5em} + k_{1,2} k_{1,5} k_{2,1} k_{2,3} k_{3,2} k_{4,3} k_{4,5} k_{5,4}
    + k_{1,2} k_{1,5} k_{2,1} k_{2,3} k_{3,2} k_{4,3} k_{5,1} k_{5,4} \\
    & \hspace{-0.5em} + k_{1,2} k_{1,5} k_{2,1} k_{2,3} k_{3,2} k_{4,3} k_{5,4}^{2}
    + 3 k_{1,2} k_{1,5} k_{2,1} k_{2,3} k_{3,4}^{2} k_{4,5}^{2}
    + k_{1,2} k_{1,5} k_{2,1} k_{2,3} k_{3,4}^{2} k_{4,5} k_{5,1} \\
    & \hspace{-0.5em} + 6 k_{1,2} k_{1,5} k_{2,1} k_{2,3} k_{3,4}^{2} k_{4,5} k_{5,4}
    + k_{1,2} k_{1,5} k_{2,1} k_{2,3} k_{3,4}^{2} k_{5,1}^{2}
    + 4 k_{1,2} k_{1,5} k_{2,1} k_{2,3} k_{3,4}^{2} k_{5,1} k_{5,4} \\
    & \hspace{-0.5em} + 3 k_{1,2} k_{1,5} k_{2,1} k_{2,3} k_{3,4}^{2} k_{5,4}^{2}
    + k_{1,2} k_{1,5} k_{2,1} k_{2,3} k_{3,4} k_{4,3} k_{4,5} k_{5,1} \\
    & \hspace{-0.5em} + 6 k_{1,2} k_{1,5} k_{2,1} k_{2,3} k_{3,4} k_{4,3} k_{4,5} k_{5,4}
    + 2 k_{1,2} k_{1,5} k_{2,1} k_{2,3} k_{3,4} k_{4,3} k_{5,1}^{2} \\
    & \hspace{-0.5em} + 8 k_{1,2} k_{1,5} k_{2,1} k_{2,3} k_{3,4} k_{4,3} k_{5,1} k_{5,4}
    + 6 k_{1,2} k_{1,5} k_{2,1} k_{2,3} k_{3,4} k_{4,3} k_{5,4}^{2} \\
    & \hspace{-0.5em} + k_{1,2} k_{1,5} k_{2,1} k_{2,3} k_{3,4} k_{4,5}^{2} k_{5,1} 
    + k_{1,2} k_{1,5} k_{2,1} k_{2,3} k_{3,4} k_{4,5} k_{5,1}^{2} \\
    & \hspace{-0.5em} + k_{1,2} k_{1,5} k_{2,1} k_{2,3} k_{3,4} k_{4,5} k_{5,1} k_{5,4}
    + k_{1,2} k_{1,5} k_{2,1} k_{2,3} k_{4,3}^{2} k_{5,1}^{2} \\
    & \hspace{-0.5em} + 4 k_{1,2} k_{1,5} k_{2,1} k_{2,3} k_{4,3}^{2} k_{5,1} k_{5,4}
    + 3 k_{1,2} k_{1,5} k_{2,1} k_{2,3} k_{4,3}^{2} k_{5,4}^{2}
    + 2 k_{1,2} k_{1,5} k_{2,1} k_{2,3} k_{4,3} k_{4,5} k_{5,1}^{2} \\
    & \hspace{-0.5em} + 4 k_{1,2} k_{1,5} k_{2,1} k_{2,3} k_{4,3} k_{4,5} k_{5,1} k_{5,4}
    + k_{1,2} k_{1,5} k_{2,1} k_{2,3} k_{4,5}^{2} k_{5,1}^{2}
    + k_{1,2} k_{1,5} k_{2,1} k_{3,2}^{2} k_{4,3}^{2} k_{5,4} \\
    & \hspace{-0.5em} + k_{1,2} k_{1,5} k_{2,1} k_{3,2}^{2} k_{4,3} k_{4,5} k_{5,4}
    + k_{1,2} k_{1,5} k_{2,1} k_{3,2}^{2} k_{4,3} k_{5,1} k_{5,4}
    + k_{1,2} k_{1,5} k_{2,1} k_{3,2}^{2} k_{4,3} k_{5,4}^{2} \\
    & \hspace{-0.5em} + k_{1,2} k_{1,5} k_{2,1} k_{3,2} k_{3,4} k_{4,3} k_{4,5} k_{5,4}
    + k_{1,2} k_{1,5} k_{2,1} k_{3,2} k_{3,4} k_{4,3} k_{5,1} k_{5,4} \\
    & \hspace{-0.5em} + k_{1,2} k_{1,5} k_{2,1} k_{3,2} k_{3,4} k_{4,3} k_{5,4}^{2}
    + k_{1,2} k_{1,5} k_{2,1} k_{3,2} k_{4,3}^{2} k_{5,1} k_{5,4}
    + k_{1,2} k_{1,5} k_{2,1} k_{3,2} k_{4,3}^{2} k_{5,4}^{2} \\
    & \hspace{-0.5em} + k_{1,2} k_{1,5} k_{2,1} k_{3,2} k_{4,3} k_{4,5} k_{5,1} k_{5,4}
    + 2 k_{1,2} k_{1,5} k_{2,3}^{2} k_{3,4}^{2} k_{4,5}^{2}
    + k_{1,2} k_{1,5} k_{2,3}^{2} k_{3,4}^{2} k_{4,5} k_{5,1} \\
    & \hspace{-0.5em} + 4 k_{1,2} k_{1,5} k_{2,3}^{2} k_{3,4}^{2} k_{4,5} k_{5,4}
    + k_{1,2} k_{1,5} k_{2,3}^{2} k_{3,4}^{2} k_{5,1}^{2}
    + 3 k_{1,2} k_{1,5} k_{2,3}^{2} k_{3,4}^{2} k_{5,1} k_{5,4} \\
    & \hspace{-0.5em} + 2 k_{1,2} k_{1,5} k_{2,3}^{2} k_{3,4}^{2} k_{5,4}^{2}
    + k_{1,2} k_{1,5} k_{2,3}^{2} k_{3,4} k_{4,3} k_{4,5} k_{5,1}
    + 4 k_{1,2} k_{1,5} k_{2,3}^{2} k_{3,4} k_{4,3} k_{4,5} k_{5,4} \\
    & \hspace{-0.5em} + 2 k_{1,2} k_{1,5} k_{2,3}^{2} k_{3,4} k_{4,3} k_{5,1}^{2}
    + 6 k_{1,2} k_{1,5} k_{2,3}^{2} k_{3,4} k_{4,3} k_{5,1} k_{5,4}
    + 4 k_{1,2} k_{1,5} k_{2,3}^{2} k_{3,4} k_{4,3} k_{5,4}^{2} \\
    & \hspace{-0.5em} + k_{1,2} k_{1,5} k_{2,3}^{2} k_{3,4} k_{4,5}^{2} k_{5,1}
    + k_{1,2} k_{1,5} k_{2,3}^{2} k_{3,4} k_{4,5} k_{5,1}^{2}
    + k_{1,2} k_{1,5} k_{2,3}^{2} k_{3,4} k_{4,5} k_{5,1} k_{5,4} \\
    & \hspace{-0.5em} + k_{1,2} k_{1,5} k_{2,3}^{2} k_{4,3}^{2} k_{5,1}^{2}
    + 3 k_{1,2} k_{1,5} k_{2,3}^{2} k_{4,3}^{2} k_{5,1} k_{5,4}
    + 2 k_{1,2} k_{1,5} k_{2,3}^{2} k_{4,3}^{2} k_{5,4}^{2} \\
    & \hspace{-0.5em} + 2 k_{1,2} k_{1,5} k_{2,3}^{2} k_{4,3} k_{4,5} k_{5,1}^{2}
    + 3 k_{1,2} k_{1,5} k_{2,3}^{2} k_{4,3} k_{4,5} k_{5,1} k_{5,4}
    + k_{1,2} k_{1,5} k_{2,3}^{2} k_{4,5}^{2} k_{5,1}^{2} \\
    & \hspace{-0.5em} + k_{1,2} k_{1,5} k_{2,3} k_{3,2} k_{3,4} k_{4,3} k_{4,5} k_{5,1}
    + 4 k_{1,2} k_{1,5} k_{2,3} k_{3,2} k_{3,4} k_{4,3} k_{4,5} k_{5,4} \\
    & \hspace{-0.5em} + 2 k_{1,2} k_{1,5} k_{2,3} k_{3,2} k_{3,4} k_{4,3} k_{5,1}^{2}
    + 6 k_{1,2} k_{1,5} k_{2,3} k_{3,2} k_{3,4} k_{4,3} k_{5,1} k_{5,4} \\
    & \hspace{-0.5em} + 4 k_{1,2} k_{1,5} k_{2,3} k_{3,2} k_{3,4} k_{4,3} k_{5,4}^{2}
    + k_{1,2} k_{1,5} k_{2,3} k_{3,2} k_{3,4} k_{4,5}^{2} k_{5,1} \\
    & \hspace{-0.5em} + k_{1,2} k_{1,5} k_{2,3} k_{3,2} k_{3,4} k_{4,5} k_{5,1}^{2}
    + k_{1,2} k_{1,5} k_{2,3} k_{3,2} k_{3,4} k_{4,5} k_{5,1} k_{5,4} \\
    & \hspace{-0.5em} + 2 k_{1,2} k_{1,5} k_{2,3} k_{3,2} k_{4,3}^{2} k_{5,1}^{2} 
    + 6 k_{1,2} k_{1,5} k_{2,3} k_{3,2} k_{4,3}^{2} k_{5,1} k_{5,4}
    + 4 k_{1,2} k_{1,5} k_{2,3} k_{3,2} k_{4,3}^{2} k_{5,4}^{2} \\
    & \hspace{-0.5em} + 4 k_{1,2} k_{1,5} k_{2,3} k_{3,2} k_{4,3} k_{4,5} k_{5,1}^{2}
    + 6 k_{1,2} k_{1,5} k_{2,3} k_{3,2} k_{4,3} k_{4,5} k_{5,1} k_{5,4} \\
    & \hspace{-0.5em} + 2 k_{1,2} k_{1,5} k_{2,3} k_{3,2} k_{4,5}^{2} k_{5,1}^{2}
    + k_{1,2} k_{1,5} k_{2,3} k_{3,4}^{2} k_{4,5}^{2} k_{5,1}
    + k_{1,2} k_{1,5} k_{2,3} k_{3,4}^{2} k_{4,5} k_{5,1}^{2} \\
    & \hspace{-0.5em} + k_{1,2} k_{1,5} k_{2,3} k_{3,4}^{2} k_{4,5} k_{5,1} k_{5,4}
    + k_{1,2} k_{1,5} k_{2,3} k_{3,4} k_{4,3} k_{4,5} k_{5,1}^{2} \\
    & \hspace{-0.5em} + k_{1,2} k_{1,5} k_{2,3} k_{3,4} k_{4,3} k_{4,5} k_{5,1} k_{5,4}
    + k_{1,2} k_{1,5} k_{2,3} k_{3,4} k_{4,5}^{2} k_{5,1}^{2}
    + k_{1,2} k_{1,5} k_{3,2}^{2} k_{4,3}^{2} k_{5,1}^{2} \\
    & \hspace{-0.5em} + 3 k_{1,2} k_{1,5} k_{3,2}^{2} k_{4,3}^{2} k_{5,1} k_{5,4}
    + 2 k_{1,2} k_{1,5} k_{3,2}^{2} k_{4,3}^{2} k_{5,4}^{2}
    + 2 k_{1,2} k_{1,5} k_{3,2}^{2} k_{4,3} k_{4,5} k_{5,1}^{2} \\
    & \hspace{-0.5em} + 3 k_{1,2} k_{1,5} k_{3,2}^{2} k_{4,3} k_{4,5} k_{5,1} k_{5,4}
    + k_{1,2} k_{1,5} k_{3,2}^{2} k_{4,5}^{2} k_{5,1}^{2}
    + 2 k_{1,2} k_{1,5} k_{3,2} k_{3,4} k_{4,3} k_{4,5} k_{5,1}^{2} \\
    & \hspace{-0.5em} + 3 k_{1,2} k_{1,5} k_{3,2} k_{3,4} k_{4,3} k_{4,5} k_{5,1} k_{5,4}
    + 2 k_{1,2} k_{1,5} k_{3,2} k_{3,4} k_{4,5}^{2} k_{5,1}^{2}
    + k_{1,2} k_{1,5} k_{3,4}^{2} k_{4,5}^{2} k_{5,1}^{2} \\
    & \hspace{-0.5em} + k_{1,2} k_{2,1} k_{2,3} k_{3,2} k_{3,4} k_{4,3} k_{4,5} k_{5,1}
    + k_{1,2} k_{2,1} k_{2,3} k_{3,2} k_{3,4} k_{4,3} k_{4,5} k_{5,4} \\
    & \hspace{-0.5em} + k_{1,2} k_{2,1} k_{2,3} k_{3,2} k_{3,4} k_{4,3} k_{5,1}^{2}
    + 2 k_{1,2} k_{2,1} k_{2,3} k_{3,2} k_{3,4} k_{4,3} k_{5,1} k_{5,4} \\
    & \hspace{-0.5em} + k_{1,2} k_{2,1} k_{2,3} k_{3,2} k_{3,4} k_{4,3} k_{5,4}^{2}
    + k_{1,2} k_{2,1} k_{2,3} k_{3,2} k_{3,4} k_{4,5}^{2} k_{5,1}
    + k_{1,2} k_{2,1} k_{2,3} k_{3,2} k_{3,4} k_{4,5} k_{5,1}^{2} \\
    & \hspace{-0.5em} + k_{1,2} k_{2,1} k_{2,3} k_{3,2} k_{3,4} k_{4,5} k_{5,1} k_{5,4}
    + k_{1,2} k_{2,1} k_{2,3} k_{3,2} k_{4,3}^{2} k_{5,1}^{2} \\
    & \hspace{-0.5em} + 2 k_{1,2} k_{2,1} k_{2,3} k_{3,2} k_{4,3}^{2} k_{5,1} k_{5,4} 
    + k_{1,2} k_{2,1} k_{2,3} k_{3,2} k_{4,3}^{2} k_{5,4}^{2}
    + 2 k_{1,2} k_{2,1} k_{2,3} k_{3,2} k_{4,3} k_{4,5} k_{5,1}^{2} \\
    & \hspace{-0.5em} + 2 k_{1,2} k_{2,1} k_{2,3} k_{3,2} k_{4,3} k_{4,5} k_{5,1} k_{5,4}
    + k_{1,2} k_{2,1} k_{2,3} k_{3,2} k_{4,5}^{2} k_{5,1}^{2}
    + k_{1,2} k_{2,1} k_{2,3} k_{3,4}^{2} k_{4,5}^{2} k_{5,1} \\
    & \hspace{-0.5em} + k_{1,2} k_{2,1} k_{2,3} k_{3,4}^{2} k_{4,5} k_{5,1}^{2}
    + k_{1,2} k_{2,1} k_{2,3} k_{3,4}^{2} k_{4,5} k_{5,1} k_{5,4}
    + k_{1,2} k_{2,1} k_{2,3} k_{3,4} k_{4,3} k_{4,5} k_{5,1}^{2} \\
    & \hspace{-0.5em} + k_{1,2} k_{2,1} k_{2,3} k_{3,4} k_{4,3} k_{4,5} k_{5,1} k_{5,4}
    + k_{1,2} k_{2,1} k_{2,3} k_{3,4} k_{4,5}^{2} k_{5,1}^{2}
    + k_{1,2} k_{2,1} k_{3,2}^{2} k_{4,3}^{2} k_{5,1}^{2} \\
    & \hspace{-0.5em} + 2 k_{1,2} k_{2,1} k_{3,2}^{2} k_{4,3}^{2} k_{5,1} k_{5,4}
    + k_{1,2} k_{2,1} k_{3,2}^{2} k_{4,3}^{2} k_{5,4}^{2}
    + 2 k_{1,2} k_{2,1} k_{3,2}^{2} k_{4,3} k_{4,5} k_{5,1}^{2} \\
    & \hspace{-0.5em} + 2 k_{1,2} k_{2,1} k_{3,2}^{2} k_{4,3} k_{4,5} k_{5,1} k_{5,4}
    + k_{1,2} k_{2,1} k_{3,2}^{2} k_{4,5}^{2} k_{5,1}^{2}
    + 2 k_{1,2} k_{2,1} k_{3,2} k_{3,4} k_{4,3} k_{4,5} k_{5,1}^{2} \\
    & \hspace{-0.5em} + 2 k_{1,2} k_{2,1} k_{3,2} k_{3,4} k_{4,3} k_{4,5} k_{5,1} k_{5,4}
    + 2 k_{1,2} k_{2,1} k_{3,2} k_{3,4} k_{4,5}^{2} k_{5,1}^{2}
    + k_{1,2} k_{2,1} k_{3,4}^{2} k_{4,5}^{2} k_{5,1}^{2} \\
    & \hspace{-0.5em} + k_{1,2} k_{2,3}^{2} k_{3,4}^{2} k_{4,5}^{2} k_{5,1}
    + k_{1,2} k_{2,3}^{2} k_{3,4}^{2} k_{4,5} k_{5,1}^{2}
    + k_{1,2} k_{2,3}^{2} k_{3,4}^{2} k_{4,5} k_{5,1} k_{5,4} \\
    & \hspace{-0.5em} + k_{1,2} k_{2,3}^{2} k_{3,4} k_{4,3} k_{4,5} k_{5,1}^{2}
    + k_{1,2} k_{2,3}^{2} k_{3,4} k_{4,3} k_{4,5} k_{5,1} k_{5,4} 
    + k_{1,2} k_{2,3}^{2} k_{3,4} k_{4,5}^{2} k_{5,1}^{2} \\
    & \hspace{-0.5em} + k_{1,2} k_{2,3} k_{3,2} k_{3,4} k_{4,3} k_{4,5} k_{5,1}^{2}
    + k_{1,2} k_{2,3} k_{3,2} k_{3,4} k_{4,3} k_{4,5} k_{5,1} k_{5,4} \\
    & \hspace{-0.5em} + k_{1,2} k_{2,3} k_{3,2} k_{3,4} k_{4,5}^{2} k_{5,1}^{2}
    + k_{1,2} k_{2,3} k_{3,4}^{2} k_{4,5}^{2} k_{5,1}^{2}
    + k_{1,5}^{2} k_{2,1}^{2} k_{3,2}^{2} k_{4,3}^{2} \\
    & \hspace{-0.5em} + 2 k_{1,5}^{2} k_{2,1}^{2} k_{3,2}^{2} k_{4,3} k_{4,5}
    + k_{1,5}^{2} k_{2,1}^{2} k_{3,2}^{2} k_{4,3} k_{5,4}
    + k_{1,5}^{2} k_{2,1}^{2} k_{3,2}^{2} k_{4,5}^{2} \\
    & \hspace{-0.5em} + 2 k_{1,5}^{2} k_{2,1}^{2} k_{3,2}^{2} k_{4,5} k_{5,4}
    + k_{1,5}^{2} k_{2,1}^{2} k_{3,2}^{2} k_{5,1} k_{5,4}
    + k_{1,5}^{2} k_{2,1}^{2} k_{3,2}^{2} k_{5,4}^{2} \\
    & \hspace{-0.5em} + 2 k_{1,5}^{2} k_{2,1}^{2} k_{3,2} k_{3,4} k_{4,3} k_{4,5}
    + k_{1,5}^{2} k_{2,1}^{2} k_{3,2} k_{3,4} k_{4,3} k_{5,4}
    + 2 k_{1,5}^{2} k_{2,1}^{2} k_{3,2} k_{3,4} k_{4,5}^{2} \\
    & \hspace{-0.5em} + 4 k_{1,5}^{2} k_{2,1}^{2} k_{3,2} k_{3,4} k_{4,5} k_{5,4}
    + 2 k_{1,5}^{2} k_{2,1}^{2} k_{3,2} k_{3,4} k_{5,1} k_{5,4}
    + 2 k_{1,5}^{2} k_{2,1}^{2} k_{3,2} k_{3,4} k_{5,4}^{2} \\
    & \hspace{-0.5em} + k_{1,5}^{2} k_{2,1}^{2} k_{3,2} k_{4,3}^{2} k_{5,4}
    + k_{1,5}^{2} k_{2,1}^{2} k_{3,2} k_{4,3} k_{4,5} k_{5,4}
    + k_{1,5}^{2} k_{2,1}^{2} k_{3,2} k_{4,3} k_{5,1} k_{5,4} \\
    & \hspace{-0.5em} + k_{1,5}^{2} k_{2,1}^{2} k_{3,2} k_{4,3} k_{5,4}^{2}
    + k_{1,5}^{2} k_{2,1}^{2} k_{3,4}^{2} k_{4,5}^{2}
    + 2 k_{1,5}^{2} k_{2,1}^{2} k_{3,4}^{2} k_{4,5} k_{5,4} \\
    & \hspace{-0.5em} + k_{1,5}^{2} k_{2,1}^{2} k_{3,4}^{2} k_{5,1} k_{5,4}
    + k_{1,5}^{2} k_{2,1}^{2} k_{3,4}^{2} k_{5,4}^{2}
    + 2 k_{1,5}^{2} k_{2,1}^{2} k_{3,4} k_{4,3} k_{4,5} k_{5,4} \\
    & \hspace{-0.5em} + 2 k_{1,5}^{2} k_{2,1}^{2} k_{3,4} k_{4,3} k_{5,1} k_{5,4}
    + 2 k_{1,5}^{2} k_{2,1}^{2} k_{3,4} k_{4,3} k_{5,4}^{2}
    + k_{1,5}^{2} k_{2,1}^{2} k_{4,3}^{2} k_{5,1} k_{5,4} \\
    & \hspace{-0.5em} + k_{1,5}^{2} k_{2,1}^{2} k_{4,3}^{2} k_{5,4}^{2}
    + k_{1,5}^{2} k_{2,1}^{2} k_{4,3} k_{4,5} k_{5,1} k_{5,4}
    + 2 k_{1,5}^{2} k_{2,1} k_{2,3} k_{3,2} k_{3,4} k_{4,3} k_{4,5} \\
    & \hspace{-0.5em} + k_{1,5}^{2} k_{2,1} k_{2,3} k_{3,2} k_{3,4} k_{4,3} k_{5,4}
    + 2 k_{1,5}^{2} k_{2,1} k_{2,3} k_{3,2} k_{3,4} k_{4,5}^{2} \\
    & \hspace{-0.5em} + 4 k_{1,5}^{2} k_{2,1} k_{2,3} k_{3,2} k_{3,4} k_{4,5} k_{5,4}
    + 2 k_{1,5}^{2} k_{2,1} k_{2,3} k_{3,2} k_{3,4} k_{5,1} k_{5,4} \\
    & \hspace{-0.5em} + 2 k_{1,5}^{2} k_{2,1} k_{2,3} k_{3,2} k_{3,4} k_{5,4}^{2}
    + k_{1,5}^{2} k_{2,1} k_{2,3} k_{3,2} k_{4,3}^{2} k_{5,4}
    + k_{1,5}^{2} k_{2,1} k_{2,3} k_{3,2} k_{4,3} k_{4,5} k_{5,4} \\
    & \hspace{-0.5em} + k_{1,5}^{2} k_{2,1} k_{2,3} k_{3,2} k_{4,3} k_{5,1} k_{5,4}
    + k_{1,5}^{2} k_{2,1} k_{2,3} k_{3,2} k_{4,3} k_{5,4}^{2}
    + 2 k_{1,5}^{2} k_{2,1} k_{2,3} k_{3,4}^{2} k_{4,5}^{2} \\
    & \hspace{-0.5em} + 4 k_{1,5}^{2} k_{2,1} k_{2,3} k_{3,4}^{2} k_{4,5} k_{5,4}
    + 2 k_{1,5}^{2} k_{2,1} k_{2,3} k_{3,4}^{2} k_{5,1} k_{5,4}
    + 2 k_{1,5}^{2} k_{2,1} k_{2,3} k_{3,4}^{2} k_{5,4}^{2} \\
    & \hspace{-0.5em} + 4 k_{1,5}^{2} k_{2,1} k_{2,3} k_{3,4} k_{4,3} k_{4,5} k_{5,4}
    + 4 k_{1,5}^{2} k_{2,1} k_{2,3} k_{3,4} k_{4,3} k_{5,1} k_{5,4} \\
    & \hspace{-0.5em} + 4 k_{1,5}^{2} k_{2,1} k_{2,3} k_{3,4} k_{4,3} k_{5,4}^{2}
    + 2 k_{1,5}^{2} k_{2,1} k_{2,3} k_{4,3}^{2} k_{5,1} k_{5,4}
    + 2 k_{1,5}^{2} k_{2,1} k_{2,3} k_{4,3}^{2} k_{5,4}^{2} \\
    & \hspace{-0.5em} + 2 k_{1,5}^{2} k_{2,1} k_{2,3} k_{4,3} k_{4,5} k_{5,1} k_{5,4}
    + k_{1,5}^{2} k_{2,1} k_{3,2}^{2} k_{4,3}^{2} k_{5,4}
    + k_{1,5}^{2} k_{2,1} k_{3,2}^{2} k_{4,3} k_{4,5} k_{5,4} \\
    & \hspace{-0.5em} + k_{1,5}^{2} k_{2,1} k_{3,2}^{2} k_{4,3} k_{5,1} k_{5,4}
    + k_{1,5}^{2} k_{2,1} k_{3,2}^{2} k_{4,3} k_{5,4}^{2}
    + k_{1,5}^{2} k_{2,1} k_{3,2} k_{3,4} k_{4,3} k_{4,5} k_{5,4} \\
    & \hspace{-0.5em} + k_{1,5}^{2} k_{2,1} k_{3,2} k_{3,4} k_{4,3} k_{5,1} k_{5,4}
    + k_{1,5}^{2} k_{2,1} k_{3,2} k_{3,4} k_{4,3} k_{5,4}^{2}
    + k_{1,5}^{2} k_{2,1} k_{3,2} k_{4,3}^{2} k_{5,1} k_{5,4} \\
    & \hspace{-0.5em} + k_{1,5}^{2} k_{2,1} k_{3,2} k_{4,3}^{2} k_{5,4}^{2}
    + k_{1,5}^{2} k_{2,1} k_{3,2} k_{4,3} k_{4,5} k_{5,1} k_{5,4}
    + k_{1,5}^{2} k_{2,3}^{2} k_{3,4}^{2} k_{4,5}^{2} \\
    & \hspace{-0.5em} + 2 k_{1,5}^{2} k_{2,3}^{2} k_{3,4}^{2} k_{4,5} k_{5,4}
    + k_{1,5}^{2} k_{2,3}^{2} k_{3,4}^{2} k_{5,1} k_{5,4}
    + k_{1,5}^{2} k_{2,3}^{2} k_{3,4}^{2} k_{5,4}^{2} \\
    & \hspace{-0.5em} + 2 k_{1,5}^{2} k_{2,3}^{2} k_{3,4} k_{4,3} k_{4,5} k_{5,4}
    + 2 k_{1,5}^{2} k_{2,3}^{2} k_{3,4} k_{4,3} k_{5,1} k_{5,4}
    + 2 k_{1,5}^{2} k_{2,3}^{2} k_{3,4} k_{4,3} k_{5,4}^{2} \\
    & \hspace{-0.5em} + k_{1,5}^{2} k_{2,3}^{2} k_{4,3}^{2} k_{5,1} k_{5,4}
    + k_{1,5}^{2} k_{2,3}^{2} k_{4,3}^{2} k_{5,4}^{2}
    + k_{1,5}^{2} k_{2,3}^{2} k_{4,3} k_{4,5} k_{5,1} k_{5,4} \\
    & \hspace{-0.5em} + 2 k_{1,5}^{2} k_{2,3} k_{3,2} k_{3,4} k_{4,3} k_{4,5} k_{5,4}
    + 2 k_{1,5}^{2} k_{2,3} k_{3,2} k_{3,4} k_{4,3} k_{5,1} k_{5,4} \\ 
    & \hspace{-0.5em} + 2 k_{1,5}^{2} k_{2,3} k_{3,2} k_{3,4} k_{4,3} k_{5,4}^{2}
    + 2 k_{1,5}^{2} k_{2,3} k_{3,2} k_{4,3}^{2} k_{5,1} k_{5,4}
    + 2 k_{1,5}^{2} k_{2,3} k_{3,2} k_{4,3}^{2} k_{5,4}^{2} \\
    & \hspace{-0.5em} + 2 k_{1,5}^{2} k_{2,3} k_{3,2} k_{4,3} k_{4,5} k_{5,1} k_{5,4}
    + k_{1,5}^{2} k_{3,2}^{2} k_{4,3}^{2} k_{5,1} k_{5,4}
    + k_{1,5}^{2} k_{3,2}^{2} k_{4,3}^{2} k_{5,4}^{2} \\
    & \hspace{-0.5em} + k_{1,5}^{2} k_{3,2}^{2} k_{4,3} k_{4,5} k_{5,1} k_{5,4}
    + k_{1,5}^{2} k_{3,2} k_{3,4} k_{4,3} k_{4,5} k_{5,1} k_{5,4}
    + k_{1,5} k_{2,1}^{2} k_{3,2}^{2} k_{4,3}^{2} k_{5,1} \\
    & \hspace{-0.5em} + k_{1,5} k_{2,1}^{2} k_{3,2}^{2} k_{4,3}^{2} k_{5,4}
    + 2 k_{1,5} k_{2,1}^{2} k_{3,2}^{2} k_{4,3} k_{4,5} k_{5,1}
    + k_{1,5} k_{2,1}^{2} k_{3,2}^{2} k_{4,3} k_{4,5} k_{5,4} \\
    & \hspace{-0.5em} + k_{1,5} k_{2,1}^{2} k_{3,2}^{2} k_{4,3} k_{5,1} k_{5,4} 
    + k_{1,5} k_{2,1}^{2} k_{3,2}^{2} k_{4,3} k_{5,4}^{2}
    + k_{1,5} k_{2,1}^{2} k_{3,2}^{2} k_{4,5}^{2} k_{5,1} \\
    & \hspace{-0.5em} + k_{1,5} k_{2,1}^{2} k_{3,2}^{2} k_{4,5} k_{5,1} k_{5,4} 
    + 2 k_{1,5} k_{2,1}^{2} k_{3,2} k_{3,4} k_{4,3} k_{4,5} k_{5,1} 
    + k_{1,5} k_{2,1}^{2} k_{3,2} k_{3,4} k_{4,3} k_{4,5} k_{5,4} \\
    & \hspace{-0.5em} + k_{1,5} k_{2,1}^{2} k_{3,2} k_{3,4} k_{4,3} k_{5,1} k_{5,4}
    + k_{1,5} k_{2,1}^{2} k_{3,2} k_{3,4} k_{4,3} k_{5,4}^{2}
    + 2 k_{1,5} k_{2,1}^{2} k_{3,2} k_{3,4} k_{4,5}^{2} k_{5,1} \\
    & \hspace{-0.5em} + 2 k_{1,5} k_{2,1}^{2} k_{3,2} k_{3,4} k_{4,5} k_{5,1} k_{5,4}
    + k_{1,5} k_{2,1}^{2} k_{3,2} k_{4,3}^{2} k_{5,1} k_{5,4}
    + k_{1,5} k_{2,1}^{2} k_{3,2} k_{4,3}^{2} k_{5,4}^{2} \\
    & \hspace{-0.5em} + k_{1,5} k_{2,1}^{2} k_{3,2} k_{4,3} k_{4,5} k_{5,1} k_{5,4}
    + k_{1,5} k_{2,1}^{2} k_{3,4}^{2} k_{4,5}^{2} k_{5,1}
    + k_{1,5} k_{2,1}^{2} k_{3,4}^{2} k_{4,5} k_{5,1} k_{5,4} \\
    & \hspace{-0.5em} + k_{1,5} k_{2,1}^{2} k_{3,4} k_{4,3} k_{4,5} k_{5,1} k_{5,4}
    + 2 k_{1,5} k_{2,1} k_{2,3} k_{3,2} k_{3,4} k_{4,3} k_{4,5} k_{5,1} \\
    & \hspace{-0.5em} + k_{1,5} k_{2,1} k_{2,3} k_{3,2} k_{3,4} k_{4,3} k_{4,5} k_{5,4}
    + k_{1,5} k_{2,1} k_{2,3} k_{3,2} k_{3,4} k_{4,3} k_{5,1} k_{5,4} \\
    & \hspace{-0.5em} + k_{1,5} k_{2,1} k_{2,3} k_{3,2} k_{3,4} k_{4,3} k_{5,4}^{2}
    + 2 k_{1,5} k_{2,1} k_{2,3} k_{3,2} k_{3,4} k_{4,5}^{2} k_{5,1} \\
    & \hspace{-0.5em} + 2 k_{1,5} k_{2,1} k_{2,3} k_{3,2} k_{3,4} k_{4,5} k_{5,1} k_{5,4}
    + k_{1,5} k_{2,1} k_{2,3} k_{3,2} k_{4,3}^{2} k_{5,1} k_{5,4} \\
    & \hspace{-0.5em} + k_{1,5} k_{2,1} k_{2,3} k_{3,2} k_{4,3}^{2} k_{5,4}^{2}
    + k_{1,5} k_{2,1} k_{2,3} k_{3,2} k_{4,3} k_{4,5} k_{5,1} k_{5,4}
    + 2 k_{1,5} k_{2,1} k_{2,3} k_{3,4}^{2} k_{4,5}^{2} k_{5,1} \\
    & \hspace{-0.5em} + 2 k_{1,5} k_{2,1} k_{2,3} k_{3,4}^{2} k_{4,5} k_{5,1} k_{5,4}
    + 2 k_{1,5} k_{2,1} k_{2,3} k_{3,4} k_{4,3} k_{4,5} k_{5,1} k_{5,4} \\
    & \hspace{-0.5em} + k_{1,5} k_{2,1} k_{3,2}^{2} k_{4,3}^{2} k_{5,1} k_{5,4}
    + k_{1,5} k_{2,1} k_{3,2}^{2} k_{4,3}^{2} k_{5,4}^{2}
    + k_{1,5} k_{2,1} k_{3,2}^{2} k_{4,3} k_{4,5} k_{5,1} k_{5,4} \\
    & \hspace{-0.5em} + k_{1,5} k_{2,1} k_{3,2} k_{3,4} k_{4,3} k_{4,5} k_{5,1} k_{5,4} + k_{1,5} k_{2,3}^{2} k_{3,4}^{2} k_{4,5}^{2} k_{5,1} + k_{1,5} k_{2,3}^{2} k_{3,4}^{2} k_{4,5} k_{5,1} k_{5,4} \\
    & \hspace{-0.5em} + k_{1,5} k_{2,3}^{2} k_{3,4} k_{4,3} k_{4,5} k_{5,1} k_{5,4} + k_{1,5} k_{2,3} k_{3,2} k_{3,4} k_{4,3} k_{4,5} k_{5,1} k_{5,4} + k_{2,1}^{2} k_{3,2}^{2} k_{4,3}^{2} k_{5,1}^{2} \\
    & \hspace{-0.5em} + 2 k_{2,1}^{2} k_{3,2}^{2} k_{4,3}^{2} k_{5,1} k_{5,4} + k_{2,1}^{2} k_{3,2}^{2} k_{4,3}^{2} k_{5,4}^{2} + 2 k_{2,1}^{2} k_{3,2}^{2} k_{4,3} k_{4,5} k_{5,1}^{2} \\
    & \hspace{-0.5em} + 2 k_{2,1}^{2} k_{3,2}^{2} k_{4,3} k_{4,5} k_{5,1} k_{5,4} + k_{2,1}^{2} k_{3,2}^{2} k_{4,5}^{2} k_{5,1}^{2} + 2 k_{2,1}^{2} k_{3,2} k_{3,4} k_{4,3} k_{4,5} k_{5,1}^{2} \\
    & \hspace{-0.5em} + 2 k_{2,1}^{2} k_{3,2} k_{3,4} k_{4,3} k_{4,5} k_{5,1} k_{5,4} + 2 k_{2,1}^{2} k_{3,2} k_{3,4} k_{4,5}^{2} k_{5,1}^{2} + k_{2,1}^{2} k_{3,4}^{2} k_{4,5}^{2} k_{5,1}^{2} \\
    & \hspace{-0.5em} + 2 k_{2,1} k_{2,3} k_{3,2} k_{3,4} k_{4,3} k_{4,5} k_{5,1}^{2}
    + 2 k_{2,1} k_{2,3} k_{3,2} k_{3,4} k_{4,3} k_{4,5} k_{5,1} k_{5,4} \\
    & \hspace{-0.5em} + 2 k_{2,1} k_{2,3} k_{3,2} k_{3,4} k_{4,5}^{2} k_{5,1}^{2}
    + 2 k_{2,1} k_{2,3} k_{3,4}^{2} k_{4,5}^{2} k_{5,1}^{2}
    + k_{2,3}^{2} k_{3,4}^{2} k_{4,5}^{2} k_{5,1}^{2}
\end{align*}
and 
\begin{align*}
    Q & = k_{1,2}^{2} k_{2,1}^{2} k_{3,2}^{2} k_{4,3}^{2} k_{5,1}^{2} + 2 k_{1,2}^{2} k_{2,1}^{2} k_{3,2}^{2} k_{4,3}^{2} k_{5,1} k_{5,4} + k_{1,2}^{2} k_{2,1}^{2} k_{3,2}^{2} k_{4,3}^{2} k_{5,4}^{2} \\
    & \hspace{-0.5em} + 2 k_{1,2}^{2} k_{2,1}^{2} k_{3,2}^{2} k_{4,3} k_{4,5} k_{5,1}^{2} + 2 k_{1,2}^{2} k_{2,1}^{2} k_{3,2}^{2} k_{4,3} k_{4,5} k_{5,1} k_{5,4} + k_{1,2}^{2} k_{2,1}^{2} k_{3,2}^{2} k_{4,5}^{2} k_{5,1}^{2} \\
    & \hspace{-0.5em} + 2 k_{1,2}^{2} k_{2,1}^{2} k_{3,2} k_{3,4} k_{4,3} k_{4,5} k_{5,1}^{2} + 2 k_{1,2}^{2} k_{2,1}^{2} k_{3,2} k_{3,4} k_{4,3} k_{4,5} k_{5,1} k_{5,4} \\
    & \hspace{-0.5em} + 2 k_{1,2}^{2} k_{2,1}^{2} k_{3,2} k_{3,4} k_{4,5}^{2} k_{5,1}^{2} + k_{1,2}^{2} k_{2,1}^{2} k_{3,4}^{2} k_{4,5}^{2} k_{5,1}^{2} + 2 k_{1,2}^{2} k_{2,1} k_{2,3} k_{3,2} k_{3,4} k_{4,3} k_{4,5} k_{5,1}^{2} \\
    & \hspace{-0.5em} + 2 k_{1,2}^{2} k_{2,1} k_{2,3} k_{3,2} k_{3,4} k_{4,3} k_{4,5} k_{5,1} k_{5,4} + 2 k_{1,2}^{2} k_{2,1} k_{2,3} k_{3,2} k_{3,4} k_{4,5}^{2} k_{5,1}^{2} \\
    & \hspace{-0.5em} + 2 k_{1,2}^{2} k_{2,1} k_{2,3} k_{3,4}^{2} k_{4,5}^{2} k_{5,1}^{2} + k_{1,2}^{2} k_{2,3}^{2} k_{3,4}^{2} k_{4,5}^{2} k_{5,1}^{2} + 2 k_{1,2} k_{1,5} k_{2,1}^{2} k_{3,2}^{2} k_{4,3}^{2} k_{5,1}^{2} \\
    & \hspace{-0.5em} + 4 k_{1,2} k_{1,5} k_{2,1}^{2} k_{3,2}^{2} k_{4,3}^{2} k_{5,1} k_{5,4} + 2 k_{1,2} k_{1,5} k_{2,1}^{2} k_{3,2}^{2} k_{4,3}^{2} k_{5,4}^{2} \\
    & \hspace{-0.5em} + 4 k_{1,2} k_{1,5} k_{2,1}^{2} k_{3,2}^{2} k_{4,3} k_{4,5} k_{5,1}^{2} + 4 k_{1,2} k_{1,5} k_{2,1}^{2} k_{3,2}^{2} k_{4,3} k_{4,5} k_{5,1} k_{5,4} \\
    & \hspace{-0.5em} + 2 k_{1,2} k_{1,5} k_{2,1}^{2} k_{3,2}^{2} k_{4,5}^{2} k_{5,1}^{2} + 4 k_{1,2} k_{1,5} k_{2,1}^{2} k_{3,2} k_{3,4} k_{4,3} k_{4,5} k_{5,1}^{2} \\
    & \hspace{-0.5em} + 4 k_{1,2} k_{1,5} k_{2,1}^{2} k_{3,2} k_{3,4} k_{4,3} k_{4,5} k_{5,1} k_{5,4} + 4 k_{1,2} k_{1,5} k_{2,1}^{2} k_{3,2} k_{3,4} k_{4,5}^{2} k_{5,1}^{2} \\
    & \hspace{-0.5em} + 2 k_{1,2} k_{1,5} k_{2,1}^{2} k_{3,4}^{2} k_{4,5}^{2} k_{5,1}^{2} + 4 k_{1,2} k_{1,5} k_{2,1} k_{2,3} k_{3,2} k_{3,4} k_{4,3} k_{4,5} k_{5,1}^{2} \\
    & \hspace{-0.5em} + 4 k_{1,2} k_{1,5} k_{2,1} k_{2,3} k_{3,2} k_{3,4} k_{4,3} k_{4,5} k_{5,1} k_{5,4} + 4 k_{1,2} k_{1,5} k_{2,1} k_{2,3} k_{3,2} k_{3,4} k_{4,5}^{2} k_{5,1}^{2} \\
    & \hspace{-0.5em} + 4 k_{1,2} k_{1,5} k_{2,1} k_{2,3} k_{3,4}^{2} k_{4,5}^{2} k_{5,1}^{2} + 2 k_{1,2} k_{1,5} k_{2,3}^{2} k_{3,4}^{2} k_{4,5}^{2} k_{5,1}^{2} + k_{1,5}^{2} k_{2,1}^{2} k_{3,2}^{2} k_{4,3}^{2} k_{5,1}^{2} \\
    & \hspace{-0.5em} + 2 k_{1,5}^{2} k_{2,1}^{2} k_{3,2}^{2} k_{4,3}^{2} k_{5,1} k_{5,4} + k_{1,5}^{2} k_{2,1}^{2} k_{3,2}^{2} k_{4,3}^{2} k_{5,4}^{2} + 2 k_{1,5}^{2} k_{2,1}^{2} k_{3,2}^{2} k_{4,3} k_{4,5} k_{5,1}^{2} \\
    & \hspace{-0.5em} + 2 k_{1,5}^{2} k_{2,1}^{2} k_{3,2}^{2} k_{4,3} k_{4,5} k_{5,1} k_{5,4} + k_{1,5}^{2} k_{2,1}^{2} k_{3,2}^{2} k_{4,5}^{2} k_{5,1}^{2} + 2 k_{1,5}^{2} k_{2,1}^{2} k_{3,2} k_{3,4} k_{4,3} k_{4,5} k_{5,1}^{2} \\
    & \hspace{-0.5em} + 2 k_{1,5}^{2} k_{2,1}^{2} k_{3,2} k_{3,4} k_{4,3} k_{4,5} k_{5,1} k_{5,4} + 2 k_{1,5}^{2} k_{2,1}^{2} k_{3,2} k_{3,4} k_{4,5}^{2} k_{5,1}^{2} + k_{1,5}^{2} k_{2,1}^{2} k_{3,4}^{2} k_{4,5}^{2} k_{5,1}^{2} \\
    & \hspace{-0.5em} + 2 k_{1,5}^{2} k_{2,1} k_{2,3} k_{3,2} k_{3,4} k_{4,3} k_{4,5} k_{5,1}^{2} + 2 k_{1,5}^{2} k_{2,1} k_{2,3} k_{3,2} k_{3,4} k_{4,3} k_{4,5} k_{5,1} k_{5,4} \\
    & \hspace{-0.5em} + 2 k_{1,5}^{2} k_{2,1} k_{2,3} k_{3,2} k_{3,4} k_{4,5}^{2} k_{5,1}^{2} + 2 k_{1,5}^{2} k_{2,1} k_{2,3} k_{3,4}^{2} k_{4,5}^{2} k_{5,1}^{2} + k_{1,5}^{2} k_{2,3}^{2} k_{3,4}^{2} k_{4,5}^{2} k_{5,1}^{2}.
\end{align*}

As noted in Eqn.~\ref{e-trug}, the rational function for $\tau^{(1)}_1$ has total degree $4$ in the numerator and $5$ in the denominator, while that for $\tau^{(2)}_1$ has total degree $8$ in the numerator and $10$ in the denominator.

\clearpage
\section{Summary of the notation used in the paper}
\label{app:notation}

\begin{table}[!ht]
\centering
\begin{tabularx}{\textwidth}{cX}
    \hline
    \multicolumn{2}{c}{\emph{General mathematics}} \\
    \hline 
    $\0$, $\1$ & Zero or all-ones matrix or vector \\
    $\Ib$ & Identity matrix \\
    $\# A$ & Size of the set $A$ \\
    $\Mb_{[A,B]}$ & Submatrix of $\Mb$ containing the rows in $A$ and columns in $B$ \\
    $\vb_{[A]}$ & Vector of entries in $\vb$ indexed by $A$ \\
    $\theta_S : \left\{ 1, \dotsc, k \right\} \to S$ & Ordering bijection on $S = \left\{ s_1 < \dotsb < s_k \right\}$, $\theta_S(i) = s_i$ for $i = 1, \dotsc, k$ \\
    \hline
    \multicolumn{2}{c}{\emph{Graphs}} \\
    \hline
    $G$ & Graph \\
    $\Vs(G)$ & Vertices of $G$, taken to be $\left\{ 1, \dotsc, N \right\}$ unless otherwise specified \\
    $N$ & Number of vertices in $G$, unless otherwise specified \\
    $\Es(G)$ & Edges of $G$ \\
    $i \to j$, $i \to_G j$ & Edge from vertex $i$ to vertex $j$ in $G$ \\
    $\ell(i \to j)$, $\ell(i \to_G j)$ & Label on edge $i \to j$ in $G$, with units of $(\text{time})^{-1}$ \\
    $\Abar$ & Complement of the vertex subset $A \subseteq \Vs(G)$ \\
    $\Ls(G)$ & Laplacian matrix of $G$ (Eqn.~\ref{eq:lap}) \\
    $\Lb(G)$ & $-\Ls(G)^\T$ (Eqn.~\ref{eq:lap-t}) \\
    $\lambda_i(G)$ & Sum of outgoing edge labels from $i$ \\
    $i \tos j$, $i \tos_G j$ & Existence of a directed path of edges from $i$ to $j$ in $G$ \\
    $\Pp(i)$ & Vertices to which there is a directed path of edges from $i$, $\left\{ j \in \Vs(G) : i \tos j \right\}$ \\
    $w(\cdot)$ & The weight, or product of edge labels, of a subgraph, or the sum of those for a set of subgraphs \\
    $Z$ & If $G$ is a graph with terminal vertices (singleton terminal SCCs), the set of all such terminal vertices \\
    $\lambda_{i,Z}$ & If $G$ is a graph with terminal vertices, $Z \subseteq \Vs(G)$, and $i \in \Zbar$, the sum of the outgoing edge labels $i \to_G z$ for all $z \in Z$ \\
    \hline
    \multicolumn{2}{c}{\emph{Spanning trees and forests}} \\
    \hline
    $\Phi_A(G)$ & Set of spanning forests of $G$ rooted at $A \subseteq \Vs(G)$ \\
    $\Phi_{B \to A}(G)$ & Assuming $\# A = \# B$, the set of spanning forests of $G$ rooted at $A \subseteq \Vs(G)$ in which each $b \in B \subseteq \Vs(G)$ has a path to a distinct root in $A$ \\
    $\Qb^{(k)}(G)$ & The $k$-th Chebotarev--Agaev matrix of $G$ (Eqn.~\ref{eq:qij}) \\
    \hline
    \multicolumn{2}{c}{\emph{Graph operators}} \\
    \hline
    $\Hs_u[\cdot]$ & Hill operator (Def.~\ref{def:hill} and Fig.~\ref{fig:hill}) \\
    $\Us_v[\cdot]$ & Unravelling operator (Def.~\ref{def:unravel} and Fig.~\ref{fig:unravel}) \\
    $\lambdab$ & If $G$ is a graph with $T$ terminal vertices, $Z \subseteq \Vs(G)$, the $(N-T) \times 1$ vector in which the $j$-th entry is $\lambda_{\theta_{\Zbar}(j),Z}$ (Eqn.~\ref{e-lmbt}) \\
    $\deltab_u$ & If $G$ is a graph with $T$ terminal vertices, $Z \subseteq \Vs(G)$, and $u \in \Zbar$, the $(N-T) \times 1$ vector in which the $\theta_{\Zbar}^{-1}(u)$-th entry is $1$ and all other entries are zero (Eqn.~\ref{e-dltb}) \\
    $\psi_{k \mid v,u}$ & Exchange factor, $w(\Phi_{\{k\}}(\Hs_v[G])) / w(\Phi_{\{k\}}(\Hs_u[G]))$ (Def.~\ref{lem:exchange-def} and Lemma \ref{lem:exchange}) \\
    \hline
    \multicolumn{2}{c}{\emph{Markov processes}} \\
    \hline
    $\Xmp$ & Markov process associated with $G$ \\
    $\xb^*(G)$ & Steady-state probability vector of $\Xmp$ \\
    \hline
    \multicolumn{2}{c}{\emph{Splitting probabilities and FPTs (let $G$ be a graph with $T \geq 1$ terminal vertices, $Z \subseteq \Vs(G)$)}} \\
    \hline
    $\pi_{u,z}(G)$ & Splitting probability from $u$ to $z \in Z$ (Eqn.~\ref{eq:split} and Theorem \ref{thm:hill-split}) \\
    $\mu_{c,u,z}^{(r)}(G)$ & $r$-th moment of conditional FPT from $u$ to $z \in Z$ (Eqn.~\ref{eq:moment}, Theorem \ref{thm:hill-moment}) \\
    $\mu_{u,Z}^{(r)}(G)$ & $r$-th moment of unconditional FPT from $u$ to any terminal vertex (Eqn.~\ref{eq:moment-Z}, Theorem \ref{thm:hill-moment-Z}) \\
    \hline
    \multicolumn{2}{c}{\emph{Recurrence times (let $G$ be a strongly connected graph)}} \\
    \hline
    $\tau_u^{(r)}(G)$ & $r$-th moment of recurrence time to $u$ (Eqn.~\ref{eq:recur}) \\
    \hline
\end{tabularx}
\caption{Summary of the notation used in the paper.}
\label{table:notation}
\end{table}

\clearpage

\backmatter

\section*{Acknowledgements}

We are grateful to members of the Gunawardena lab for their comments and feedback.  K-MN and JG were supported by the US National Institutes of Health award R01GM122928.

\section*{Data availability}

This paper has no associated data.



\end{document}